\definecolor{refkey}{rgb}{1,0,0}
\definecolor{labelkey}{rgb}{0,0,1}
\theoremstyle{plain}
  \newtheorem{thm}{Theorem}[section]
  \newtheorem{lem}[thm]{Lemma}
  \newtheorem{cor}[thm]{Corollary}
  \newtheorem{prop}[thm]{Proposition}
  \newtheorem{conj}[thm]{Conjecture}
  \newtheorem*{conj*}{Conjecture}
  \newtheorem*{obs*}{Observation}
\theoremstyle{definition}
  \newtheorem{defn}[thm]{Definition}
\theoremstyle{remark}
  \newtheorem{rem}[thm]{Remark}
\newcommand{\N}{\mathbb{N}}
\newcommand{\Z}{\mathbb{Z}}
\newcommand{\C}{\mathbb{C}}
\newcommand{\R}{\mathbb{R}}
\newcommand{\Q}{\mathbb{Q}}
\newcommand{\Vol}{\operatorname{Vol}}
\newcommand{\CS}{\operatorname{CS}}
\newcommand{\CV}{\operatorname{CV}}
\newcommand{\Li}{\operatorname{Li}}
\renewcommand{\L}{\mathcal{L}}
\renewcommand{\sl}{\mathfrak{sl}}
\newcommand{\SL}{\mathrm{SL}}
\newcommand{\arccosh}{\operatorname{arccosh}}
\newcommand{\floor}[1]{\lfloor#1\rfloor}
\newcommand{\pic}[2]{\raisebox{-0.5\height}{\includegraphics[scale=#1]{#2.eps}}}
\renewcommand{\Re}{\operatorname{Re}}
\renewcommand{\Im}{\operatorname{Im}}
\renewcommand{\i}{\sqrt{-1}}
\newcommand{\oset}[3][0ex]{%
  \mathrel{\mathop{#3}\limits^{
    \vbox to#1{\kern-1\ex@
    \hbox{$\scriptstyle#2$}\vss}}}}
\newcommand{\Rpath}{\oset{\frown}{\mathbb{R}}}
\numberwithin{equation}{section}
\begin{document}
\title[The colored Jones polynomial of the figure-eight knot]
{The colored Jones polynomial \\ of the figure-eight knot \\ and a quantum modularity}
\author{Hitoshi Murakami}
\address{
Graduate School of Information Sciences,
Tohoku University,
Aramaki-aza-Aoba 6-3-09, Aoba-ku,
Sendai 980-8579, Japan}
\email{hitoshi@tohoku.ac.jp}
\date{\today}
\begin{abstract}
We study the asymptotic behavior of the $N$-dimensional colored Jones polynomial of the figure-eight knot evaluated at $\exp\bigl((u+2p\pi\i)/N\bigr)$, where $u$ is a small real number and $p$ is a positive integer.
We show that it is asymptotically equivalent to the product of the $p$-dimensional colored Jones polynomial evaluated at $\exp\bigl(4N\pi^2/(u+2p\pi\i)\bigr)$ and a term that grows exponentially with growth rate determined by the Chern--Simons invariant.
This indicates a quantum modularity of the colored Jones polynomial.
\end{abstract}
\keywords{colored Jones polynomial, volume conjecture, figure-eight knot, Chern--Simons invariant, Reidemeister torsion, quantum modularity}
\subjclass{Primary 57K14 57K10 57K16}
\thanks{This work was supported by JSPS KAKENHI Grant Numbers JP22H01117, JP20K03601, JP20K03931.}
\maketitle
\section{Introduction}\label{sec:introduction}
Let $K$ be an oriented knot in the three-sphere $S^3$.
For a positive integer $N$, we denote by $J_N(K;q)$ the colored Jones polynomial associated with the irreducible $N$-dimensional representation of the Lie algebra $\mathfrak{sl}(2;\C)$.
Here we normalize $J_N(K;q)$ so that $J_N(U;q)=1$ for the unknot $U$.
\par
Let us consider an evaluation $J_N\left(K;e^{2\pi\i/N}\right)$.
It is well known that it coincides with Kashaev's invariant $\langle K\rangle_N$ \cite{Kashaev:MODPLA95,Murakami/Murakami:ACTAM12001}.
R.~Kashaev conjectured that his invariant grows exponentially as $N\to\infty$, and that its growth rate gives the hyperbolic volume of the knot complement when $K$ is a hyperbolic knot, that is, $S^3\setminus{K}$ possesses a (unique) complete hyperbolic structure with finite volume \cite{Kashaev:LETMP97}.
In \cite{Murakami/Murakami:ACTAM12001}, Kashaev's conjecture was generalized to any knot replacing the hyperbolic volume with simplicial volume (also known as Gromov's norm \cite{Gromov:INSHE82}).
\begin{conj}[Volume conjecture]
Let $K\subset S^3$ be any knot.
Then we have
\begin{equation*}
  \lim_{N\to\infty}
  \frac{1}{N}\log\bigl|J_N(K;e^{2\pi\i/N})\bigr|
  =
  \frac{1}{2\pi}\Vol(S^3\setminus{K}),
\end{equation*}
where $\Vol(S^3\setminus{K})$ is the simplicial volume of $S^3\setminus{K}$.
\end{conj}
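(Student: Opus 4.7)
The plan is to reduce the evaluation $J_N(K;e^{2\pi\i/N})$ to a tractable state sum, extract its large-$N$ asymptotics by a saddle point analysis, and identify the dominant critical value geometrically with the hyperbolic volume of $S^3\setminus K$, handling non-hyperbolic pieces via the JSJ decomposition.

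First I would write $J_N(K;e^{2\pi\i/N})$ as a finite multisum over states on a fixed diagram of $K$, using the $R$-matrix for the $N$-dimensional irreducible representation of $U_q(\sl(2;\C))$ together with the identification with Kashaev's invariant. The summand is a product of quotients of quantum factorials $(q)_k=\prod_{j=1}^{k}(1-q^j)$. Setting $x_j=k_j/N$ for each summation index and using the classical Euler--Maclaurin asymptotics of $\log(q)_k$ in terms of the dilogarithm $\Li_2$, the summand takes the form $\exp\bigl(N\Phi(x_1,\dots,x_n)+O(\log N)\bigr)$ for an explicit potential $\Phi$ assembled from Rogers dilogarithms.

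Second I would approximate the sum by a contour integral and apply the saddle point method. The critical point equations $\partial\Phi/\partial x_j=0$, after exponentiating, are precisely Thurston's gluing and completeness equations for an ideal triangulation of $S^3\setminus K$ read off from the diagram (following Yokota), so the geometric saddle corresponds to the complete hyperbolic structure. At that saddle, a computation with the Bloch--Wigner dilogarithm shows that $\Im\Phi$ equals $\Vol(S^3\setminus K)/(2\pi)$, yielding the desired growth rate. For non-hyperbolic prime knots (torus knots) the colored Jones polynomial has an explicit closed form with polynomial growth, matching the vanishing of simplicial volume. For satellite and composite knots, I would combine the pieces using connect-sum and cabling formulas and induct on the JSJ decomposition, invoking additivity of simplicial volume.

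The main obstacle is the analytic step. One must rigorously justify the passage from sum to contour integral, deform the contour through the geometric saddle without crossing poles of the $(q)_k$-factors, and, most delicately, show that all other critical points of $\Phi$ contribute subdominantly. Even for the figure-eight knot this requires a careful steepest-descent argument (carried out in this paper and its precursors), and making the argument uniform over arbitrary diagrams, combined with the satellite/JSJ reduction, is precisely what keeps the general conjecture open.
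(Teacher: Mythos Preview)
The statement you are addressing is a \emph{conjecture}, not a theorem; the paper does not prove it and does not claim to. It is quoted in the introduction as background (the Volume Conjecture of Kashaev, Murakami--Murakami), with a list of the special cases for which it is known. There is therefore no ``paper's own proof'' to compare your proposal against.

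Your write-up is not a proof but a sketch of the standard heuristic program (state-sum, dilogarithm potential, saddle point, Yokota's identification of the critical equations with gluing equations, JSJ reduction). You yourself identify the genuine gaps: rigorous contour deformation past the quantum-factorial poles, and---crucially---showing that the geometric saddle dominates all other critical points, uniformly in the diagram. These are exactly the steps that remain open in general; for the figure-eight knot alone, the present paper and its predecessors devote substantial effort to the analogous analysis for a \emph{single} summation variable and a deformed evaluation point $e^{(u+2p\pi\i)/N}$. So what you have written is an accurate summary of why the conjecture is plausible and why it is hard, but it is not a proof, and you should not present it as one.
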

\par
So far, Kashaev's conjecture is proved for the figure-eight knot by T.~Ekholm, and for knots with up to seven crossings \cite{Ohtsuki:QT2016,Ohtsuki/Yokota:MATPC2018,Ohtsuki:INTJM62017}.
The volume conjecture is proved for hyperbolic knots with up to seven crossings as above, for all the torus knots by Kashaev and O.~Tirkkonen \cite{Kashaev/Tirkkonen:ZAPNS2000}, for the Whitehead doubles of the torus knots by H.~Zheng \cite{Zheng:CHIAM22007}, and the $(2,2k+1)$-cable of the figure-eight knot by T.~Le and A.~Tran \cite{Le/Tran:JKNOT2010}.
\par
J.~Murakami, M.~Okamoto, T.~Takata, Y.~Yokota, and the author complexified Kashaev's conjecture as follows \cite[Conjecture~1.2]{Murakami/Murakami/Okamoto/Takata/Yokota:EXPMA02}:
\begin{conj}\label{conj:CVC}
For a hyperbolic knot $K$ in $S^3$, we have
\begin{equation*}
  J_N(K;e^{2\pi\i/N})
  \underset{N\to\infty}{\sim}
  \frac{N}{2\pi}\CV(K),
\end{equation*}
where $\CV(K):=\Vol(S^3\setminus{K})+\i\CS^{\mathrm{SO}(3)}(S^3\setminus{K})$ is the complex volume with $\CS^{\mathrm{SO}(3)}$ the $\mathrm{SO}(3)$ Chern--Simons invariant \cite{Meyerhoff:LMSLN112}.
\end{conj}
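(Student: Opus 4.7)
The plan is to extend to a general hyperbolic knot $K$ the saddle-point strategy that has succeeded for particular examples (the figure-eight knot, and the hyperbolic knots with up to seven crossings after Ohtsuki and Yokota). First, I would start from an ideal triangulation $\mathcal{T}$ of $S^3\setminus K$, whose existence is guaranteed by the Epstein--Penner decomposition, and rewrite Kashaev's invariant $\langle K\rangle_N=J_N(K;e^{2\pi\i/N})$ as a state sum indexed by the tetrahedra of $\mathcal{T}$ with summands built from quantum (cyclic) dilogarithms.

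Next, I would recast this discrete sum as a multidimensional contour integral
\begin{equation*}
\langle K\rangle_N
\underset{N\to\infty}{\sim}
\int_{C}\exp\!\left(\frac{N}{2\pi\i}\,\Phi(\mathbf{z})\right)d\mathbf{z},
\end{equation*}
where $\Phi$ is the Neumann--Zagier potential assembled from $\Li_2$ evaluated at the shape parameters of $\mathcal{T}$. A theorem of Yoshida identifies the critical points of $\Phi$ with boundary-parabolic $\SL(2,\C)$-characters of $\pi_1(S^3\setminus K)$, and at the critical point $\mathbf{z}^{\ast}$ corresponding to the discrete faithful (geometric) representation one has $\Phi(\mathbf{z}^{\ast})\equiv \i\,\CV(K)$ modulo the period lattice. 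The core of the argument is then a steepest-descent deformation of $C$ to a Lefschetz thimble through $\mathbf{z}^{\ast}$, producing
\begin{equation*}
\langle K\rangle_N
\underset{N\to\infty}{\sim}
(\text{prefactor})\cdot N^{-n/2}\exp\!\left(\frac{N}{2\pi}\CV(K)\right),
\end{equation*}
whose exponential factor matches the prediction. A Gaussian expansion around $\mathbf{z}^{\ast}$ should further identify the prefactor with a square root of the adjoint Reidemeister torsion at the holonomy representation, refining the conjecture in the spirit of Dimofte--Garoufalidis.

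The principal obstacle is the contour deformation together with the proof of dominance of the geometric saddle. A knot-independent argument demands (i) uniform control of the location and boundedness of the original cycle $C$, coming from a combinatorial presentation of $\mathcal{T}$, (ii) a Morse-theoretic argument proving $\Re\Phi(\mathbf{z}^{\ast})>\Re\Phi(\mathbf{z}')$ for every other critical point $\mathbf{z}'$, and (iii) a complete accounting of poles crossed during the deformation together with their residues, which one expects to contribute only subleading exponentials. All three steps have been verified only case-by-case and in moderate combinatorial complexity, and a uniform treatment would essentially close the conjecture. Because these obstructions remain open in full generality, the present paper instead addresses a finer quantum-modularity refinement in the tractable case of the figure-eight knot, where the required contour analysis can be carried out explicitly.
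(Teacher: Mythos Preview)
The statement you are addressing is a \emph{conjecture}, not a theorem: the paper records Conjecture~\ref{conj:CVC} without proof, as it remains open in general. There is therefore no proof in the paper against which to compare your proposal.

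Your proposal is not a proof either, and you say as much in the final paragraph. What you have written is an accurate survey of the expected saddle-point strategy and its known obstructions, but it does not close any of them. In particular, your step~(ii) --- showing that the geometric critical point strictly dominates every other boundary-parabolic critical value of the Neumann--Zagier potential --- is the heart of the difficulty and is not a consequence of any known general principle; it is exactly what Ohtsuki and Ohtsuki--Yokota establish by direct, knot-specific estimates for each small hyperbolic knot. Likewise, the passage from the discrete state sum to a clean contour integral with controlled error (your step~(i)) requires uniform bounds on the quantum dilogarithm near the boundary of the summation region, and these bounds are again obtained case-by-case rather than from the triangulation alone. Until those steps are supplied, the outline remains a heuristic.

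Your closing remark is correct: the paper does not attempt Conjecture~\ref{conj:CVC} but instead proves the refined asymptotic of Theorem~\ref{thm:main} for the figure-eight knot at the shifted parameter $\exp\bigl((u+2p\pi\i)/N\bigr)$, where the relevant potential is one-dimensional and the saddle-point and contour-deformation arguments (Proposition~\ref{prop:Phi}) can be carried out explicitly.
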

\par
For a hyperbolic knot $K\subset S^3$, let $\rho\colon\pi_1(S^3\setminus{K})\to\SL(2;\C)$ be an irreducible representation that is a small deformation of the holonomy representation corresponding to the complete hyperbolic structure.
Note that $\rho$ corresponds to an incomplete hyperbolic structure \cite{Thurston:2022}.
Up to conjugation, we may assume that $\rho$ sends the meridian of $K$ to $\begin{pmatrix}e^{u/2}&\ast\\0&e^{-u/2}\end{pmatrix}$ and the preferred longitude to $\begin{pmatrix}-e^{v(u)/2}&\ast\\0&-e^{-v(u)/2}\end{pmatrix}$ (see for example \cite{Neumann/Zagier:TOPOL1985}).
Associated with $u$, we can define the $\SL(2;\C)$ Chern--Simons invariant $\CS_{u,v(u)}(\rho)$ and the cohomological adjoint Reidemeister torsion $T_K(u)$.
See \cite[Chapter~5]{Murakami/Yokota:2018} for example.
Note that in \cite{Murakami/Yokota:2018} we define the \emph{homological} adjoint Reidemeister torsion (it is called the twisted Reidemeister torsion there).
So we need to take its inverse to define the cohomological torsion.
Note also that $\Vol(S^3\setminus{K})+\i\CS^{\mathrm{SO}(3)}(S^3\setminus{K})$ in Conjecture~\ref{conj:CVC} coincides with $\i\CS_{0,0}(\rho_0)$ for a hyperbolic knot $K$ with holonomy representation $\rho_0$.
\par
In \cite{Murakami/Yokota:JREIA2007}, Yokota and the author proved that for the figure-eight knot $E$, the limit $\lim_{N\to\infty}\frac{1}{N}\log J_{N}\left(E;e^{(u+2\pi\i)/N}\right)$ exists if the complex number $u$ is in a small neighborhood of $0$ (and not a rational multiple of $\pi\i$).
Moreover the limit determines the holomorphic function $f(u)$ introduced in \cite[Theorem~2]{Neumann/Zagier:TOPOL1985}.
In other words, the asymptotic behavior of $J_{N}(E;e^{(u+2\pi\i)/N})$ determines the $\SL(2;\C)$ Chern--Simons invariant associated with $u$.
\par
For a general hyperbolic knot $K$, the following conjecture was proposed in \cite{Murakami:JTOP2013} (see also \cite{Dimofte/Gukov:Columbia,Gukov/Murakami:FIC2008}).
\begin{conj}\label{conj:PVC}
Let $K\subset S^3$ be a hyperbolic knot.
Then there exists a neighborhood $U\subset\C$ of $0$ such that if $u\in U\setminus\pi\i\Q$, then we have
\begin{multline*}
  J_{N}\left(K;e^{(u+2\pi\i)/N}\right)
  \\
  \underset{N\to\infty}{\sim}
  \frac{\sqrt{-\pi}}{2\sinh(u/2)}
  T_K(u)^{1/2}\left(\frac{N}{u+2\pi\i}\right)^{1/2}
  \exp\left(\frac{N\times S_K(u)}{u+2\pi\i}\right),
\end{multline*}
where $T_K(u)$ is the cohomological adjoint Reidemeister torsion, and $\CS_{u,v(u)}(\rho)=S_K(u)-u\pi\i-\frac{1}{4}uv(u)$ is the Chern--Simons invariant, both associated with $u$.
\end{conj}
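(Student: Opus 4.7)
The plan is to follow the broad strategy pioneered in the figure-eight case by Murakami--Yokota, but upgraded to handle an arbitrary hyperbolic knot $K$. First, starting from a suitable diagram of $K$ or, preferably, from an ideal triangulation $\mathcal{T}$ of $S^3\setminus K$, I would represent $J_{N}(K;q)$ at $q=\exp\bigl((u+2\pi\i)/N\bigr)$ as a finite multi-sum of ratios of quantum factorials (via Kashaev's $R$-matrix, or the Andersen--Kashaev state integral obtained by $q$-deforming the Ptolemy/shape variables of $\mathcal{T}$). Using the known asymptotics of the quantum dilogarithm $(q;q)_{k}$, the summand can be written in the form $\exp\bigl(\tfrac{N}{u+2\pi\i}\Phi(\mathbf{z};u)+\Psi(\mathbf{z};u)+O(1/N)\bigr)$, where $\mathbf{z}\in\C^{n}$ corresponds to the shape parameters of the tetrahedra of $\mathcal{T}$.

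Next, I would apply the saddle point method to this sum/integral. The critical point equations $\partial\Phi/\partial z_{j}=0$ are expected, after a change of variable, to reproduce Thurston's gluing equations for $\mathcal{T}$ together with the meridian condition parameterized by $u$; the relevant saddle $\mathbf{z}_{0}(u)$ is the small deformation of the geometric solution guaranteed by Neumann--Zagier. Substituting $\mathbf{z}_{0}(u)$ should give $\Phi(\mathbf{z}_{0}(u);u)=S_{K}(u)$, by the Neumann--Zagier potential-function description of $\CS_{u,v(u)}(\rho)$ and the correction term $-u\pi\i-\tfrac14 uv(u)$. This identifies the exponential growth rate in the conjecture.

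For the subleading prefactor, the Gaussian fluctuation around $\mathbf{z}_{0}(u)$ produces a factor of $\bigl(2\pi(u+2\pi\i)/N\bigr)^{n/2}\,\det\bigl(-\operatorname{Hess}\Phi\bigr)^{-1/2}$ multiplied by $e^{\Psi(\mathbf{z}_{0}(u);u)}$. The $n$-dimensional Hessian determinant, combined with the one-loop contributions $\Psi$ coming from the classical limit of the dilogarithms and the $R$-matrix weights, should reproduce (up to the anticipated elementary factor $\sqrt{-\pi}/(2\sinh(u/2))$) the cohomological adjoint Reidemeister torsion $T_{K}(u)^{1/2}$. This matches the one-loop conjecture of Dimofte--Gukov and Ohtsuki and is the step where one imports their computations identifying the NZ-Hessian with the torsion.

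The main obstacle, and the reason the conjecture remains open in this generality, will be the rigorous contour-deformation argument: the state-sum integrand has many non-geometric saddles corresponding to other $\SL(2;\C)$-characters of $\pi_{1}(S^{3}\setminus K)$, and one must deform the original cycle of integration into a steepest-descent cycle through $\mathbf{z}_{0}(u)$ without crossing any saddle of larger real part or any pole of the quantum dilogarithm (this is where $u\notin\pi\i\Q$ enters, to avoid resonances). In the figure-eight case this is handled by an explicit one-variable analysis and the potential function is convex in the right sense; for general $K$ the absence of a uniform control over the geometry of the Neumann--Zagier potential on the whole cycle is the key difficulty. A promising route is to combine the shaped-triangulation machinery of Andersen--Kashaev with Garoufalidis--Kashaev's Borel-summability results to produce a universal steepest-descent contour, and then to verify on a case-by-case or class-by-class basis that no obstructing saddle intervenes in a neighborhood of $u=0$.
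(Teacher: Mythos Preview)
The statement you are attempting to prove is labeled as a \emph{conjecture} in the paper, and the paper does not contain a proof of it. Immediately after stating it, the author notes only that the special case $K=E$ (the figure-eight knot) with real $0<u<\arccosh(3/2)$ was established in earlier work; the general hyperbolic case remains open. So there is no ``paper's own proof'' to compare against.

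Your proposal is therefore not a proof but an outline of a strategy, and you are candid about this: you identify the contour-deformation/steepest-descent step as the unresolved obstacle. That diagnosis is accurate. The pieces you assemble---state-sum or state-integral presentation, quantum-dilogarithm asymptotics, identification of the critical-point equations with Neumann--Zagier gluing equations, and the one-loop identification of the Hessian with the adjoint torsion---are the expected ingredients and match the heuristics in the literature you cite (Dimofte--Gukov, Gukov--Murakami). However, none of the hard analytic steps is actually carried out: you do not exhibit the multi-sum/integral representation for a general $K$, you do not verify that the geometric saddle dominates, and you do not justify the exchange of sum and saddle-point expansion. The ``promising route'' via Andersen--Kashaev and Garoufalidis--Kashaev in your last paragraph is speculative rather than a proof step. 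In short, your write-up is a reasonable research plan, but it is not a proof, and the paper itself makes no claim that one exists in this generality.
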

In \cite{Murakami:JTOP2013}, we proved that the conjecture is true for the figure-eight knot and a positive real number $u<\arccosh(3/2)$.
\par
In this paper, we study the colored Jones polynomial of the figure-eight knot evaluated at $q=\exp\bigl((u+2p\pi\i)/N\bigr)$ for a real number $u$ with $0<u<\arccosh(3/2)$ and a positive integer $p$.
We will show
\begin{thm}\label{thm:main}
Let $E$ be the figure-eight knot and put $\xi:=u+2p\pi\i$.
Then we have
\begin{multline}\label{eq:main}
  J_N\left(E;e^{\xi/N}\right)
  \\
  =
  \frac{\sqrt{-\pi}}{2\sinh(u/2)}T_E(u)^{1/2}
  J_{p}\left(E;e^{4N\pi^2/\xi}\right)
  \left(\frac{N}{\xi}\right)^{1/2}
  e^{\frac{N}{\xi}\times S_E(u)}
  \bigl(1+O(N^{-1})\bigr)
\end{multline}
as $N\to\infty$, where we put
\begin{align*}
  S_E(u)
  &:=
  \Li_2\left(e^{-u-\varphi(u)}\right)-\Li_2\left(e^{-u+\varphi(u)}\right)
  +
  u\bigl(\varphi(u)+2\pi\i\bigr),
  \\
  T_E(u)
  &:=
  \frac{2}{\sqrt{(2\cosh{u}+1)(2\cosh{u}-3)}}.
\end{align*}
Here $\Li_2(z):=-\int_{0}^{z}\frac{\log(1-w)}{w}\,dw$ is the dilogarithm function and we put
\begin{equation*}
  \varphi(u)
  :=
  \log
  \left(
    \cosh{u}-\frac{1}{2}-\frac{1}{2}\sqrt{(2\cosh{u}+1)(2\cosh{u}-3)}
  \right).
\end{equation*}
\end{thm}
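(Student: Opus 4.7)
The plan is to extend the proof of the $p=1$ case from \cite{Murakami:JTOP2013} by tracking the multiple saddle points that appear when $p\ge 2$. Starting from Habiro's formula,
\begin{equation*}
J_N(E;q)=\sum_{k=0}^{N-1}\prod_{j=1}^{k}\bigl(q^{(N-j)/2}-q^{-(N-j)/2}\bigr)\bigl(q^{(N+j)/2}-q^{-(N+j)/2}\bigr),
\end{equation*}
and substituting $q=e^{\xi/N}$, the identity $4\sinh(A-B)\sinh(A+B)=2(\cosh 2A-\cosh 2B)$ reduces the summand to $\prod_{j=1}^{k}2\bigl(\cosh u-\cosh(\xi j/N)\bigr)$. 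By the Euler--Maclaurin formula, its logarithm equals $NF(k/N)-\log\bigl(2\sinh(u/2)\bigr)+\tfrac12\log\bigl(2(\cosh u-\cosh(\xi k/N))\bigr)+O(1/N)$, where $F(s):=\int_{0}^{s}\log\bigl(2(\cosh u-\cosh(\xi t))\bigr)\,dt$; the first boundary correction produces the prefactor $1/\bigl(2\sinh(u/2)\bigr)$ in~\eqref{eq:main}.

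Next, I analyse $\sum_{k=0}^{N-1}e^{NF_N(k/N)}$ by Poisson summation, writing it as $N\sum_{m\in\Z}\int_{0}^{1}e^{N[F(s)-2\pi i m s]}\,ds$ plus controlled error, and apply the complex saddle-point method to each Fourier mode. The saddle equation $F'(s)=2\pi i m$ reduces to $\cosh(\xi s)=\cosh u-\tfrac12$ independently of $m$ modulo the logarithm branch, so the saddles are $\xi s_{n}=-\varphi(u)+2\pi i n$ with $n\in\Z$ (together with their $+\varphi(u)$ reflections). For $0<u<\arccosh(3/2)$ the function $\varphi(u)$ is purely imaginary with $-\pi<\Im\varphi(u)<0$, and one checks that exactly the $p$ saddles $s_{0},\ldots,s_{p-1}$ lie on the steepest-descent deformation of $[0,1]$, so only these $p$ modes contribute non-negligibly.

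At the dominant saddle $s_{0}=-\varphi(u)/\xi$, the factorisation $2(\cosh u-\cosh w)=-e^{w}(1-e^{u-w})(1-e^{-u-w})$ together with the antiderivatives $\int\log(1-e^{\pm u-w})\,dw=\Li_{2}(e^{\pm u-w})$ evaluates $NF(s_{0})$ to $(N/\xi)\,S_{E}(u)$ (the $u(\varphi(u)+2\pi i)$ piece coming from $\int\log(-e^{w})\,dw$ with its branch choice). The Gaussian factor $N\sqrt{-2\pi/(NF''(s_{0}))}=\sqrt{-2\pi N/(2\xi\sinh\varphi(u))}$ simplifies to $\sqrt{-\pi}\,(N/\xi)^{1/2}T_{E}(u)^{1/2}$ after using $\sinh^{2}\varphi(u)=\tfrac14(2\cosh u+1)(2\cosh u-3)$; combined with the Euler--Maclaurin prefactor $1/\bigl(2\sinh(u/2)\bigr)$, this reproduces the full prefactor of~\eqref{eq:main}.

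The main obstacle is to recognise that the sum of the remaining contributions from $s_{1},\ldots,s_{p-1}$, relative to $s_{0}$, is exactly $J_{p}\bigl(E;e^{4N\pi^{2}/\xi}\bigr)$. By the $2\pi i$-periodicity of $\cosh$ the exponent at $s_{n}$ differs from that at $s_{0}$ by $(N/\xi)\int_{-\varphi(u)}^{-\varphi(u)+2\pi i n}\log\bigl(2(\cosh u-\cosh w)\bigr)\,dw$; deforming this contour around the zeros $w=\pm u+2\pi i k$ of the integrand produces residue terms which, after resumming over $n=1,\ldots,p-1$ and expanding the resulting exponentials, should assemble into Habiro's formula
\begin{equation*}
J_{p}(E;\tilde q)=\sum_{\ell=0}^{p-1}\prod_{j=1}^{\ell}\bigl(\tilde q^{(p-j)/2}-\tilde q^{-(p-j)/2}\bigr)\bigl(\tilde q^{(p+j)/2}-\tilde q^{-(p+j)/2}\bigr)
\end{equation*}
with $\tilde q=e^{4N\pi^{2}/\xi}$, the index $\ell$ matching a specific choice of which singularities the contour has crossed. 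Carrying out this combinatorial identification term by term, and uniformly bounding the errors from the non-dominant Fourier modes, is the delicate part of the argument; it is the quantum modularity advertised in the abstract.
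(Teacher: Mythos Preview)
Your outline diverges from the paper's proof at the crucial point and leaves exactly that point unproved. In the paper the factor $J_{p}\bigl(E;e^{4N\pi^{2}/\xi}\bigr)$ is obtained \emph{exactly}, before any asymptotic analysis, by rewriting each product $\prod_{l=1}^{k}(1-e^{(N\pm l)\xi/N})$ as a telescoping product of quantum dilogarithms $E_{N}(z)=e^{T_{N}(z)}$. The range $0\le k\le N-1$ is cut into $p$ blocks $mN/p<k<(m+1)N/p$; on the $m$-th block the telescoping, together with the functional equations $E_{N}(z-\gamma/2)/E_{N}(z+\gamma/2)=1-e^{2\pi i z}$ and $E_{N}(z)/E_{N}(z+1)=1+e^{2\pi i z/\gamma}$, produces an explicit prefactor
\[
\beta_{p,m}=e^{-4mpN\pi^{2}/\xi}\prod_{j=1}^{m}\bigl(1-e^{4(p-j)N\pi^{2}/\xi}\bigr)\bigl(1-e^{4(p+j)N\pi^{2}/\xi}\bigr),
\]
and one recognises $\sum_{m=0}^{p-1}\beta_{p,m}=J_{p}\bigl(E;e^{4N\pi^{2}/\xi}\bigr)$ immediately from Habiro's formula. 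Only \emph{after} this exact splitting is Poisson summation \`a la Ohtsuki applied to each block; every block has the same saddle value $F(\sigma_{0})$, so the $p$ blocks contribute a common asymptotic factor multiplied by $\sum_{m}\beta_{p,m}$.

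Your route instead tries to extract $J_{p}$ asymptotically, from the relative heights of $p$ saddle points of a single phase $F$. Two problems. First, the Euler--Maclaurin step ``$\sum_{j=1}^{k}\log\bigl(2(\cosh u-\cosh(\xi j/N))\bigr)\approx NF(k/N)$'' is exactly where the branch structure bites: as $j/N$ crosses each $m/p$ the argument of the logarithm winds by $2\pi$, and a naive continuous $F$ does not record the discrete jumps that ultimately become the factors $1-e^{4(p\pm j)N\pi^{2}/\xi}$. The quantum dilogarithm $T_{N}$ is introduced precisely to handle this. Second, and more seriously, you yourself flag the identification of $J_{p}$ from ``deforming this contour around the zeros \ldots\ should assemble into Habiro's formula'' as the delicate part, and you do not do it. In the paper's framework the $p$ saddles $\sigma_{m}=\sigma_{0}+2m\pi i/\xi$ all have \emph{equal} phase value $\Phi_{m}(\sigma_{m})=F(\sigma_{0})$; the $J_{p}$ factor does not come from differences in saddle heights at all, but from the exact algebraic weights $\beta_{p,m}$. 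So your proposed mechanism for producing $J_{p}$ is not the one that actually operates, and the sketch as it stands is a genuine gap rather than a routine computation left to the reader.
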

\begin{rem}
The case where $p=1$ was proved in \cite{Murakami:JTOP2013}.
\end{rem}
\begin{rem}
When $p=0$, the author proved that for the figure-eight knot $E$, $J_N\left(E;e^{u/N}\right)$ converges to $1/\Delta(E;e^{u})$, where $\Delta(K;t)$ is the Alexander polynomial of a knot $K$ normalized so that $\Delta(K;t)=\Delta(K;t^{-1})$ and $\Delta(U;t)=1$ \cite{Murakami:JPJGT2007}.
Soon after, it was generalized by S.~Garoufalidis and T.~Le to any knot in $S^3$.
See \cite{Garoufalidis/Le:2005,Garoufalidis/Le:aMMR,Garoufalidis/Le:GEOTO2011}.
\end{rem}
As a corollary we have the following asymptotic equivalence.
\begin{cor}
We have
\begin{equation}\label{eq:J_p/J_1}
  \frac{J_N\left(E;e^{\xi/N}\right)}{J_p\left(E;e^{4N\pi^2/\xi}\right)}
  \underset{N\to\infty}{\sim}
  \frac{\sqrt{-\pi}}{2\sinh(u/2)}T_E(u)^{1/2}
  \left(\frac{N}{\xi}\right)^{1/2}
  e^{\frac{N}{\xi}\times S_E(u)}.
\end{equation}
\end{cor}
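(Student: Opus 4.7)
The corollary is an immediate consequence of Theorem~\ref{thm:main}: divide both sides of~\eqref{eq:main} by $J_p\bigl(E;e^{4N\pi^2/\xi}\bigr)$ and the identity becomes the asserted asymptotic equivalence~\eqref{eq:J_p/J_1}. The only step that requires any argument is verifying that this division is legitimate for all sufficiently large $N$, i.e.\ that $J_p\bigl(E;e^{4N\pi^2/\xi}\bigr)\neq 0$ eventually.

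For $p=1$ this is automatic because $J_1\equiv 1$. For $p\ge 2$, I would use that $J_p(E;q)$ is a Laurent polynomial in $q^{1/2}$ (visible, for instance, from Habiro's cyclotomic expansion of the colored Jones polynomial of the figure-eight knot) which is not a monomial, so it has only finitely many zeros in $\C^{\ast}$. Since $\xi=u+2p\pi\i$ with $u>0$, a direct computation gives
\begin{equation*}
  \Re\!\left(\frac{4N\pi^2}{\xi}\right)
  =\frac{4N\pi^2 u}{u^2+4p^2\pi^2},
\end{equation*}
which tends to $+\infty$ with $N$. Hence $\bigl|e^{4N\pi^2/\xi}\bigr|\to\infty$, and the point $e^{4N\pi^2/\xi}$ eventually leaves the (finite) zero set of $J_p(E;\cdot)$.

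Once non-vanishing of the denominator is established, I would simply divide both sides of~\eqref{eq:main} by $J_p\bigl(E;e^{4N\pi^2/\xi}\bigr)$. The prefactor $\tfrac{\sqrt{-\pi}}{2\sinh(u/2)}T_E(u)^{1/2}\bigl(N/\xi\bigr)^{1/2}\exp\bigl(NS_E(u)/\xi\bigr)$ is nonzero for every $N$, and the multiplicative error $1+O(N^{-1})$ appearing on the right-hand side of~\eqref{eq:main} translates directly into the symbol $\underset{N\to\infty}{\sim}$ of~\eqref{eq:J_p/J_1}. No substantive obstacle arises; the corollary is a purely formal rearrangement of Theorem~\ref{thm:main}, with the only nontrivial input being the elementary non-vanishing check above.
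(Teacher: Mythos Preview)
Your proposal is correct and matches the paper's implicit argument: the paper states the corollary with no proof, treating it as an immediate consequence of Theorem~\ref{thm:main} via division. Your additional care in verifying $J_p\bigl(E;e^{4N\pi^2/\xi}\bigr)\ne 0$ for large $N$ (using that $J_p(E;\,\cdot\,)$ is a nontrivial Laurent polynomial and $\bigl|e^{4N\pi^2/\xi}\bigr|\to\infty$) fills in a detail the paper leaves to the reader.
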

This indicates a quantum modularity for the colored Jones polynomial.
\begin{conj*}[Conjecture~\ref{conj:QMCCJ}]
Let $K$ be a hyperbolic knot.
For a small complex number $u$ that is not a rational multiple of $\pi\i$, and positive integers $p$ and $N$, put $\xi:=u+2p\pi\i$ and $X:=\frac{2N\pi\i}{\xi}$.
Then for any $\eta=\begin{pmatrix}a&b\\c&d\end{pmatrix}\in\SL(2;\Z)$ with $c>0$, the following asymptotic equivalence holds.
\begin{equation*}
  \frac{J_{cN+dp}\left(K;e^{2\pi\i\eta(X)}\right)}{J_{p}\left(K;e^{2\pi\i X}\right)}
  \underset{N\to\infty}{\sim}
  C_{K,\eta}(u)\frac{\sqrt{-\pi}}{2\sinh(u/2)}
  \left(\frac{T_K(u)}{\hbar_{\eta}(X)}\right)^{1/2}
  \exp\left(\frac{S_K(u)}{\hbar_{\eta}(X)}\right)
\end{equation*}
for $C_{K,\eta}(u)\in\C$ that does not depend on $p$, where we put $\eta(X):=\frac{aX+b}{cX+d}$ and $\hbar_{\eta}(X):=\frac{2c\pi\i}{cX+d}$.
\end{conj*}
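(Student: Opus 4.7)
The plan is to derive the asymptotic equivalence by combining three ingredients: a Faddeev-type state-integral representation of $J_{cN+dp}(K;e^{2\pi\i\eta(X)})$, the modular duality of the quantum dilogarithm that splits the integrand into a ``$q$-sector'' and a dual ``$\tilde q$-sector'', and a saddle-point analysis at the deformed hyperbolic representation $\rho$.

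\textbf{State integral and modular factorization.} Starting from an ideal triangulation $\mathcal{T}$ of $S^3\setminus K$, I would present $J_{cN+dp}(K;e^{2\pi\i\eta(X)})$ as a contour integral whose integrand is a product of tetrahedral Faddeev dilogarithms $\Phi_b$ in shape variables, with quantum parameter $b^2=\hbar_{\eta}(X)/(2\pi\i)$; for the figure-eight knot this is the Kashaev--Hikami integral underlying Theorem~\ref{thm:main}, and for general hyperbolic $K$ one would invoke the Andersen--Kashaev Teichm\"uller TQFT framework. Next, the duality identity relating $\Phi_b(z)$ and $\Phi_{1/b}(z)$ splits each tetrahedral weight into a factor in $q=e^{2\pi\i\eta(X)/(cN+dp)}$ and a dual factor in $\tilde q=e^{2\pi\i X}$. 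Collecting the $\tilde q$-sector tetrahedron by tetrahedron should reassemble, via the Habiro cyclotomic expansion of the colored Jones polynomial, into $J_{p}(K;e^{2\pi\i X})$---the level $p$ appearing naturally because the $\tilde q$-degree is bounded by the effective ``level'' determined by the arithmetic of $\eta$ and $\xi$. The undetermined multiplicative constant $C_{K,\eta}(u)$ is designed to absorb the modular cocycle (an eighth root of unity together with an $\eta$-dependent normalization) produced by this rearrangement.

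\textbf{Saddle-point analysis.} On the reduced integral in the $q$-sector, I would apply stationary phase using the Neumann--Zagier potential of $\mathcal{T}$. Its nondegenerate critical point corresponding to $\rho$ has critical value $S_K(u)/\hbar_{\eta}(X)$, which reproduces the exponential growth on the right-hand side; the Hessian determinant together with the meridian-cusp correction then yields the prefactor $\frac{\sqrt{-\pi}}{2\sinh(u/2)}(T_K(u)/\hbar_{\eta}(X))^{1/2}$, exactly as in the $\eta=S$, $p=1$ analysis (cf.\ \cite[Chapter~5]{Murakami/Yokota:2018}). The $\xi$-dependence produced by the deformation matches the arithmetic required for the ratio on the left-hand side of the conjecture.

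\textbf{Main obstacle.} The core difficulty is rigorously controlling the steepest-descent contour for a general $\eta$ with $c\ge 2$: the modular transformation severely distorts the natural contour, and competing saddles (the complex conjugate representation and parabolic/cusp saddles) may produce Stokes phenomena that must be carefully tracked. Already the $\eta=S$ case for the figure-eight knot (Theorem~\ref{thm:main}) requires delicate contour deformations; for a generic hyperbolic knot one must additionally secure a positively oriented shaped triangulation compatible with the state integral, which is itself an open problem. A realistic intermediate target is the figure-eight knot with arbitrary $\eta\in\SL(2;\Z)$ having $c>0$, where the explicit state integral is available and the quantum-modular cocycle of Kashaev's invariant, in the sense of Garoufalidis--Zagier, provides an effective blueprint for $C_{E,\eta}(u)$.
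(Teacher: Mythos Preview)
The statement you are trying to prove is a \emph{conjecture} in the paper, not a theorem: the paper does not prove it and does not claim to. The only rigorous result in this direction is Theorem~\ref{thm:main}, which establishes the special case $K=E$, $\eta=\begin{pmatrix}0&-1\\1&0\end{pmatrix}$, and real $0<u<\arccosh(3/2)$; the general statement is then proposed on the basis of that theorem together with computer experiments. So there is no ``paper's own proof'' to compare against, and your proposal should be read as a strategy toward an open problem rather than as a proof.

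That said, it is worth noting how far your outline is from the method the paper actually uses in the one case it does settle. The paper never passes through a state integral or the Andersen--Kashaev framework: it works directly with Habiro's finite sum for $J_N(E;q)$, rewrites the summand via the quantum dilogarithm $T_N$, applies Ohtsuki's Poisson summation to turn the sum into an integral, and then does a classical saddle-point analysis of the explicit function $F(z)$. The appearance of $J_p(E;e^{4N\pi^2/\xi})$ is entirely combinatorial---it arises from the telescoping factors $\beta_{p,m}$ in \eqref{eq:J_f_N}, not from any modular factorization of $\Phi_b$. Your proposed mechanism (duality of the Faddeev dilogarithm splitting the integrand into $q$- and $\tilde q$-sectors, with the $\tilde q$-sector reassembling into $J_p$) is an attractive heuristic, but the step ``collecting the $\tilde q$-sector tetrahedron by tetrahedron should reassemble \dots\ into $J_p(K;e^{2\pi\i X})$'' is precisely the unproved assertion; there is no known general mechanism that produces the colored Jones polynomial at the dual modulus from a state integral for an arbitrary hyperbolic knot. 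You correctly flag the contour and Stokes issues as obstacles, but the deeper gap is earlier: the identification of the dual sector with $J_p$ is itself conjectural.
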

Compare it with Zagier's quantum modularity conjecture for Kashaev's invariant \cite{Zagier:2010}.
\begin{conj*}[Conjecture~\ref{conj:modular_Zagier}]
Let $K$, $\eta$, $N$, and $p$ as above.
If we put $X_0:=\frac{N}{p}$, the following holds.
\begin{equation*}
  \frac{J_{cN+dp}\left(K;e^{2\pi\i\eta(X_0)}\right)}{J_{p}\left(K;e^{2\pi\i X_0}\right)}
  \\
  \underset{N\to\infty}{\sim}
  C_{K,\eta}
  \left(\frac{2\pi}{\hbar_{\eta}(X_0)}\right)^{3/2}
  \exp\left(\frac{\i\CV(K)}{\hbar_{\eta}(X_0)}\right),
\end{equation*}
where $C_{K,\eta}$ is a complex number depending only on $\eta$ and $K$.
\end{conj*}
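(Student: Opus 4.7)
Since the statement is offered as a conjecture, my proposal is a plan of attack rather than a complete proof, concentrated on the figure-eight knot $K=E$ where Theorem~\ref{thm:main} already supplies much of the analytic machinery. The strategy is to mimic the proof of Theorem~\ref{thm:main} but at the boundary value $u=0$, $\xi=2p\pi\i$, with the colored Jones polynomial evaluated at the $\eta$-twisted argument $e^{2\pi\i\eta(X_0)}$.

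First, I would set up an integral representation of $J_{cN+dp}\!\left(E;e^{2\pi\i\eta(X_0)}\right)$ of the same type used in the proof of Theorem~\ref{thm:main}. Writing $\eta(X_0)=\frac{a}{c}-\frac{p}{c(cN+dp)}$, the exponential $e^{2\pi\i\eta(X_0)}$ factors as a primitive $c$-th root of unity $e^{2\pi\i a/c}$ times an $O(1/N)$ perturbation, which can be absorbed into the integrand to yield a phase function $\Phi_{\eta,X_0}$ of Neumann--Zagier type carrying an $\eta$-twist in its parameters.

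Second, I would analyse the critical points of $\Phi_{\eta,X_0}$. Because $X_0=N/p$ corresponds to $u=0$, the two saddle points that are well separated in Theorem~\ref{thm:main}, namely the geometric representation and its complex conjugate, coalesce at the complete hyperbolic structure. In consequence, the asymptotic order jumps from the $N^{1/2}$ growth contained in $(N/\xi)^{1/2}$ of~\eqref{eq:main} to the $N^{3/2}$ growth contained in $\bigl(2\pi/\hbar_{\eta}(X_0)\bigr)^{3/2}$. A uniform (confluent) saddle-point analysis then yields the leading exponential $\exp\bigl(\i\CV(E)/\hbar_{\eta}(X_0)\bigr)$ from $S_E(0)=\i\CV(E)$, and packages the sub-leading contributions together with the asymptotic behaviour of the denominator $J_p\!\left(E;e^{2\pi\i X_0}\right)$ into a constant $C_{E,\eta}$; the fact that $J_p$ is merely a bounded, $\bmod p$-periodic function of $N$ is what makes $C_{E,\eta}$ independent of $p$ as claimed.

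The hardest part is carrying out the confluent saddle-point analysis uniformly in $N$ while simultaneously handling the rapid oscillation $e^{2\pi\i(aN+bp)/(cN+dp)}$: this calls for a Farey-type decomposition of the integration contour and careful tracking of Stokes phenomena across the merged saddle, neither of which is available off-the-shelf from the argument used to prove Theorem~\ref{thm:main}. A secondary obstacle is the extension to general hyperbolic knots, where no analogue of the integral representation underlying Theorem~\ref{thm:main} is presently known; for such knots the confluent-saddle geometry must be imposed as part of the conjectural framework, which is why the statement is phrased as a conjecture for arbitrary hyperbolic $K$.
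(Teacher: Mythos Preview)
The paper does not prove this statement: it is Zagier's quantum modularity conjecture, recorded as Conjecture~\ref{conj:modular_Zagier} and left unproved.  The only remark the paper adds is that the figure-eight case has been established by Garoufalidis--Zagier, and further cases by Bettin--Drappeau, via methods unrelated to the present paper.  So there is no ``paper's own proof'' to compare your plan against.

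That said, your proposed mechanism contains a concrete error.  You attribute the jump from the exponent $1/2$ in \eqref{eq:main} to the exponent $3/2$ in \eqref{eq:modularity_Zagier} to a \emph{confluence of saddle points} at $u=0$.  This is not what happens.  From \eqref{eq:F_Taylor} one has $a_2=\tfrac{1}{2}\xi\sqrt{(2\cosh u+1)(2\cosh u-3)}$, and at $u=0$ this equals $\tfrac{1}{2}\xi\cdot\i\sqrt{3}\neq 0$; the unique saddle $\sigma_0$ in each strip $U_m$ remains non-degenerate, and the standard (not confluent) saddle-point method still gives an $N^{1/2}$ contribution from the integral.  What actually breaks down at $u=0$ is the prefactor: in \eqref{eq:J_f_N} the ratio $\dfrac{1-e^{-4pN\pi^2/\xi}}{2\sinh(u/2)}$ becomes $0/0$ when $\xi=2p\pi\i$, and resolving this indeterminacy (together with the analogous degeneration inside $\beta_{p,m}$) is where the extra power of $N$ must come from.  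A confluent-saddle analysis is therefore the wrong tool; the difficulty is algebraic/arithmetic in the prefactors rather than geometric in the phase function, which is also why the known proofs (Garoufalidis--Zagier, Bettin--Drappeau) proceed by quite different means.
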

\par
The paper is organized as follows.
\par
In Section~\ref{sec:preliminaries}, we define the colored Jones polynomial and introduce a quantum dilogarithm.
We express the colored Jones polynomial as a sum of the quantum dilogarithms assuming $(p,N)=1$ in Section~\ref{sec:summation}.
In Section~\ref{sec:approximation}, we approximate it by using the dilogarithm function by using the fact that the quantum dilogarithm converges to the dilogarithm.
We use the Poisson summation formula \'a la Ohtsuki \cite{Ohtsuki:QT2016} to replace the sum with an integral in Section~\ref{sec:Poisson}.
In Section~\ref{sec:proof}, we prove the main theorem (Theorem~\ref{thm:main}).
We discuss a quantum modularity of the colored Jones polynomial in Section~\ref{sec:modular}.
Section~\ref{sec:lemmas} is devoted to proofs of lemmas used in the other sections.
In Appendix, we calculate the colored Jones polynomial in the case where $(p,N)\ne1$.
\section{Preliminaries}\label{sec:preliminaries}
Let $J_N(K;q)$ be the $N$-dimensional colored Jones polynomial of $K\subset S^3$ associated with the $N$-dimensional irreducible representation of the Lie algebra $\sl_2(\C)$, where $N$ is a positive integer and $q$ is a complex parameter \cite{Kirillov/Reshetikhin:1989,Reshetikhin/Turaev:COMMP90,Kirby/Melvin:INVEM1991}.
It is normalized so that $J_N(U;q)=1$ for the unknot $U$.
In particular, $J_2(K;q)$ is (a version) of the original Jones polynomial \cite{Jones:BULAM31985}.
More precisely, $J_2(K;q)$ satisfies the following skein relation:
\begin{equation*}
  q
  J_2\left(\raisebox{-1.8mm}{\includegraphics[scale=0.1]{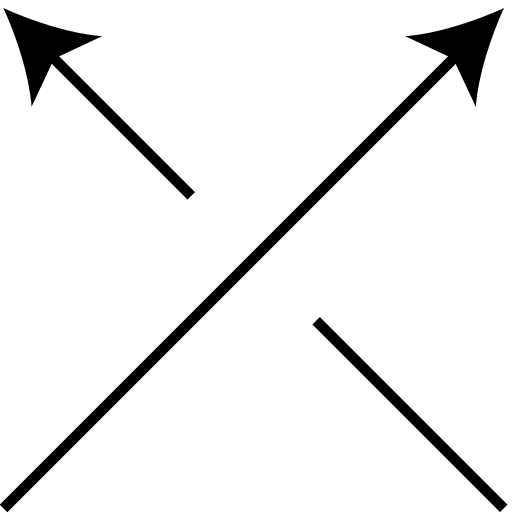}};q\right)
  -
  q^{-1}
  J_2\left(\raisebox{-1.8mm}{\includegraphics[scale=0.1]{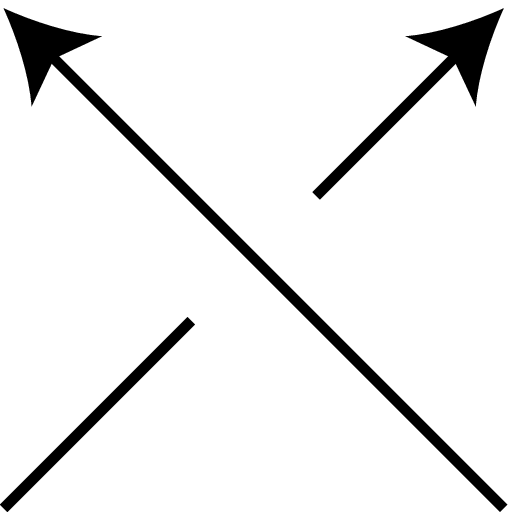}};q\right)
  =
  \left(q^{1/2}-q^{-1/2}\right)
  J_2\left(\raisebox{-1.8mm}{\includegraphics[scale=0.1]{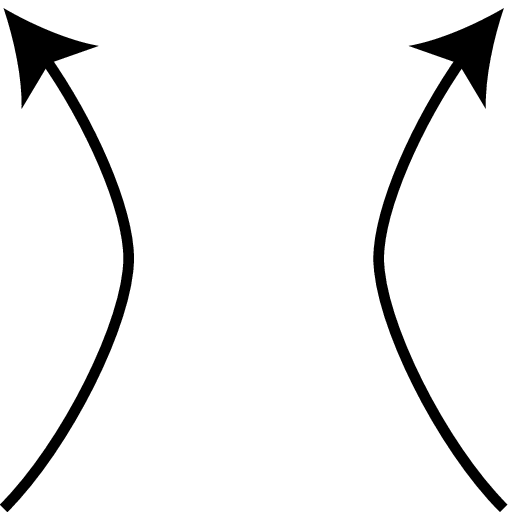}};q\right).
\end{equation*}
\par
K.~Habiro \cite[P.~36 (1)]{Habiro:SURIK2000} (see also \cite[Theorem~5.1]{Masbaum:ALGGT12003}) and T.~Le \cite[1.2.2.~Example, P.~129]{Le:TOPOA2003} gave a simple formula for the colored Jones polynomial of the figure-eight knot $E$ as follows:
\begin{align}\label{eq:J}
  J_{N}(E;q)
  &=
  \sum_{k=0}^{N-1}\prod_{l=1}^{k}
  \left(q^{(N+l)/2}-q^{-(N+l)/2}\right)\left(q^{(N-l)/2}-q^{-(N-l)/2}\right)
  \\
  &=
  \sum_{k=0}^{N-1}q^{-kN}
  \prod_{l=1}^{k}\left(1-q^{N+l}\right)\left(1-q^{N-l}\right).
\end{align}
For a real number $u$ with $0<u<\kappa:=\arccosh(3/2)=0.962424\dots$, and a positive integer $p$, we put $\xi:=u+2p\pi\i$.
Then we have
\begin{equation}\label{eq:J_xi}
  J_N\left(E;e^{\xi/N}\right)
  =
  \sum_{k=0}^{N-1}e^{-k\xi}
  \prod_{l=1}^{k}\left(1-e^{(N+l)\xi/N}\right)\left(1-e^{(N-l)\xi/N}\right).
\end{equation}
\par
We want to replace the products in \eqref{eq:J_xi} with some values of a continuous function.
To do that we introduce a so-called quantum dilogarithm following \cite{Faddeev:LETMP1995}.
\par
Put $\Rpath:=(-\infty,-1]\cup\{z\in\C\mid|z|=1,\Im{z}\ge0\}\cup[1,\infty)$ and orient it from left to right.
We consider the integral $\int_{\Rpath}\frac{e^{(2z-1)x}}{x\sinh(x)\sinh(\gamma x)}\,dx$, where $\gamma:=\frac{\xi}{2N\pi\i}$.
\begin{lem}\label{lem:T_N_convergence}
The integral $\int_{\Rpath}\frac{e^{(2z-1)x}}{x\sinh(x)\sinh(\gamma x)}\,dx$ converges if $-p/(2N)<\Re{z}<1+p/(2N)$.
\end{lem}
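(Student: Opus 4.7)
The plan is to split the contour $\Rpath$ into its three natural pieces: the two infinite real rays $(-\infty,-1]$ and $[1,\infty)$, and the compact upper unit semicircle joining $-1$ to $1$. On the semicircle, I first check that the integrand has no poles: the factor $x$ is nonzero on $\{|x|=1\}$, the poles of $1/\sinh x$ lie at $k\pi\i$ with $|k\pi\i|\ge \pi>1$, and the poles of $1/\sinh(\gamma x)$ lie at $k\pi\i/\gamma$ whose modulus $\pi/|\gamma|=2N\pi^2/|\xi|$ exceeds $1$ for the parameters at hand. Hence the integrand is continuous on the (compact) arc, and that contribution is automatically finite. The entire question of convergence is therefore a question about the two tails.

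The key observation is that $\gamma=\xi/(2N\pi\i)=p/N-\i u/(2N\pi)$ has $\Re\gamma=p/N>0$. As $x\to+\infty$ along the real axis, $\sinh x\sim e^{x}/2$, and because $\Re(\gamma x)\to+\infty$ we also have $\sinh(\gamma x)\sim e^{\gamma x}/2$. Consequently
\[
  \left|\frac{e^{(2z-1)x}}{x\sinh(x)\sinh(\gamma x)}\right|
  \;\sim\;
  \frac{4}{|x|}\,e^{(2\Re z-2-\Re\gamma)\,x},
\]
which is integrable on $[1,\infty)$ exactly when $2\Re z-2-\Re\gamma<0$, i.e.\ $\Re z<1+p/(2N)$.

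Symmetrically, as $x\to-\infty$ along the real axis, the dominant exponentials are $-e^{-x}/2$ and $-e^{-\gamma x}/2$ (using $\Re(\gamma x)\to-\infty$), so
\[
  \left|\frac{e^{(2z-1)x}}{x\sinh(x)\sinh(\gamma x)}\right|
  \;\sim\;
  \frac{4}{|x|}\,e^{(2\Re z+\Re\gamma)\,x},
\]
which is integrable on $(-\infty,-1]$ exactly when $2\Re z+\Re\gamma>0$, i.e.\ $\Re z>-p/(2N)$. Combining the two bounds reproduces the stated range $-p/(2N)<\Re z<1+p/(2N)$.

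I do not expect any real obstacle: the lemma is essentially an exponential-dominance computation. The one point that must be handled with a little care is isolating $\Re\gamma$ (rather than $|\gamma|$ or $\Im\gamma$) as the quantity controlling tail decay, and verifying that the semicircular detour of $\Rpath$ around $x=0$ keeps the contour off the other polar loci of the integrand for every admissible choice of $u$, $p$, $N$; both are elementary once made explicit.
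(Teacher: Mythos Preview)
Your proof is correct and follows essentially the same approach as the paper: both arguments reduce convergence to the exponential behavior of the integrand on the real tails, using $\Re\gamma=p/N>0$ to identify the dominant exponentials $\sinh(\gamma x)\sim\pm e^{\pm\gamma x}/2$ and arriving at the same pair of inequalities on $\Re z$. Your treatment is slightly more explicit in separating out the compact semicircular arc and checking pole-avoidance there (the paper records that observation just before the lemma rather than inside the proof), but the substance is the same.
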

A proof is given in \S~\ref{sec:lemmas}.
Note that the poles of the integrand is
\begin{equation*}
  \{x\in\C\mid x=k\pi\i\;\;\text{($k\in\Z$)}
  \}
  \cup
  \{x\in\C\mid x=l\pi\i/\xi\;\;\text{($l\in\Z$)}\}
\end{equation*}
and so $\Rpath$ avoids the poles.
\par
We define
\begin{equation*}
  T_N(z)
  :=
  \frac{1}{4}
  \int_{\Rpath}\frac{e^{(2z-1)x}}{x\sinh(x)\sinh(\gamma x)}\,dx.
\end{equation*}
\par
We also consider three related integrals $\int_{\Rpath}\frac{e^{(2z-1)x}}{x^k\sinh(x)}\,dx$ ($k=0,1,2$), which converge if $0<\Re{z}<1$ by similar reasons to Lemma~\ref{lem:T_N_convergence}.
\begin{defn}\label{def:L_k}
We put
\begin{align*}
  \L_0(z)
  &:=
  \int_{\Rpath}\frac{e^{(2z-1)x}}{\sinh(x)}\,dx,
  \\
  \L_1(z)
  &:=
  \frac{-1}{2}\int_{\Rpath}\frac{e^{(2z-1)x}}{x\sinh(x)}\,dx,
  \\
  \L_2(z)
  &:=
  \frac{\pi\i}{2}\int_{\Rpath}\frac{e^{(2z-1)x}}{x^2\sinh(x)}\,dx
\end{align*}
for $z$ with $0<\Re{z}<1$.
\end{defn}
Their derivatives are given as follows.
\begin{align*}
  \frac{d\,\L_2(z)}{d\,z}
  &=
  -2\pi\i\L_1(z),
  \\
  \frac{d\,\L_1(z)}{d\,z}
  &=
  -\L_0(z).
\end{align*}
\par
We also have the following lemma.
\begin{lem}\label{lem:L_k}
If $0<\Re{z}<1$, then we have
\begin{align*}
  \L_0(z)
  &=
  \frac{-2\pi\i}{1-e^{-2\pi\i z}},
  \\
  \L_1(z)
  &=
  \log\left(1-e^{2\pi\i z}\right),
  \\
  \L_2(z)
  &=
  \Li_2\left(e^{2\pi\i z}\right).
\end{align*}
Here we use the branch of $\log{w}$ so that $-\pi<\Im\log{w}\le\pi$ and $\Li_2(w)$ has branch cut at $(1,\infty)$.
\end{lem}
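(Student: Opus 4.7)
The plan is to evaluate $\L_0(z)$ directly from its contour integral and then obtain $\L_1(z)$ and $\L_2(z)$ by integrating the differential identities $d\L_1/dz=-\L_0$ and $d\L_2/dz=-2\pi\i\L_1$ recorded just above. Throughout I assume $z$ lies in the strip $0<\Re(z)<1$.

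For $\L_0$, I would deform the unit semicircle in $\Rpath$ to a semicircle of radius $\varepsilon$ around the origin in the upper half-plane; this is legitimate because $e^{(2z-1)x}/\sinh(x)$ is holomorphic in the half-annulus $\{\varepsilon<|x|\le 1,\ \Im(x)\ge 0\}$. Passing to $\varepsilon\to 0$, the two real tails combine into $\mathrm{PV}\int_{-\infty}^{\infty}e^{(2z-1)x}/\sinh(x)\,dx$, while the shrinking arc, traversed clockwise from $-\varepsilon$ to $\varepsilon$, contributes $-\pi\i\cdot\Res_{x=0}=-\pi\i$. Splitting $e^{(2z-1)x}=\sinh((2z-1)x)+\cosh((2z-1)x)$, the cosh term is odd in $x$ and vanishes in principal value, while the sinh term reduces to the classical integral $\int_{-\infty}^{\infty}\sinh(ax)/\sinh(x)\,dx=\pi\tan(\pi a/2)$ valid for $|\Re(a)|<1$, which at $a=2z-1$ equals $-\pi\cot(\pi z)$. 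Combining gives $\L_0(z)=-\pi\cot(\pi z)-\pi\i$, and writing $\cot(\pi z)$ in exponentials collapses this to $-2\pi\i/(1-e^{-2\pi\i z})$.

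For $\L_1$, a direct differentiation shows $(d/dz)\log(1-e^{2\pi\i z})=2\pi\i/(1-e^{-2\pi\i z})=-\L_0(z)$, so $\L_1(z)=\log(1-e^{2\pi\i z})+C_1$ for some constant $C_1$. I would fix $C_1$ by sending $\Im(z)\to+\infty$ inside the strip: then $e^{2\pi\i z}\to 0$, hence $\log(1-e^{2\pi\i z})\to 0$, while in the integral representation of $\L_1$ the exponent on the unit semicircle satisfies $\Re((2z-1)x)=\Re(2z-1)\cos\theta-2\Im(z)\sin\theta\to-\infty$ (dominated uniformly near the endpoints via $\sin\theta\ge 2\theta/\pi$), and on the real tails the integrand oscillates at frequency $2\Im(z)$ against an integrable amplitude so that the Riemann--Lebesgue lemma applies. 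Thus $\L_1(z)\to 0$ as well, forcing $C_1=0$.

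For $\L_2$, the same workflow applies: the identity $d\L_2/dz=-2\pi\i\L_1=-2\pi\i\log(1-e^{2\pi\i z})$ together with the substitution $w=e^{2\pi\i z}$ and the definition $\Li_2(w)=-\int_0^w\log(1-\zeta)/\zeta\,d\zeta$ exhibits $\Li_2(e^{2\pi\i z})$ as an antiderivative, and the same $\Im(z)\to+\infty$ argument forces the additive constant to vanish. The delicate point I expect to need to handle carefully is the branch choice: within the strip $0<\Re(z)<1$, both conditions $1-e^{2\pi\i z}\in(-\infty,0]$ and $e^{2\pi\i z}\in(1,\infty)$ require $\Re(z)\in\Z$, which is excluded, so the expressions $\log(1-e^{2\pi\i z})$ and $\Li_2(e^{2\pi\i z})$ are single-valued on the whole strip in the branches specified in the statement, and the antidifferentiations above are unambiguous.
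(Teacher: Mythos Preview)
Your proof is correct and takes a genuinely different route from the paper's. The paper does not compute the contour integrals directly; instead it imports piecewise formulas from \cite[Lemma~2.5]{Murakami/Tran:2021}, which already give $\L_0(z)=\frac{-2\pi\i}{1-e^{-2\pi\i z}}$, while $\L_1$ and $\L_2$ are expressed in the claimed form only for $\Im z\ge 0$ and in a different (polynomial-plus-$\Li_2(e^{-2\pi\i z})$ or $\log(1-e^{-2\pi\i z})$) form for $\Im z<0$. The paper's remaining work is then to reconcile the two pieces using the inversion identity \eqref{eq:dilog} and the logarithm identity $\log(1-e^{-2\pi\i z})+\pi\i(2z-1)=\log(1-e^{2\pi\i z})$. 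Your argument, by contrast, is self-contained: you evaluate $\L_0$ by a direct residue/principal-value computation and then antidifferentiate along the derivative relations recorded just before the lemma, pinning down the constants via the limit $\Im z\to+\infty$ (which is clean, since on the semicircle dominated convergence applies and on the real tails Riemann--Lebesgue applies). This avoids the external citation and the case split entirely, at the price of invoking the classical integral $\int_{-\infty}^{\infty}\sinh(ax)/\sinh(x)\,dx=\pi\tan(\pi a/2)$; the paper's approach, on the other hand, makes the connection with the inversion formula for $\Li_2$ explicit, which is used again later in the paper.
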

\begin{proof}
As \cite[Lemma~2.5]{Murakami/Tran:2021}, we can prove the following equalities:
\begin{align*}
  \L_0(z)
  &=
  \frac{-2\pi\i}{1-e^{-2\pi\i z}},
  \\
  \L_1(z)
  &=
  \begin{cases}
    \log\left(1-e^{2\pi\i z}\right)             &\text{if $\Im{z}\ge0$}, \\
    \pi\i(2z-1)+\log\left(1-e^{-2\pi\i z}\right)&\text{if $\Im{z}<0$},
  \end{cases}
  \\
  \L_2(z)
  &=
  \begin{cases}
    \Li_2\left(e^{2\pi\i z}\right)                            &\text{if $\Im{z}\ge0$}, \\
    \frac{\pi^2}{3}(6z^2-6z+1)-\Li_2\left(e^{-2\pi\i z}\right)&\text{if $\Im{z}<0$}.
  \end{cases}
\end{align*}
So we need to prove the lemma for the case where $\Im{z}<0$.
\par
There is nothing to prove for $\L_0(z)$.
\par
If $0<\Re{z}<1$, then using the identity (see for example \cite{Maximon:2003})
\begin{equation}\label{eq:dilog}
  \Li_2(w^{-1})
  =
  -\Li_2(w)-\frac{\pi^2}{6}-\frac{1}{2}\bigl(\log(-w)\bigr)^2,
\end{equation}
we have
\begin{align*}
  \Li_2\left(e^{-2\pi\i z}\right)
  &=
  -\Li_2\left(e^{2\pi\i z}\right)-\frac{\pi^2}{6}
  -\frac{1}{2}\bigl(2\pi\i z-\pi\i\bigr)^2
  \\
  &=
  -\Li_2\left(e^{2\pi\i z}\right)
  +2\pi^2z^2+\frac{\pi^2}{3},
  \notag
\end{align*}
where we use the fact that $0<\Im(2\pi\i z)<2\pi$.
Therefore we have
\begin{equation*}
  \L_2(z)
  =
  -\Li_2\left(e^{-2\pi\i z}\right)+\frac{\pi^2}{3}\bigl(6z^2-6z+1\bigr)
  =
  \Li_2\left(e^{2\pi\i z}\right)
\end{equation*}
as required.
\par
As for $\L_1(z)$, since $\log\left(e^{\pi\i(2z-1)}\right)=2\pi\i z-\pi\i$, we have
\begin{align*}
  \log\left(1-e^{-2\pi\i z}\right)+\pi\i(2z-1)
  &=
  \log\left(1-e^{-2\pi\i z}\right)+\log\left(e^{\pi\i(2z-1)}\right)
  \\
  &=
  \log\left(1-e^{2\pi\i z}\right),
\end{align*}
completing the proof.
\end{proof}
\par
We can prove that $T_N(z)$ converges to $\frac{N}{\xi}\Li_2\left(e^{2\pi\i z}\right)$.
More precisely we have
\begin{lem}\label{lem:T_N_L_2}
For any positive real number $M$ and a sufficiently small positive real number $\nu$, we have
\begin{equation*}
  T_N(z)
  =
  \frac{N}{\xi}\Li_2\left(e^{2\pi\i z}\right)+O(1/N)
\end{equation*}
as $N\to\infty$ in the region
\begin{equation*}
  \{z\in\C\mid\nu\le\Re{z}\le1-\nu,|\Im{z}|\le M\}.
\end{equation*}
In particular $T_N(z)$ uniformly converges to $\frac{N}{\xi}\Li_2\left(e^{2\pi\i z}\right)$ in the region above.
\end{lem}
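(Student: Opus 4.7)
The plan is to substitute the expansion $\frac{1}{\sinh(\gamma x)} = \frac{1}{\gamma x} + \frac{g(\gamma x)-1}{\gamma x}$, where $g(y) := y/\sinh y$ is analytic at $0$ with $g(0)=1$ and $g(y)-1 = O(y^2)$ near the origin, into the defining integral of $T_N(z)$. This decomposes
\begin{equation*}
T_N(z) = \frac{1}{4\gamma} \int_{\Rpath} \frac{e^{(2z-1)x}}{x^2 \sinh(x)}\,dx + \frac{1}{4\gamma} \int_{\Rpath} \frac{e^{(2z-1)x}(g(\gamma x)-1)}{x^2 \sinh(x)}\,dx.
\end{equation*}
By Definition~\ref{def:L_k} and Lemma~\ref{lem:L_k}, the first term equals $\frac{1}{2\pi\i\,\gamma}\L_2(z) = \frac{N}{\xi}\Li_2(e^{2\pi\i z})$, using $\gamma = \xi/(2N\pi\i)$. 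The task therefore reduces to bounding the remainder integral by $O(|\gamma|^2)$ uniformly in the strip $\nu \le \Re z \le 1-\nu$, $|\Im z| \le M$; the prefactor $1/\gamma$ then produces the $O(1/N)$ error.

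To control the remainder, I would split $\Rpath$ into three pieces: the semicircular arc $A := \{x : |x|=1, \Im x \ge 0\}$, the middle real segments $1 \le |x| \le |\gamma|^{-1/2}$, and the tails $|x| \ge |\gamma|^{-1/2}$. On $A$, where $|\gamma x| = |\gamma|$ is small, the bound $|g(\gamma x)-1| = O(|\gamma|^2)$ combined with the boundedness of the remaining integrand on this compact arc yields a contribution of $O(|\gamma|^2)$. On the middle segments, the same bound $|g(\gamma x) - 1| \le C|\gamma x|^2$ cancels the $x^2$ in the denominator, leaving $C|\gamma|^2\int_1^\infty \frac{e^{(2\Re z - 1)x}}{|\sinh x|}\,dx$, which converges uniformly because $2\Re z - 1 \le 1 - 2\nu < 1$. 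On the tails, $|g(\gamma x) - 1|$ is merely bounded by a constant (using $\Re\gamma = p/N > 0$, so that $\gamma x$ stays away from the poles $k\pi\i$ of $g$ and $\sinh(\gamma x)$ grows on the real rays), but the residual factor $\frac{e^{(2\Re z -1)x}}{x^2 \sinh(x)}$ is already exponentially decaying in $x$, so the integral over $|x| \ge |\gamma|^{-1/2}$ is superexponentially small in $N$.

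The main obstacle is maintaining uniformity in $z$ across these three regimes. The bound $|e^{(2z-1)x}| \le e^{2\sqrt{1+M^2}+1}$ on $A$ and the identity $|e^{(2z-1)x}| = e^{(2\Re z - 1)x}$ on the real tails—where the oscillating factor $e^{2\i\Im z\cdot x}$ has modulus $1$—allow all constants to be chosen depending only on $\nu$, $M$, $u$, and $p$, never on $z$ itself or on $N$. Summing the three contributions gives $\frac{1}{4|\gamma|}\cdot O(|\gamma|^2) = O(|\gamma|) = O(1/N)$ uniformly in the prescribed region, which proves the stated estimate and hence the uniform convergence.
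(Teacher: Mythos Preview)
Your proof is correct and follows essentially the same strategy as the paper: isolate the leading term $\frac{N}{\xi}\L_2(z)$ by writing $\frac{1}{\sinh(\gamma x)}=\frac{1}{\gamma x}+\frac{1}{\gamma x}\bigl(\frac{\gamma x}{\sinh(\gamma x)}-1\bigr)$, then bound the remainder using $\frac{\gamma x}{\sinh(\gamma x)}-1=O(|\gamma x|^2)$ and show the resulting integral $\int_{\Rpath}\bigl|e^{(2z-1)x}/\sinh(x)\bigr|\,dx$ is uniformly bounded on the strip. The only cosmetic difference is that the paper applies the quadratic bound $|g(\gamma x)-1|\le c|\gamma x|^2$ along all of $\Rpath$ at once (valid because $\gamma x$ for $x\in\Rpath$ lies on a fixed line through the origin that avoids the poles of $y/\sinh y$, so the bound extends globally), whereas you split off a tail $|x|\ge|\gamma|^{-1/2}$ and use only boundedness of $g$ there; your version is slightly more explicit about where the Taylor bound is being invoked, but the two arguments are otherwise identical.
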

A proof is also given in \S~\ref{sec:lemmas}.
\par
The following lemma is essential in the paper.
Put $E_N(z):=e^{T_N(z)}$.
\begin{lem}\label{lem:q_dilog}
If $0<\Re{z}<1$, then we have
\begin{equation*}
  \frac{E_N(z-\gamma/2)}{E_N(z+\gamma/2)}
  =
  1-e^{2\pi\i z}.
\end{equation*}
\end{lem}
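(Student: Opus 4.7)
The plan is to compute $T_N(z-\gamma/2)-T_N(z+\gamma/2)$ directly from the integral definition and recognize the result as $\L_1(z)$; then Lemma~\ref{lem:L_k} finishes the job.

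First, I would check that the hypothesis $0<\Re{z}<1$ ensures that both $T_N(z\pm\gamma/2)$ are defined. Writing $\gamma=p/N-\i u/(2N\pi)$, we have $\Re(\gamma/2)=p/(2N)$, so by Lemma~\ref{lem:T_N_convergence} the condition $-p/(2N)<\Re{z}\pm p/(2N)<1+p/(2N)$ reduces precisely to $0<\Re{z}<1$. The same inequalities guarantee the convergence of the integral appearing in $\L_1(z)$.

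Next, using a common path $\Rpath$ for both integrals (both integrands have identical pole structure, and the shifts $\pm\gamma/2$ do not move poles across $\Rpath$), I would subtract:
\begin{equation*}
  T_N(z-\gamma/2)-T_N(z+\gamma/2)
  =
  \frac{1}{4}\int_{\Rpath}
  \frac{e^{(2z-1)x}\bigl(e^{-\gamma x}-e^{\gamma x}\bigr)}{x\sinh(x)\sinh(\gamma x)}\,dx.
\end{equation*}
The factor $e^{-\gamma x}-e^{\gamma x}=-2\sinh(\gamma x)$ cancels the $\sinh(\gamma x)$ in the denominator, leaving
\begin{equation*}
  T_N(z-\gamma/2)-T_N(z+\gamma/2)
  =
  \frac{-1}{2}\int_{\Rpath}\frac{e^{(2z-1)x}}{x\sinh(x)}\,dx
  =
  \L_1(z).
\end{equation*}
By Lemma~\ref{lem:L_k}, $\L_1(z)=\log(1-e^{2\pi\i z})$, so exponentiating yields $E_N(z-\gamma/2)/E_N(z+\gamma/2)=1-e^{2\pi\i z}$, as desired.

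The computation itself is essentially a one-line cancellation; the only genuine point requiring care is justifying that the two shifted integrals can be combined on a single contour and that the resulting cancellation is legitimate, which is where the convergence hypothesis $0<\Re{z}<1$ enters. Once that is in place, the identification with $\L_1$ is forced by the definitions, and the passage to $\log(1-e^{2\pi\i z})$ is immediate from the preceding lemma.
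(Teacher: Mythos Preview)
Your proof is correct and follows essentially the same approach as the paper: compute $T_N(z-\gamma/2)-T_N(z+\gamma/2)$ directly, cancel the $\sinh(\gamma x)$ factor to obtain $\L_1(z)$, and invoke Lemma~\ref{lem:L_k} before exponentiating. Your added remarks about the domain of definition are a welcome bit of extra care, but the argument is otherwise identical.
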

\begin{proof}
Recalling that $\gamma=\frac{\xi}{2N\pi\i}$, we have
\begin{align*}
  T_N(z-\gamma/2)-T_N(z+\gamma/2)
  &=
  \frac{1}{4}
  \int_{\Rpath}
  \frac{e^{(2z-\gamma-1)x}-e^{(2z+\gamma-1)x}}
       {x\sinh(x)\sinh(\gamma x)}\,dx
  \\
  &=
  -
  \int_{\Rpath}
  \frac{e^{(2z-1)x}}{2x\sinh(x)}\,dx
  =
  \L_1(z).
\end{align*}
Taking the exponentials of both sides, the lemma follows from Lemma~\ref{lem:L_k}.
\end{proof}
As a corollary, we have
\begin{cor}\label{cor:q_dilog}
Let $n$ be an integer.
If $nN/p<j<(n+1)N/p$, we have
\begin{align*}
  \frac{E_N\bigl((j-1/2)\gamma-n\bigr)}{E_N\bigl((j+1/2)\gamma-n\bigr)}
  &=
  1-e^{2j\gamma\pi\i},
  \\
  \intertext{and}
  \frac{E_N\bigl(n+1-(j+1/2)\gamma\bigr)}{E_N\bigl(n+1-(j-1/2)\gamma\bigr)}
  &=
  1-e^{-2j\gamma\pi\i}.
\end{align*}
\end{cor}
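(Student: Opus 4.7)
The plan is to apply Lemma~\ref{lem:q_dilog} twice with carefully chosen substitutions of $z$ that package the integer $n$ and the half-integer shifts $\pm1/2$ into the real part of $z$.

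First I would record the numerical fact that $\gamma=\frac{\xi}{2N\pi\i}=\frac{p}{N}-\frac{u\i}{2N\pi}$, so $\Re\gamma=p/N$. For the first identity, set $z:=j\gamma-n$. Then $\Re z=jp/N-n$, and the condition $0<\Re z<1$ becomes exactly $nN/p<j<(n+1)N/p$, which is precisely the hypothesis. With this choice, $z-\gamma/2=(j-1/2)\gamma-n$ and $z+\gamma/2=(j+1/2)\gamma-n$, so Lemma~\ref{lem:q_dilog} yields
\begin{equation*}
  \frac{E_N\bigl((j-1/2)\gamma-n\bigr)}{E_N\bigl((j+1/2)\gamma-n\bigr)}
  =
  1-e^{2\pi\i(j\gamma-n)}
  =
  1-e^{2j\gamma\pi\i},
\end{equation*}
since $n\in\Z$ makes $e^{-2\pi\i n}=1$.

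For the second identity, set $z:=n+1-j\gamma$. Then $\Re z=n+1-jp/N$, and $0<\Re z<1$ is again equivalent to $nN/p<j<(n+1)N/p$. A direct check gives $z-\gamma/2=n+1-(j+1/2)\gamma$ and $z+\gamma/2=n+1-(j-1/2)\gamma$, so Lemma~\ref{lem:q_dilog} gives
\begin{equation*}
  \frac{E_N\bigl(n+1-(j+1/2)\gamma\bigr)}{E_N\bigl(n+1-(j-1/2)\gamma\bigr)}
  =
  1-e^{2\pi\i(n+1-j\gamma)}
  =
  1-e^{-2j\gamma\pi\i},
\end{equation*}
using $e^{2\pi\i(n+1)}=1$.

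There is essentially no obstacle here: the proof is a bookkeeping exercise matching the arguments of $E_N$ to the form $z\pm\gamma/2$ of Lemma~\ref{lem:q_dilog} and verifying that the range condition on $j$ translates to the range $0<\Re z<1$. The only point requiring any attention is the sign flip in the exponent of the second identity, which arises because the roles of $z-\gamma/2$ and $z+\gamma/2$ get swapped when $z$ depends linearly on $-j\gamma$ rather than on $+j\gamma$.
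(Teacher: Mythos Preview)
Your proof is correct and follows exactly the same approach as the paper: set $z:=j\gamma-n$ for the first identity and $z:=n+1-j\gamma$ for the second, check that the hypothesis on $j$ is equivalent to $0<\Re z<1$ via $\Re\gamma=p/N$, and apply Lemma~\ref{lem:q_dilog}. The paper's version is simply terser, omitting the explicit verification of $e^{-2\pi\i n}=1$ and the computation of $z\pm\gamma/2$.
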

\begin{proof}
Since $\Re\gamma=p/N$, we have $0<\Re(j\gamma-n)<1$.
Therefore putting $z:=j\gamma-n$ in Lemma~\ref{lem:q_dilog}, we have the first equality.
Similarly, putting $z:=n+1-j\gamma$, we have the second equality.
\end{proof}
We prepare other two lemmas.
\begin{lem}\label{lem:gamma/2}
For a complex number $w$ with $|\Re{w}|<\Re\gamma$, we have
\begin{equation*}
  \frac{E_N\bigl(w+\gamma/2\bigr)}{E_N\bigl(w-\gamma/2+1\bigr)}
  =
  \frac{1-e^{2\pi\i w/\gamma}}{1-e^{2\pi\i w}}.
\end{equation*}
\end{lem}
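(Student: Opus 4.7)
The plan is to reduce the identity to two complementary shift functional equations for $T_N$: the shift-by-$\gamma$ relation (Lemma~\ref{lem:q_dilog}) and a dual shift-by-$1$ relation
\begin{equation*}
  T_N(z+1) - T_N(z) = -\log\bigl(1 + e^{2\pi\i z/\gamma}\bigr),
\end{equation*}
which I will obtain by mimicking the proof of Lemma~\ref{lem:q_dilog}. Combining the two by a telescoping decomposition then yields the lemma on an open sub-domain, and analytic continuation extends it to all of $\{|\Re{w}|<\Re\gamma\}$.

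First I would establish the shift-by-$1$ relation by computing
\begin{equation*}
  T_N(z+1) - T_N(z)
  =
  \frac{1}{4}\int_{\Rpath}\frac{e^{(2z-1)x}(e^{2x}-1)}{x\sinh(x)\sinh(\gamma x)}\,dx
  =
  \frac{1}{2}\int_{\Rpath}\frac{e^{2zx}}{x\sinh(\gamma x)}\,dx,
\end{equation*}
using $e^{2x}-1=2e^{x}\sinh(x)$ to cancel the $\sinh(x)$ in the denominator. The substitution $y:=\gamma x$ converts this into $\frac{1}{2}\int_{\gamma\Rpath}\frac{e^{2zy/\gamma}}{y\sinh(y)}\,dy$. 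Since $\gamma=p/N-\i u/(2N\pi)$ has small magnitude and argument close to zero, the contour $\gamma\Rpath$ lies in a narrow wedge around the real axis and is homotopic to $\Rpath$ in $\C$ minus the poles $y=k\pi\i$ ($k\in\Z\setminus\{0\}$) of $1/\sinh(y)$. After this deformation, Definition~\ref{def:L_k} and Lemma~\ref{lem:L_k} identify the integral as $-\L_1(z/\gamma+1/2)=-\log(1+e^{2\pi\i z/\gamma})$.

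To finish, decompose
\begin{equation*}
  T_N(w+\gamma/2) - T_N(w-\gamma/2+1)
  =
  \bigl[T_N(w+\gamma/2) - T_N(w-\gamma/2)\bigr]
  +
  \bigl[T_N(w-\gamma/2) - T_N(w-\gamma/2+1)\bigr].
\end{equation*}
Lemma~\ref{lem:q_dilog} with $z=w$ evaluates the first bracket to $-\log(1-e^{2\pi\i w})$, and the shift-by-$1$ relation with $z=w-\gamma/2$ evaluates the second to $\log\bigl(1+e^{2\pi\i(w-\gamma/2)/\gamma}\bigr)=\log(1-e^{2\pi\i w/\gamma})$. Summing and exponentiating yields the identity on the portion of $\{|\Re{w}|<\Re\gamma\}$ where the two applications are simultaneously valid, and since both sides of the claimed identity are holomorphic on the entire connected domain, analytic continuation extends it everywhere. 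The main obstacle is justifying the contour deformation from $\gamma\Rpath$ to $\Rpath$ in the derivation of the shift-by-$1$ relation: one must track decay of the integrand at infinity throughout the homotopy and confirm that no pole $k\pi\i$ is crossed, which relies essentially on the smallness of $|\gamma|=O(1/N)$.
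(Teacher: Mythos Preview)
Your argument is correct. The paper proceeds slightly differently: rather than telescoping through the intermediate value $T_N(w-\gamma/2)$, it computes $T_N(w+\gamma/2)-T_N(w-\gamma/2+1)$ directly as a single integral, uses the identity $\sinh\bigl((\gamma-1)t\bigr)=\sinh(\gamma t)\cosh t-\cosh(\gamma t)\sinh t$ to split the integrand, and arrives at
\[
  -\tfrac{1}{2}\L_1(w+1)-\tfrac{1}{2}\L_1(w)+\tfrac{1}{2}\L_1(w/\gamma+1)+\tfrac{1}{2}\L_1(w/\gamma)
\]
before exponentiating. Your telescoping decomposition is more modular---it exhibits the lemma as the composition of the two elementary shift relations (Lemma~\ref{lem:q_dilog} and what is in fact the paper's separately stated Lemma~\ref{lem:z_z+1}, which you rederive)---at the price of needing the intermediate point $T_N(w-\gamma/2)$ to be defined, which forces $\Re w>0$ and the analytic-continuation step you flag. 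The paper's direct route avoids that intermediate point, though its own splitting into four separate $\L_1$-integrals has an analogous convergence subtlety (the integral representations for $\L_1(w)$ and $\L_1(w+1)$ do not both converge for the same $w$), so it is implicitly relying on the same principle you make explicit.
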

\begin{proof}
By definition, we have
\begin{align*}
  &T_N(w+\gamma/2)-T_N(w-\gamma/2+1)
  \\
  =&
  \frac{1}{4}
  \int_{\Rpath}
  \frac{e^{(2w+\gamma-1)t}-e^{(2w-\gamma+1)t}}{t\sinh(t)\sinh(\gamma t)}\,dt
  \\
  =&
  \frac{1}{2}
  \int_{\Rpath}
  \frac{e^{2wt}\cosh(t)}{t\sinh(t)}\,dt
  -
  \frac{1}{2}
  \int_{\Rpath}
  \frac{e^{2wt}\cosh(\gamma t)}{t\sinh(\gamma t)}\,dt
  \\
  =&
  \frac{1}{4}\int_{\Rpath}
  \frac{e^{(2w+1)t}}{t\sinh(t)}\,dt
  +
  \frac{1}{4}\int_{\Rpath}
  \frac{e^{(2w-1)t}}{t\sinh(t)}\,dt
  \\
  &-
  \frac{1}{4}\int_{\Rpath}
  \frac{e^{(2w+\gamma)t}}{t\sinh(\gamma t)}\,dt
  -
  \frac{1}{4}\int_{\Rpath}
  \frac{e^{(2w-\gamma)t}}{t\sinh(\gamma t)}\,dt
  \\
  =&
  -\frac{1}{2}\L_1(w+1)-\frac{1}{2}\L_1(w)
  +\frac{1}{2}\L_1(w/\gamma+1)+\frac{1}{2}\L_1(w/\gamma).
\end{align*}
Taking the exponentials, we have the lemma from Lemma~\ref{lem:L_k}.
\end{proof}
\begin{lem}\label{lem:z_z+1}
For a complex number $z$ with $|\Re{z}|<\Re\gamma/2$, we have
\begin{equation*}
  \frac{E_{N}(z)}{E_{N}(z+1)}
  =
  1+e^{2\pi\i z/\gamma}.
\end{equation*}
\end{lem}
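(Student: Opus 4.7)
The approach parallels the proof of Lemma~\ref{lem:q_dilog}: compute the difference $T_N(z) - T_N(z+1)$ directly from the integral definition of $T_N$, reduce it to an $\L_1$-integral, and apply Lemma~\ref{lem:L_k} before exponentiating. The first step is the algebraic simplification
\begin{equation*}
  T_N(z) - T_N(z+1)
  =
  \frac{1}{4}\int_{\Rpath}\frac{e^{(2z-1)x}-e^{(2z+1)x}}{x\sinh(x)\sinh(\gamma x)}\,dx
  =
  -\frac{1}{2}\int_{\Rpath}\frac{e^{2zx}}{x\sinh(\gamma x)}\,dx,
\end{equation*}
where the cancellation of $\sinh(x)$ uses the identity $e^{(2z-1)x}-e^{(2z+1)x} = -2e^{2zx}\sinh(x)$.

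Next, I would identify this integral with $\L_1(z/\gamma + 1/2)$. Substituting $y = \gamma x$ transforms the right-hand side into
\begin{equation*}
  -\frac{1}{2}\int_{\gamma\Rpath}\frac{e^{(2(z/\gamma+1/2)-1)y}}{y\sinh(y)}\,dy,
\end{equation*}
which matches the integrand defining $\L_1(z/\gamma + 1/2)$ once the contour $\gamma\Rpath$ is deformed to $\Rpath$. The hypothesis $|\Re z| < \Re\gamma/2$ ensures that $\Re(z/\gamma + 1/2)$ lies in $(0,1)$, so Lemma~\ref{lem:L_k} applies and yields
\begin{equation*}
  \L_1(z/\gamma + 1/2) = \log\bigl(1 - e^{2\pi\i(z/\gamma + 1/2)}\bigr) = \log\bigl(1 + e^{2\pi\i z/\gamma}\bigr),
\end{equation*}
since $e^{\pi\i} = -1$. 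Exponentiating both sides of $T_N(z) - T_N(z+1) = \L_1(z/\gamma + 1/2)$ then gives the claimed identity.

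The main obstacle is justifying the contour deformation from $\gamma\Rpath$ to $\Rpath$. Since $\gamma = p/N - \i u/(2N\pi)$ has positive real part $p/N$ and only a small imaginary part, $\gamma\Rpath$ is a slight rotation of $\Rpath$ about the origin. The poles of $1/(y\sinh(y))$ lie on the imaginary axis at $y = k\pi\i$ ($k\in\Z$), and both contours enclose the double pole at $y=0$ from the upper side; a straight-line homotopy between them remains in a thin strip around the real axis away from all other imaginary poles. Decay of the integrand at infinity along the intermediate contours is controlled by the bound $|\Re(z/\gamma)| < 1/2$ implicit in the hypothesis. This is essentially the same contour manipulation used implicitly in the proof of Lemma~\ref{lem:gamma/2}, so the same justification applies here.
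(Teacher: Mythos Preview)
Your proof is correct and follows essentially the same approach as the paper's: compute $T_N(z)-T_N(z+1)$ from the integral definition, cancel $\sinh(x)$ in the numerator, substitute $s=\gamma x$, and identify the result with $\L_1(z/\gamma+1/2)$ before exponentiating via Lemma~\ref{lem:L_k}. You are more explicit than the paper about the contour deformation from $\gamma\Rpath$ to $\Rpath$, which the paper's proof leaves entirely implicit.
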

\begin{proof}
By definition, we have
\begin{align*}
  &T_{N}(z)-T_{N}(z+1)
  \\
  =&
  \frac{1}{4}\int_{\Rpath}
  \frac{e^{(2z-1)t}-e^{(2z+1)t}}{t\sinh(t)\sinh(\gamma t)}\,dt
  \\
  =&
  -\frac{1}{2}\int_{\Rpath}
  \frac{e^{2zt}}{t\sinh(\gamma t)}\,dt
  \\
  =&
  -\frac{1}{2}\int_{\gamma\Rpath}\frac{e^{2zs/\gamma}}{s\sinh(s)}\,ds
  \\
  =&
  \L_1(z/\gamma+1/2).
\end{align*}
Taking the exponentials, we get the lemma from Lemma~\ref{lem:L_k}.
\end{proof}

\section{Summation}\label{sec:summation}
In this section, we express $J_N\left(E;e^{\xi/N}\right)$ in terms of the quantum dilogarithm $T_N(z)$.
\par
We assume that $p$ and $N$ are coprime.
See Appendix for the case with $(p,N)\ne1$.
\par
If $k<N/p$, then from Corollary~\ref{cor:q_dilog} with $(j,n)=(N-l,p-1)$ and $(j,n)=(N+l,p)$, we have
\begin{align*}
  &\prod_{l=1}^{k}(1-e^{(N-l)\xi/N})(1-e^{(N+l)\xi/N})
  \\
  =&
  \prod_{l=1}^{k}(1-e^{2(N-l)\gamma\pi\i})(1-e^{2(N+l)\gamma\pi\i})
  \\
  =&
  \prod_{l=1}^{k}
  \frac{E_N\bigl((N-l-1/2)\gamma-p+1\bigr)}{E_N\bigl((N-l+1/2)\gamma-p+1\bigr)}
  \\
  &\times
  \prod_{l=1}^{k}
  \frac{E_N\bigl((N+l-1/2)\gamma-p\bigr)}{E_N\bigl((N+l+1/2)\gamma-p\bigr)}
  \\
  =&
  \frac{E_N\bigl((N-k-1/2)\gamma-p+1\bigr)}{E_N\bigl((N-1/2)\gamma-p+1\bigr)}
  \frac{E_N\bigl((N+1/2)\gamma-p\bigr)}{E_N\bigl((N+k+1/2)\gamma-p\bigr)}
  \\
  =&
  \frac{1-e^{4p\pi^2N/\xi}}{1-e^{\xi}}
  \times
  \frac{E_N\bigl((N-k-1/2)\gamma-p+1\bigr)}{E_N\bigl((N+k+1/2)\gamma-p\bigr)},
\end{align*}
where we use Lemma~\ref{lem:gamma/2} with $w=N\gamma-p$ in the last equality.
\par
Similarly, if $k$ satisfies $mN/p<k<(m+1)N/p$, then we have
\begin{align}\label{eq:qfac_E_N}
  &\prod_{l=1}^{k}(1-e^{(N-l)\xi/N})(1+e^{(N+l)\xi/N})
  \\
  =&
  \prod_{j=0}^{m-1}
  \left(
    \prod_{l=\floor{jN/p}+1}^{\floor{(j+1)N/p}}
    \frac{E_N\bigl((N-l-1/2)\gamma-p+j+1\bigr)}{E_N\bigl((N-l+1/2)\gamma-p+j+1\bigr)}
  \right.
  \notag
  \\
  &
  \left.
    \phantom{\prod_{j=0}^{m-1}\Bigl(}\times
    \prod_{l=\floor{jN/p}+1}^{\floor{(j+1)N/p}}
    \frac{E_N\bigl((N+l-1/2)\gamma-p-j\bigr)}{E_N\bigl((N+l+1/2)\gamma-p-j\bigr)}
  \right)
  \notag
  \\
  &
  \times
  \prod_{l=\floor{mN/p}+1}^{k}
  \frac{E_N\bigl((N-l-1/2)\gamma-p+m+1\bigr)}{E_N\bigl((N-l+1/2)\gamma-p+m+1\bigr)}
  \notag
  \\
  &
  \times
  \prod_{l=\floor{mN/p}+1}^{k}
  \frac{E_N\bigl((N+l-1/2)\gamma-p-m\bigr)}{E_N\bigl((N+l+1/2)\gamma-p-m\bigr)}
  \notag
  \\
  =&
  \prod_{j=0}^{m-1}
  \frac{E_N\bigl((N-\floor{(j+1)N/p}-1/2)\gamma-p+j+1\bigr)}
       {E_N\bigl((N-\floor{jN/p}-1/2)\gamma-p+j+1\bigr)}
  \notag
  \\
  &\times
  \prod_{j=0}^{m-1}
  \frac{E_N\bigl((N+\floor{jN/p}+1/2)\gamma-p-j\bigr)}
       {E_N\bigl((N+\floor{(j+1)N/p}+1/2)\gamma-p-j\bigr)}
  \notag
  \\
  &\times
  \frac{E_N\bigl((N-k-1/2)\gamma-p+m+1\bigr)}{E_N\bigl((N-\floor{mN/p}-1/2)\gamma-p+m+1\bigr)}
  \notag
  \\
  &\times
  \frac{E_N\bigl((N+\floor{mN/p}+1/2)\gamma-p-m\bigr)}{E_N\bigl((N+k+1/2)\gamma-p-m\bigr)}
  \notag
  \\
  =&
  \frac{1-e^{4pN\pi^2/\xi}}{1-e^{\xi}}
  \left(
    \prod_{j=1}^{m}
    \left(1-e^{4(p-j)N\pi^2/\xi}\right)\left(1-e^{4(p+j)N\pi^2/\xi}\right)
  \right)
  \notag
  \\
  &\times
  \frac{E_N\bigl((N-k-1/2)\gamma-p+m+1\bigr)}{E_N\bigl((N+k+1/2)\gamma-p-m\bigr)},
  \notag
\end{align}
where we use Lemma~\ref{lem:gamma/2} with $w=N\gamma-p$, and Lemma~\ref{lem:z_z+1} with $z=(N-\floor{lN/p}-1/2)\gamma-p+l$ and $z=(N+\floor{lN/p}+1/2)\gamma-p-l$ ($l=1,2,\dots,m$).
\begin{rem}
Since $\Re{\gamma}=p/N$, we have $\Re\bigl((N-\floor{lN/p}-1/2)\gamma-p+l\bigr)=-\frac{p}{N}\floor{\frac{lN}{p}}-\frac{p}{2N}+l$.
Since $lN/p$ is not an integer, we have $lN/p-1<\floor{lN/p}<lN/p$ (the equality $\floor{lN/p}=lN/p$ does not hold).
So $\left|\Re\bigl((N-\floor{lN/p}-1/2)\gamma-p+l\bigr)\right|<\Re{\gamma}/2$ and the assumption of Lemma~\ref{lem:z_z+1} holds.
\end{rem}
Therefore, from \eqref{eq:J_xi} we have
\begin{align}\label{eq:J_f_N}
  &J_N\left(E;e^{\xi/N}\right)
  \\
  =&
  \sum_{m=0}^{p-1}
  \sum_{mN/p<k<(m+1)N/p}
  e^{-k\xi}
  \prod_{l=1}^{k}\left(1-e^{(N+l)\xi/N}\right)\left(1-e^{(N-l)\xi/N}\right)
  \notag
  \\
  =&
  \frac{1-e^{4pN\pi^2/\xi}}{1-e^{\xi}}
  \sum_{m=0}^{p-1}
  \left(
    \prod_{j=1}^{m}
    \left(1-e^{4(p-j)N\pi^2/\xi}\right)\left(1-e^{4(p+j)N\pi^2/\xi}\right)
  \right.
  \notag
  \\
  &\times
  \left.\vphantom{\sum_{m=0}^{p-1}}
    \sum_{mN/p<k<(m+1)N/p}
    e^{-k\xi}
    \frac{E_N\bigl((N-k-1/2)\gamma-p+m+1\bigr)}{E_N\bigl((N+k+1/2)\gamma-p-m\bigr)}
  \right)
  \notag
  \\
  =&
  \frac{1-e^{-4pN\pi^2/\xi}}{2\sinh(u/2)}
  \notag
  \\
  &\quad\times
  \sum_{m=0}^{p-1}
  \left(
    \beta_{p,m}
    \sum_{mN/p<k<(m+1)N/p}
    \exp
    \left(
      N\times
      f_{N}\left(\frac{2k+1}{2N}-\frac{2m\pi\i}{\xi}\right)
    \right)
  \right),
  \notag
\end{align}
where we put
\begin{align}
  \beta_{p,m}
  &:=
  e^{-4mpN\pi^2/\xi}
  \prod_{j=1}^{m}
  \left(1-e^{4(p-j)N\pi^2/\xi}\right)\left(1-e^{4(p+j)N\pi^2/\xi}\right),
  \notag
  \\
  f_{N}(z)
  &:=
  \frac{1}{N}T_N\left(\frac{\xi(1-z)}{2\pi\i}-p+1\right)
  -
  \frac{1}{N}T_N\left(\frac{\xi(1+z)}{2\pi\i}-p\right)
  \label{eq:f_N}
  \\
  &\quad
  -uz+\frac{4p\pi^2}{\xi}.
  \notag
\end{align}
\begin{rem}
Since we have
\begin{equation*}
  \Re\left(\frac{\xi(1\pm z)}{2\pi\i}\right)
  =
  p(1\pm\Re{z})\pm\frac{u}{2\pi}\Im{z},
\end{equation*}
the function $f_{N}(z)$ is defined in the region
\begin{equation*}
  \left\{
    z\in\C\Bigm|-\frac{1}{2N}<\frac{u}{2p\pi}\Im{z}+\Re{z}<\frac{1}{p}+\frac{1}{2N}
  \right\}
\end{equation*}
from Lemma~\ref{lem:T_N_convergence}.
\end{rem}
\section{Approximation}\label{sec:approximation}
In the previous section, we express $J_N\left(E;e^{\xi/N}\right)$ as a sum of the function $f_N(z)$.
In this section, we approximate it by using a function that does not depend on $N$.
\par
Since $T_N(z)/N$ uniformly converges to $\Li_2\left(e^{2\pi\i z}\right)/\xi$ (Lemma~\ref{lem:T_N_L_2}), $f_{N}(z)$ uniformly converges to
\begin{equation*}
  F(z)
  :=
  \frac{1}{\xi}\Li_2\left(e^{\xi(1-z)}\right)
  -
  \frac{1}{\xi}\Li_2\left(e^{\xi(1+z)}\right)
  -uz+\frac{4p\pi^2}{\xi}
\end{equation*}
in the region
\begin{equation}\label{eq:T_N_f_N_converge}
  \left\{
    z\in\C
    \Bigm|
    \frac{\nu}{p}\le\Re{z}+\frac{u}{2p\pi}\Im{z}\le\frac{1}{p}-\frac{\nu}{p},
    \left|\Re{z}-\frac{2p\pi}{u}\Im{z}\right|\le\frac{2M\pi}{u}+1
  \right\}.
\end{equation}
By using the identity \eqref{eq:dilog}, if $z$ is in the region
\begin{equation*}
  U_0
  :=
  \left\{
    z\in\C
    \Bigm|
    0<\Re{z}+\frac{u}{2p\pi}\Im{z}<\frac{1}{p}
  \right\},
\end{equation*}
we have
\begin{align*}
  \Li_2\left(e^{\xi(1-z)}\right)
  &=
  -\Li_2\left(e^{-\xi(1-z)}\right)-\frac{\pi^2}{6}
  -\frac{1}{2}\left(\log\left(-e^{-\xi(1-z)}\right)\right)^2
  \\
  &=
  -\Li_2\left(e^{-\xi(1-z)}\right)-\frac{\pi^2}{6}
  -\frac{1}{2}(-\xi(1-z)+(2p-1)\pi\i)^2
\end{align*}
since $\Im\xi(1-z)=2p\pi-(uy+2p\pi x)$.
Similarly, we have
\begin{align*}
  \Li_2\left(e^{\xi(1+z)}\right)
  &=
  -\Li_2\left(e^{-\xi(1+z)}\right)-\frac{\pi^2}{6}
  -\frac{1}{2}\left(\log\left(-e^{-\xi(1+z)}\right)\right)^2
  \\
  &=
  -\Li_2\left(e^{-\xi(1+z)}\right)-\frac{\pi^2}{6}
  -\frac{1}{2}(-\xi(1+z)+(2p+1)\pi\i)^2
\end{align*}
since $\Im\xi(1+z)=2p\pi+(uy+2p\pi x)$.
Therefore, $F(z)$ can also be written as
\begin{equation*}
  F(z)
  =
  \frac{1}{\xi}\Li_2\left(e^{-\xi(1+z)}\right)
  -
  \frac{1}{\xi}\Li_2\left(e^{-\xi(1-z)}\right)
  +uz-2\pi\i
\end{equation*}
in $U_0$.
\par
The first derivative of $F(z)$ is
\begin{equation}\label{eq:F'}
  \frac{d}{d\,z}F(z)
  =
  \log\left(1-e^{-u-\xi z}\right)
  +
  \log\left(1-e^{-u+\xi z}\right)
  +u
  =
  \log\left(e^u+e^{-u}-e^{\xi z}-e^{-\xi z}\right)
\end{equation}
because $-\pi<\arg\left(1-e^{-u-\xi z)}\right)+\arg\left(1-e^{-u+\xi z)}\right)<\pi$ when $u$ is real from the lemma below.
Here we choose the branch of $\arg$ so that $-\pi<\arg\zeta\le\pi$ for any $\zeta\in\C$.
Note that $e^{\pm\xi z}\in\R$ if and only if $\Im(\xi z)=u\Im{z}+2p\pi\Re{z}=2k\pi$ for some $k\in\Z$, which implies that if $z\in U_0$ then $e^{\pm\xi z}\not\in\R$.
\begin{lem}
Let $a$ be a positive real number, and $w$ be a complex number with $w\not\in\R$.
Then we have $-\pi<\arg(1-aw)+\arg(1-aw^{-1})<\pi$.
\end{lem}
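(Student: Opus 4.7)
The plan is to exploit the fact that $a$ is real and positive, so multiplying by $a$ preserves signs of imaginary parts, and hence to locate $1-aw$ and $1-aw^{-1}$ in complementary open half-planes.

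By complex conjugation symmetry (both $\arg(1-aw)$ and $\arg(1-aw^{-1})$ simply flip sign when $w$ is replaced by $\bar w$), I may assume without loss of generality that $\Im w>0$. Then I would compute $\Im(1-aw)=-a\Im w<0$, so that $1-aw$ lies in the open lower half-plane and therefore $\arg(1-aw)\in(-\pi,0)$. For the second factor, the identity $w^{-1}=\bar w/|w|^2$ gives $\Im(w^{-1})=-\Im w/|w|^2<0$, so $\Im(1-aw^{-1})=-a\Im(w^{-1})>0$; thus $1-aw^{-1}$ lies in the open upper half-plane and $\arg(1-aw^{-1})\in(0,\pi)$.

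Summing these two inclusions yields
\begin{equation*}
  \arg(1-aw)+\arg(1-aw^{-1})\in(-\pi,0)+(0,\pi)\subset(-\pi,\pi),
\end{equation*}
which is the desired conclusion. The case $\Im w<0$ follows by swapping the roles of the two factors, with $\arg(1-aw)\in(0,\pi)$ and $\arg(1-aw^{-1})\in(-\pi,0)$ instead.

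There is essentially no obstacle here: the only subtlety is to verify the strictness of the inequalities, which follows immediately from $w\notin\R$ (ruling out zero imaginary parts and also ensuring that neither $1-aw$ nor $1-aw^{-1}$ vanishes, since $a\in\R$). The positivity of $a$ is used only to preserve the sign of $\Im w$ under multiplication by $a$; the argument would fail for complex $a$, which is why this sharper bound relies crucially on the assumption that $u$ is real in the application to \eqref{eq:F'}.
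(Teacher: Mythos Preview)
Your proof is correct and follows essentially the same approach as the paper's own proof: reduce to $\Im w>0$ by symmetry, then observe that $1-aw$ lies in the open lower half-plane and $1-aw^{-1}$ in the open upper half-plane, so their arguments sum to something in $(-\pi,\pi)$. You have simply spelled out the imaginary-part computations that the paper leaves implicit.
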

\begin{proof}
We may assume that $\Im{w}>0$ without loss of generality.
Then we can easily see that $-\pi<\arg(1-aw)<0$ and that $0<\arg(1-aw^{-1})<\pi$, which implies the result.
\end{proof}
\par
The second derivative of $F(z)$ equals
\begin{equation*}
  \frac{d^2}{d\,z^2}F(z)
  =
  \frac{\xi\left(e^{-\xi z}-e^{\xi z}\right)}{e^{u}+e^{-u}-e^{\xi z}-e^{-\xi z}}.
\end{equation*}
Now, define
\begin{equation}\label{eq:varphi}
  \varphi(u)
  :=
  \log
  \left(
    \cosh{u}-\frac{1}{2}-\frac{1}{2}\sqrt{(2\cosh{u}+1)(2\cosh{u}-3)}
  \right),
\end{equation}
where we take the square root as a positive multiple of $\i$, recalling that $\cosh{u}<3/2$.
Note that $\varphi(u)$ satisfies the equality
\begin{equation*}
  e^{u}+e^{-u}-e^{\varphi(u)}-e^{-\varphi(u)}=1.
\end{equation*}
\begin{lem}
If $0<u<\kappa=\arccosh(3/2)$, then $\varphi(u)$ is purely imaginary with $-\pi/3<\Im\varphi(u)<0$.
\end{lem}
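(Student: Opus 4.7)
The plan is to show that the argument of the logarithm in \eqref{eq:varphi} is a complex number of modulus $1$ lying in the open fourth quadrant within a $60^\circ$ sector below the positive real axis. Since $\log$ of a unit-modulus number is purely imaginary with imaginary part equal to its argument, both claims will follow at once.

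First I would reduce to a concrete computation. For $0<u<\kappa$ we have $1<\cosh u<3/2$, so $(2\cosh u+1)(2\cosh u-3)<0$, and the prescription that the square root is a positive multiple of $\i$ gives
\begin{equation*}
  \sqrt{(2\cosh u+1)(2\cosh u-3)}
  = \i\sqrt{(2\cosh u+1)(3-2\cosh u)}.
\end{equation*}
Setting $a(u):=\cosh u-\tfrac12$ and $b(u):=\tfrac12\sqrt{(2\cosh u+1)(3-2\cosh u)}$, both strictly positive on $(0,\kappa)$, the argument of the logarithm is $a(u)-\i\,b(u)$.

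Next I would verify that $a(u)^2+b(u)^2=1$. A direct expansion gives
\begin{align*}
  a^2+b^2
  &= \cosh^2 u-\cosh u+\tfrac14+\tfrac14\bigl(-4\cosh^2 u+4\cosh u+3\bigr) \\
  &= 1,
\end{align*}
so $|a-\i b|=1$. Consequently $\varphi(u)=\log(a-\i b)=\i\arg(a-\i b)$ is purely imaginary, and since $a>0$ and $b>0$ we have $\arg(a-\i b)\in(-\pi/2,0)$, giving $\Im\varphi(u)<0$.

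For the lower bound $\Im\varphi(u)>-\pi/3$ it suffices to show $b(u)/a(u)<\tan(\pi/3)=\sqrt{3}$, i.e.\ $3a^2-b^2>0$. The same kind of expansion yields
\begin{equation*}
  3a^2-b^2
  = 3\bigl(\cosh^2 u-\cosh u+\tfrac14\bigr)-\tfrac14\bigl(-4\cosh^2 u+4\cosh u+3\bigr)
  = 4\cosh u(\cosh u-1),
\end{equation*}
which is strictly positive for $u>0$. Thus $\arg(a-\i b)\in(-\pi/3,0)$, completing the proof. The steps are all elementary algebra; the only subtlety, and the one thing to double-check carefully, is the sign convention for the square root fixed just after \eqref{eq:varphi}, which is what places $\varphi(u)$ in the lower half plane rather than the upper half.
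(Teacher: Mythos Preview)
Your proof is correct and takes essentially the same approach as the paper. The paper's version is marginally slicker in that it observes $e^{\varphi(u)}$ is a root of the monic quadratic $x^{2}-(2\cosh u-1)x+1=0$ (hence of modulus~$1$, the roots being complex conjugates) and then reads off $\cos\theta=\cosh u-\tfrac12\in(\tfrac12,1)$ directly, but this is just a repackaging of your explicit computations $a^{2}+b^{2}=1$ and $3a^{2}-b^{2}>0$.
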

\begin{proof}
First note that $e^{\varphi(u)}$ is a solution to the following quadratic equation:
\begin{equation*}
  x^2-(2\cosh{u}-1)x+1=0.
\end{equation*}
Therefore $\left|e^{\varphi(u)}\right|=1$ and we conclude that $\varphi(u)$ is purely imaginary.
Put $\theta:=\Im\varphi(u)$.
\par
Since $0<u<\kappa$, we see that $1<2\cosh{u}-1<2$.
Then since $e^{-\theta\i}$ is the other solution to the quadratic equation above, we have $2\cos\theta=2\cosh{u}-1$.
Therefore we see that $-\pi/3<\theta<0$ because the argument of $\log$ in \eqref{eq:varphi} is in the fourth quadrant.
\end{proof}
\par
As in the proof above, we put $\theta:=\Im\varphi(u)$.
We also put $\sigma_0:=\frac{(\theta+2\pi)\i}{\xi}$.
Since we have
\begin{equation*}
  \Re\sigma_0+\frac{u}{2p\pi}\Im\sigma_0
  =
  \frac{\theta+2\pi}{2p\pi}
\end{equation*}
and $0>\theta>-\pi/3$, we see that $\sigma_0\in U_0$.
\par
We have
\begin{equation*}
  \frac{d}{d\,z}F(\sigma_0)
  =
  \log\left(e^u+e^{-u}-e^{\varphi(u)}-e^{-\varphi(u)}\right)
  =0.
\end{equation*}
We also have
\begin{equation*}
  \frac{d^2}{d\,z^2}F(\sigma_0)
  =
  \xi\sqrt{(2\cosh{u}+1)(2\cosh{u}-3)}.
\end{equation*}
Therefore we conclude that $F(z)$ is of the form
\begin{equation}\label{eq:F_Taylor}
  F(z)
  =
  F(\sigma_0)
  +a_2(z-\sigma_0)^2
  +a_3(z-\sigma_0)^3
  +a_4(z-\sigma_0)^4+\cdots
\end{equation}
with $a_2:=\frac{1}{2}\xi\sqrt{(2\cosh{u}+1)(2\cosh{u}-3)}$.
\par
Now, the sum
\begin{equation}\label{eq:sum_f_N}
  \sum_{m/p<k/N<(m+1)/p}
  \exp\left(N\times f_N\left(\frac{2k+1}{2N}-\frac{2m\pi\i}{\xi}\right)\right)
\end{equation}
can be approximate by the sum
\begin{equation*}
  \sum_{m/p<k/N<(m+1)/p}
  \exp\left(N\times\Phi_m\left(\frac{2k+1}{2N}\right)\right),
\end{equation*}
where we put
\begin{equation*}
  \Phi_m(z)
  :=
  F\left(z-\frac{2m\pi\i}{\xi}\right).
\end{equation*}
Moreover, in the next section we approximate the sum \eqref{eq:sum_f_N} by the integral $N\int_{m/p}^{(m+1)/p}e^{N\Phi_m(z)}\,dz$.
\par
Note that the function $\Phi_m(z)$ is defined in the region
\begin{equation*}
  U_m
  :=
  \left\{
    z\in\C
    \Bigm|
    \frac{m}{p}<\Re{z}+\frac{u}{2p\pi}\Im{z}<\frac{m+1}{p}
  \right\}.
\end{equation*}
Put $\sigma_m:=\sigma_0+\frac{2m\pi\i}{\xi}$.
Then we see that
\begin{equation*}
  \Re{\sigma_m}+\frac{u}{2p\pi}\Im{\sigma_m}
  =
  \Re{\sigma_0}+\frac{u}{2p\pi}\Im{\sigma_0}+\frac{m}{p}
  =
  \frac{\theta+2(m+1)\pi}{2p\pi},
\end{equation*}
and so we have $\sigma_m\in U_m$.
From \eqref{eq:F_Taylor}, we conclude that $\Phi_m(z)$ is of the form
\begin{equation}\label{eq:Phi_Taylor}
  \Phi_m(z)
  =
  F(\sigma_0)
  +a_2(z-\sigma_m)^2
  +a_3(z-\sigma_m)^3
  +a_4(z-\sigma_m)^4+\cdots.
\end{equation}
\section{The Poisson summation formula}\label{sec:Poisson}
First of all, note that the function $f_N\left(z-\frac{2m\pi\i}{\xi}\right)$ uniformly converges to $\Phi_m(z)$ in the region
\begin{equation}\label{eq:Phi_converge}
  \left\{
    z\in\C
    \Bigm|
    \frac{m}{p}+\frac{\nu}{p}\le\Re{z}+\frac{u}{2p\pi}\Im{z}\le\frac{m+1}{p}-\frac{\nu}{p},
    \left|\Re{z}-\frac{2p\pi}{u}\Im{z}\right|\le\frac{2M\pi}{u}+1
  \right\}
\end{equation}
from \eqref{eq:T_N_f_N_converge}.
So we expect that the sum \eqref{eq:sum_f_N} is approximated by the integral $N\int_{m/p}^{(m+1)p}e^{N\Phi_m(z)}\,dz$ by using the Poisson summation formula \cite[Proposition~4.2]{Ohtsuki:QT2016}.
To do that we will show the following proposition, which confirms the assumption of \cite[Proposition~4.2]{Ohtsuki:QT2016}.
\begin{prop}\label{prop:Phi}
Let $m$ be an integer with $0\le m\le p-1$.
Put $b_m^{-}:=m/p+\nu/p$ and $b_m^{+}:=(m+1)/p-\nu/p$.
\par
Define
\begin{align*}
  B_m
  &:=
  \left\{\frac{k}{N}\in\R\Bigm|k\in\Z,b_m^{-}\le\frac{k}{N}\le b_m^{+}\right\},
  \\
  C_m
  &:=
  \{t\in\R\mid b_m^{-}\le t\le b_m^{+}\},
  \\
  D_m
  &:=
  \{z\in\C\mid\Re{\Phi_m(z)}<\Re{\Phi_m(\sigma_m)}\},
  \\
  E_m
  &:=
  \{z\in\C\mid b_m^{-}\le\Re{z}\le b_m^{+},|\Im{z}|\le2\Im\sigma_m\}\cap U_m
\end{align*}
Then the following hold.
\begin{enumerate}
\item
The region $E_m$ contains $\sigma_m$ and $\Phi_m(z)$ is a holomorphic function in $E_m$ of the form
\begin{equation*}
  F(\sigma_0)+a_2(z-\sigma_m)^2+a_3(z-\sigma_m)^3+a_4(z-\sigma_m)^4+\cdots
\end{equation*}
with $\Re{a_2}<0$.
\item
$D_m\cap E_m$ has two connected components.
\item
$b_m^{+}$ and $b_m^{-}$ are in different components of $D_m\cap E_m$ and moreover $\Re{\Phi_m(b_m^{\pm})}<\Re{\Phi_m(\sigma_m)}-\varepsilon_{m}$ for some $\varepsilon_{m}>0$.
\item
Both $b_m^{+}$ and $b_m^{-}$ are in a connected component of
\begin{multline*}
  \overline{R}_m
  :=
  \{x+y\i\in\C\mid b_m^{-}\le x\le b_m^{+},
  \\
  y\in[0,2\Im\sigma_m],\Re{\Phi_m(x+y\i)}<\Re{\Phi_m(\sigma_m)}+2\pi y\}
  \cap U_m.
\end{multline*}
\item
Both $b_m^{+}$ and $b_m^{-}$ are in a connected component of
\begin{multline*}
  \underline{R}_m
  :=
  \{x-y\i\in\C\mid b_m^{-}\le x\le b_m^{+},
  \\
  y\in[0,2\Im\sigma_m],\Re{\Phi_m(x-y\i)}<\Re{\Phi_m(\sigma_m)}+2\pi y\}
  \cap U_m.
\end{multline*}
\end{enumerate}
\end{prop}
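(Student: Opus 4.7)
For (1), because $\sigma_m=\sigma_0+2m\pi i/\xi$ and the linear functional $\ell(z):=\Re z+\frac{u}{2p\pi}\Im z$ evaluates to $\ell(\sigma_m)=(\theta+2(m+1)\pi)/(2p\pi)$ with $-\pi/3<\theta<0$, the point $\sigma_m$ lies strictly inside $(m/p,(m+1)/p)$, and hence in $(b_m^-,b_m^+)$ once $\nu$ is small enough. A direct computation writing $\sigma_m=(\theta+2(m+1)\pi)(2p\pi+ui)/(u^2+4p^2\pi^2)$ gives $\Im\sigma_m>0$, so $|\Im\sigma_m|\le 2\Im\sigma_m$ is automatic. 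Holomorphicity of $\Phi_m$ on $E_m$ follows from $E_m\subset U_m$ together with the observation that the arguments of the dilogarithms in $F$ avoid the branch cut $(1,\infty)$ throughout $U_m$. The Taylor expansion \eqref{eq:F_Taylor} was derived in Section~\ref{sec:approximation}; combining $a_2=\frac{1}{2}\xi\sqrt{(2\cosh u+1)(2\cosh u-3)}$ with the convention that the square root is a positive imaginary number gives $\Re a_2=-p\pi\sqrt{(2\cosh u+1)(3-2\cosh u)}<0$ since $0<u<\kappa$.

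For (2) and (3), I would study the level set $L:=\{z\in E_m\mid\Re\Phi_m(z)=\Re\Phi_m(\sigma_m)\}$. Since $\Re a_2<0$, near $\sigma_m$ the set $L$ consists of two smooth arcs crossing transversally at $\sigma_m$, partitioning a neighborhood into four alternating sectors for the sign of $\Re\Phi_m-\Re\Phi_m(\sigma_m)$. Continuing these arcs until they exit $E_m$, and using the explicit form $F'(\zeta)=\log(e^u+e^{-u}-e^{\xi\zeta}-e^{-\xi\zeta})$ to control the geometry of $L$ in $E_m$ (in particular to exclude spurious self-intersections), one concludes that $D_m\cap E_m$ has exactly two components. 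To place $b_m^\pm$ in distinct components, I would verify strict inequality $\Re\Phi_m(b_m^\pm)<\Re\Phi_m(\sigma_m)$ at the real endpoints, together with monotonicity of $\Re\Phi_m(t)$ on each of the intervals $[b_m^-,\Re\sigma_m]$ and $[\Re\sigma_m,b_m^+]$, obtained by differentiating via $F'$. Compactness of $[b_m^-,b_m^+]$ then upgrades the strict inequality to $\Re\Phi_m(b_m^\pm)<\Re\Phi_m(\sigma_m)-\varepsilon_m$ for some $\varepsilon_m>0$.

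For (4) and (5), the slack $2\pi y$ in the defining inequality leaves considerable room, and I would exhibit an explicit rectangular path $b_m^-\to b_m^-+2\Im\sigma_m\cdot i\to b_m^++2\Im\sigma_m\cdot i\to b_m^+$ inside $\overline{R}_m$ (and a reflected path inside $\underline{R}_m$). On the bottom edge ($y=0$) the required bound is exactly statement~(3); on the two vertical edges one integrates $\partial_y\Re\Phi_m(x+yi)=-\Im\Phi_m'(x+yi)$ in $y$, using a uniform bound $|\Im F'|\le C<2\pi$ on $E_m$ together with the strict inequality at the endpoints to carry the inequality upward; and on the top edge the accumulated slack $2\pi\cdot 2\Im\sigma_m$ dominates any growth of $\Re\Phi_m$. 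This produces a connected subset of $\overline{R}_m$ containing both $b_m^\pm$, and the argument for $\underline{R}_m$ is symmetric.

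The main obstacle will be the global control of the complex argument of $e^u+e^{-u}-e^{\xi z}-e^{-\xi z}$ as $z$ traverses $E_m$: the monotonicity claim in (3), the uniform estimate $|\Im F'|<2\pi$ in (4)--(5), and the exclusion of spurious components of $L$ in (2) all reduce to showing that this quantity stays clear of the negative real axis with good quantitative bounds. I expect this to follow from the slab constraint $\ell(z)\in(m/p,(m+1)/p)$ together with $0<u<\kappa$, but verifying it uniformly across the full rectangle $[b_m^-,b_m^+]\times[-2\Im\sigma_m,2\Im\sigma_m]$ is the genuinely delicate quantitative step.
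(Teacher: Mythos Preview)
Your treatment of (1) matches the paper's and is fine.

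For (2) and (3) your plan is much vaguer than what is actually needed, and the specific mechanism you propose for (3) --- monotonicity of $\Re\Phi_m(t)$ along the real segments $[b_m^-,\Re\sigma_m]$ and $[\Re\sigma_m,b_m^+]$ --- is neither established nor, as far as I can see, true in general. The paper does not argue along the real axis at all. Instead it computes the \emph{vertical} derivative $\partial_y\Re\Phi_m(x+y\i)=-\arg\bigl(2\cosh u-2\cosh(\xi(x+y\i))\bigr)$ and determines its sign region by region, the regions being cut out by the lines $L_\sigma:\;ux=2p\pi y$ and $L_M:\;\Re z+\frac{u}{2p\pi}\Im z=\frac{2m+1}{2p}$. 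It then proves (via two separate lemmas requiring genuine dilogarithm estimates) that three specific segments --- one on $L_\sigma$, one on $L_E$, one on $L_M$ --- lie in $D_m$, and uses vertical monotonicity to slide every point of $D_m\cap E_m$ onto one of these segments. That is what gives exactly two components and places $b_m^\pm$ in distinct ones. Your ``continue the level arcs and exclude spurious self-intersections'' does not substitute for this; the exclusion of extra components is precisely the content of the segment-by-segment analysis.

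For (4) and (5) there is a concrete geometric obstruction to your rectangular path. The region $E_m$ is not the rectangle $[b_m^-,b_m^+]\times[-2\Im\sigma_m,2\Im\sigma_m]$ but its intersection with the slab $U_m$; two opposite corners are sheared off by the lines $L_E$ and $L_W$, leaving a hexagon. In particular the point $b_m^{+}+2\Im\sigma_m\,\i$ satisfies $\Re z+\frac{u}{2p\pi}\Im z=\frac{m+1}{p}-\frac{\nu}{p}+\frac{u\Im\sigma_m}{p\pi}$, which exceeds $\frac{m+1}{p}$ once $\nu$ is small, so your top edge leaves $U_m$ (where $\Phi_m$ is not defined) before reaching $b_m^+$. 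The mirror issue kills the rectangular path for $\underline{R}_m$ at the bottom-left. The paper avoids this by choosing paths adapted to the hexagon: for $\overline{R}_m$ it runs from $b_m^-$ up to $L_\sigma$, along $L_\sigma$ through $\sigma_m$ to $L_E$, and back down to $b_m^+$ (the segment on $L_\sigma$ was already shown to lie in $D_m$, so the inequality with slack $2\pi y$ is immediate there); for $\underline{R}_m$ it follows the lower boundary $\overline{P_0P_1P_2P_3}$ of the hexagon and needs the bound $|\partial_y\Re\Phi_m|<\pi$ only on the portion $\overline{P_{12}P_2}$ of $\underline{H}$.

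Your final paragraph correctly identifies that the argument of $2\cosh u-2\cosh(\xi z)$ is the crux, but the paper's point is that the sign pattern of $\Im\tau$ is determined \emph{exactly} by which side of $L_\sigma$ and $L_M$ one sits on, not by a coarse uniform bound; this is what makes the region-by-region argument go through.
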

\par
See Figure~\ref{fig:Poisson} for a contour plot of $\Re{\Phi_m(z)}$ with $p=3$, $m=2$, and $u=0.5$.
\begin{figure}[h]
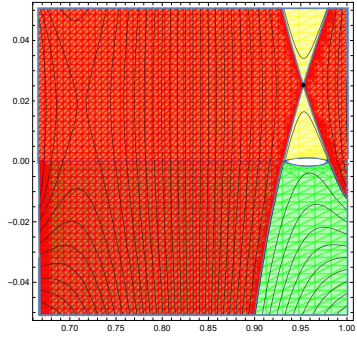

\pic{0.5}{Poisson}
\caption{A contour plot of $\Re{\Phi_m(z)}$ in $E_m$ by Mathematica for $p=3$, $m=2$, and $u=0.5$.
The region $\overline{R}_m$ ($\underline{R}_m$, respectively) is indicated by yellow (green, respectively).
The region $D_m$ is indicated by red, which overwrites a part of $\overline{R}_m\cup\underline{R}_m$.}
\label{fig:Poisson}
\end{figure}
\par
Before we give a proof, let us define several lines as indicated in Figure~\ref{fig:U_m}.
\begin{figure}[h]
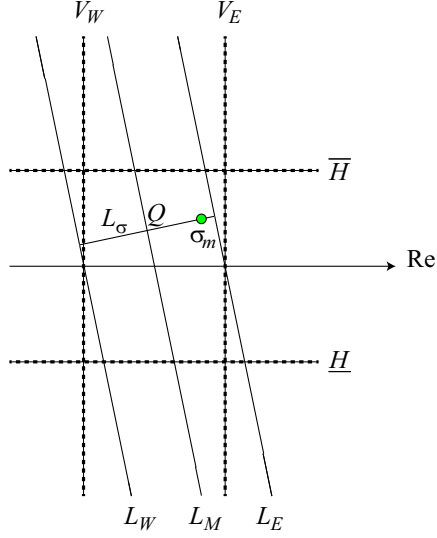

\pic{0.4}{U_m}
\caption{The region $U_m$ is between $L_E$ and $L_W$.}
\label{fig:U_m}
\end{figure}
\begin{align*}
  L_{\sigma}&:\Re{z}-\frac{2p\pi}{u}\Im{z}=0,
  \\
  L_E&:\Re{z}+\frac{u}{2p\pi}\Im{z}=\frac{m+1}{p},
  \\
  L_M&:\Re{z}+\frac{u}{2p\pi}\Im{z}=\frac{2m+1}{2p},
  \\
  L_W&:\Re{z}+\frac{u}{2p\pi}\Im{z}=\frac{m}{p},
  \\
  \overline{H}&:\Im{z}=2\Im\sigma_m,
  \\
  \underline{H}&:\Im{z}=-2\Im\sigma_m,
  \\
  V_E&:\Re{z}=\frac{m+1}{p},
  \\
  V_W&:\Re{z}=\frac{m}{p}.
\end{align*}
\begin{figure}[h]
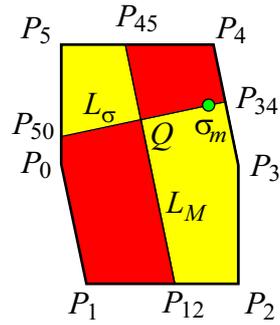

\pic{0.5}{hexagon}
\caption{The region $E_m$.}
\label{fig:hexagon}
\end{figure}
Note that $E_m$ is the hexagonal region surrounded by $\overline{H}$, $L_E$, $V_E$, $\underline{H}$, $L_W$, and $V_W$.
Strictly speaking, we need to push $L_E$ and $L_W$ slightly inside.
We name the vertices of its boundary as indicated in Figure~\ref{fig:hexagon}.
Their coordinates are given as:
\begin{align*}
  P_{0}&:
  \frac{m}{p},
  \\
  P_{1}&:
  \frac{m}{p}+\frac{\overline{\xi}}{p\pi}\Im\sigma_m,
  \\
  P_{2}&:
  \frac{m+1}{p}-2\Im\sigma_m\i,
  \\
  P_{3}&:
  \frac{m+1}{p},
  \\
  P_{4}&:
  \frac{m+1}{p}-\frac{\overline{\xi}}{p\pi}\Im\sigma_m,
  \\
  P_{5}&:
  \frac{m}{p}+2\Im\sigma_m\i,
\end{align*}
where $\overline{\xi}$ is the complex conjugate of $\xi$.
\par
We also put $P_{12}:=L_M\cap\underline{H}$, $P_{34}:=L_E\cap L_{\sigma}$, $P_{45}:=L_M\cap\overline{H}$, and $P_{50}:=L_W\cap L_{\sigma}$.
Their coordinates are given as follows.
\begin{align*}
  P_{12}&:
  \frac{2m+1}{2p}+\frac{\overline{\xi}}{p\pi}\Im\sigma_m,
  \\
  P_{34}&:
  \frac{2(m+1)\pi\i}{\xi},
  \\
  P_{45}&:
  \frac{2m+1}{2p}-\frac{\overline{\xi}}{p\pi}\Im\sigma_m,
  \\
  P_{50}&:
  \frac{m\overline{\xi}\i}{2p^2\pi}.
\end{align*}
We use the following lemmas in the proof of Proposition~\ref{prop:Phi} below.
\begin{lem}\label{lem:F_sigma}
We have the inequalities $0<\Re{F(0)}<\Re{F(\sigma_0)}$.
\end{lem}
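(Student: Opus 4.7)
The plan is to evaluate $F(0)$ and $F(\sigma_0)$ in closed form, reduce $\Re F(\sigma_0)-\Re F(0)>0$ to a one-variable inequality $g(u)>0$ on $(0,\kappa)$, and then prove the latter by a single monotonicity argument.

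\textbf{Step 1 ($\Re F(0)>0$).} At $z=0$ we have $e^{\xi(1\pm 0)}=e^{\xi}=e^{u}$ since $e^{2p\pi\i}=1$, so the two dilogarithms in the original definition of $F$ cancel and $F(0)=4p\pi^2/\xi$. Taking real parts, $\Re F(0)=4p\pi^2 u/(u^2+4p^2\pi^2)>0$.

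\textbf{Step 2 (Reduction of $\Re F(\sigma_0)>\Re F(0)$).} Since $\sigma_0\in U_0$, I would use the alternative formula for $F$ on $U_0$. Writing $\varphi(u)=\i\theta$ and $\xi\sigma_0=(\theta+2\pi)\i$ one finds $e^{-\xi(1\pm\sigma_0)}=e^{-u\mp\i\theta}$; since $|e^{-u\pm\i\theta}|=e^{-u}<1$, the conjugation relation $\Li_2(\bar w)=\overline{\Li_2(w)}$ yields
\[
F(\sigma_0)=\tfrac{1}{\xi}\bigl[\Li_2(e^{-u-\i\theta})-\Li_2(e^{-u+\i\theta})\bigr]+u\sigma_0-2\pi\i=\tfrac{\i}{\xi}A-2\pi\i,
\]
where $A:=-2\Im\Li_2(e^{-u+\i\theta})+u(\theta+2\pi)\in\R$. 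Subtracting $\Re F(0)$ and simplifying,
\[
\Re F(\sigma_0)-\Re F(0)=\frac{2p\pi\,g(u)}{u^2+4p^2\pi^2},\qquad g(u):=u\theta(u)-2\Im\Li_2\bigl(e^{-u+\i\theta(u)}\bigr),
\]
so the remaining task is to show $g(u)>0$ on $(0,\kappa)$.

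\textbf{Step 3 (Monotonicity of $g$).} I would prove that $g$ is strictly decreasing on $[0,\kappa]$ with $g(\kappa)=0$. The endpoint value is immediate: $\theta(\kappa)=0$, hence $e^{-\kappa}\in(0,1)$ and $\Im\Li_2(e^{-\kappa})=0$. The relation $2\cos\theta=2\cosh u-1$ makes $\theta$ a smooth function of $u$, and a direct differentiation gives
\[
g'(u)=\theta+u\theta'-2\arg(1-e^{-u+\i\theta})+2\theta'\log|1-e^{-u+\i\theta}|.
\]
The crucial identity is $|1-e^{-u+\i\theta}|^2=e^{-u}$: expanding yields $1-2e^{-u}\cos\theta+e^{-2u}$, and substituting $2\cos\theta=2\cosh u-1$ collapses this to $1-e^{-u}(e^{u}+e^{-u})+e^{-u}+e^{-2u}=e^{-u}$. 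Therefore $\log|1-e^{-u+\i\theta}|=-u/2$, the two $\theta'$-contributions cancel, and
\[
g'(u)=\theta(u)-2\arg\bigl(1-e^{-u+\i\theta(u)}\bigr).
\]
For $u\in(0,\kappa)$ we have $\theta(u)<0$, while $\Im(1-e^{-u+\i\theta})=-e^{-u}\sin\theta>0$ forces $\arg(1-e^{-u+\i\theta})>0$. Hence $g'(u)<0$ on $(0,\kappa)$, and together with $g(\kappa)=0$ this gives $g(u)>0$ on $[0,\kappa)$, completing the proof.

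\textbf{Main obstacle.} The only delicate point is spotting the identity $|1-e^{-u+\i\theta(u)}|^2=e^{-u}$ that causes the $\theta'$-terms in $g'(u)$ to cancel. Without it, $g'$ would mix the implicit derivative $\theta'$ (which blows up as $u\to\kappa^{-}$) with a nontrivial logarithmic modulus, and the sign of $g'$ would be much harder to control; everything else is routine algebra and a sign analysis.
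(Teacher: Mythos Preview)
Your argument is largely correct and considerably more self-contained than the paper's, but there is one subtle point worth flagging.

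\textbf{On Step 1.} The evaluation $F(0)=4p\pi^2/\xi$ comes from literal substitution into the first formula for $F$. However, $e^{\xi}=e^{u}>1$ lies on the branch cut of $\Li_2$, and as $z\to 0$ from within $U_0$ the two arguments $e^{\xi(1\mp z)}$ approach $e^{u}$ from \emph{opposite} sides of the cut. Using the second formula (valid on $U_0$), one finds $\lim_{z\to 0^+}F(z)=-2\pi\i$, whose real part is $0$ rather than positive. The paper's own computation of $\xi\bigl(F(\sigma_0)-F(0)\bigr)$ is consistent with this limit value, not with $4p\pi^2/\xi$. So your Step 1 rests on a particular reading of ``$F(0)$'' that differs from the boundary limit. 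Either way, your argument does establish $\Re F(\sigma_0)>0$, which is what is actually used downstream.

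\textbf{On Steps 2--3 and the comparison.} Here your proof and the paper genuinely diverge. The paper shows that $\xi\bigl(F(\sigma_0)-F(0)\bigr)$ is purely imaginary and then \emph{cites} \cite{Murakami:JTOP2013} both for $\Re F(0)>0$ and for positivity of the imaginary part $2\Im\Li_2(e^{-u-\varphi(u)})+u(\theta+2\pi)$. You instead reduce everything to the single inequality $g(u):=u\theta-2\Im\Li_2(e^{-u+\i\theta})>0$ and prove it directly. Your key observation, the algebraic identity $|1-e^{-u+\i\theta}|^{2}=e^{-u}$ (forced by the relation $2\cos\theta=2\cosh u-1$), makes the $\theta'$-terms cancel from $g'(u)$ and leaves $g'(u)=\theta-2\arg(1-e^{-u+\i\theta})<0$; combined with $g(\kappa)=0$ this gives $g>0$ on $(0,\kappa)$. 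This is exactly the inequality the paper defers to its earlier work, so your Step~3 supplies a clean, self-contained proof of the cited fact. The trade-off: the paper's route is shorter on the page but opaque without the reference; yours is elementary and complete.
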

\begin{lem}\label{lem:F_P_12}
We have the inequality $\Re{\Phi_m\left(P_{12}\right)}<\Re{\Phi_m(\sigma_m)}$.
\end{lem}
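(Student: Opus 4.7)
The plan is to sandwich $\Re\Phi_m(P_{12})$ between explicit quantities, using $\Re F(0)$ as an intermediate step and invoking Lemma~\ref{lem:F_sigma} to bridge to $\Re F(\sigma_0)=\Re\Phi_m(\sigma_m)$.

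Setting $w := P_{12} - 2m\pi\i/\xi$, so that $\Phi_m(P_{12})=F(w)$, I observe that $L_M$ is equivalent to $\Im(\xi z)=(2m+1)\pi$, hence $\Im(\xi w)=\pi$. A direct calculation from the stated coordinates of $P_{12}$ will yield $\xi w = t_0 + \pi\i$ with the explicit real number
\[
t_0 = \frac{u\bigl[(6m+5)\pi + 2\theta\bigr]}{2p\pi}, \qquad \theta := \Im\varphi(u) \in (-\pi/3, 0).
\]
Consequently $e^{\xi(1\pm w)} = -e^{u\mp t_0}$ is negative real, making $\Li_2(e^{\xi(1\pm w)}) = \Li_2(-e^{u\mp t_0})$ real; substituting into the formula for $F$ and taking real parts gives
\[
\Re\Phi_m(P_{12}) = \frac{u}{|\xi|^2}\bigl[H(t_0) + 2p\pi^2\bigr], \qquad H(t) := \Li_2(-e^{u-t}) - \Li_2(-e^{u+t}) - ut.
\]
Separately, plugging $w=0$ into $F$ makes the two dilogarithm terms cancel, leaving $F(0) = 4p\pi^2/\xi$ and $\Re F(0) = 4p\pi^2 u/|\xi|^2$.

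Since Lemma~\ref{lem:F_sigma} already provides $\Re F(0) < \Re F(\sigma_0)$, the claim reduces to $\Re\Phi_m(P_{12}) < \Re F(0)$, equivalently $H(t_0) < 2p\pi^2$. I would then verify $H'(t) = \log\bigl[(1+e^{u-t})(1+e^{u+t})\bigr] - u > 0$ from the estimate $(1+e^{u-t})(1+e^{u+t}) \ge (1+e^u)^2 > e^u$, so $H$ is strictly increasing. Since $t_0$ is increasing in $m$ and $t_{p-1} < 3u$ by direct inspection, it suffices to bound $H(3u)$. Applying the inversion identity~\eqref{eq:dilog} to $\Li_2(-e^{u+t})$ and simplifying yields
\[
H(t) = \Li_2(-e^{u-t}) + \Li_2(-e^{-u-t}) + \frac{\pi^2}{6} + \frac{u^2 + t^2}{2};
\]
both dilogarithms on the right are negative, so $H(t) < \pi^2/6 + (u^2+t^2)/2$, and at $t = 3u$ with $u < \kappa = \arccosh(3/2) < 1$ this gives $H(3u) < \pi^2/6 + 5u^2 < 2\pi^2 \le 2p\pi^2$, closing the argument.

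The hard part is spotting the detour through $\Re F(0)$. A direct comparison of $\Re\Phi_m(P_{12})$ with $\Re\Phi_m(\sigma_m)$ would pit real dilogarithms in $t_0$ against the imaginary part of a complex one in $\varphi(u)$, with no obvious algebraic reconciliation; but the clean formula $F(0) = 4p\pi^2/\xi$ converts the problem into the mild bound $H(t_0) < 2p\pi^2$, which the dilogarithm inversion identity handles routinely once $\kappa < 1$ is in hand.
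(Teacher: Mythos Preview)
Your proof is correct. Both your argument and the paper's reduce the claim to the same inequality: in the paper's notation $c_{p,m}(u)<0$, which is exactly your $H(t_0)<2p\pi^2$ (the paper's $q_m(u)$ is your $t_0$, and the two dilogarithm expressions agree after one application of the inversion identity~\eqref{eq:dilog}). The organisational difference is that you factor the comparison through $\Re F(0)$ and invoke Lemma~\ref{lem:F_sigma}, while the paper compares $\Phi_m(P_{12})$ with $\Phi_m(\sigma_m)$ directly and brings in the inequality $2\Im\Li_2(e^{-u-\varphi(u)})+u\theta>0$ from \cite[\S 7]{Murakami:JTOP2013}; these two devices are equivalent, since that cited inequality is precisely what drives Lemma~\ref{lem:F_sigma}.

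Where the two arguments genuinely diverge is in the endgame. The paper shows $c_{p,m}(u)<0$ by differentiating in $u$, pushing to the boundary $u=\kappa$, then reducing over $m$ and $p$ to the single numerical check $c_{1,0}(\kappa)\approx -14.99<0$. Your route is cleaner: after observing that $H$ is increasing and $t_0<3u$, the inversion identity plus the negativity of $\Li_2$ on the negative reals gives the closed-form bound $H(3u)<\pi^2/6+5u^2$, and $\kappa<1$ finishes it without any numerical evaluation. This buys you a fully analytic proof of the final inequality at the cost of a slightly cruder estimate; the paper's approach is sharper in principle but leans on a computer-verified number at the end.
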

Proofs of the lemmas are given in Section~\ref{sec:lemmas}.
\begin{proof}[Proof of Proposition~\ref{prop:Phi}]
In the following proof, we assume that $\nu$ is sufficiently small.
We may need to modify the argument below slightly if necessary.
\par
(1).
We know that $\Phi_m(z)$ is of the form \eqref{eq:Phi_Taylor}.
Since $a_2=\frac{1}{2}\xi\i\sqrt{(2\cosh{u}+1)(3-2\cosh{u})}$ and $0<u<\arccosh(3/2)$, we see that $\Re{a_2}=-p\pi\sqrt{(2\cosh{u}+1)(3-2\cosh{u})}<0$.
So we conclude that $\Phi_m(z)$ is of this form.
\par
(2).
Writing $z=x+y\i$, we have
\begin{equation*}
  \frac{\partial}{\partial\,y}\Re{\Phi_m(x+y\i)}
  =
  -\arg\tau(x,y)
\end{equation*}
from \eqref{eq:F'}, where we put $\tau(x,y):=2\cosh(u)-2\cosh\bigl(\xi(x+y\i)\bigr)$.
Since we have
\begin{equation*}
  \Im\tau(x,y)
  =
  -2\sinh(ux-2p\pi y)\sin(uy+2p\pi x),
\end{equation*}
we see that $\Im\tau(x,y)>0$ ($\Im\tau(x,y)<0$, respectively) if and only if $ux<2p\pi y$ and $2k\pi<uy+2p\pi x<(2k+1)\pi$ for some integer $k$, or $ux>2p\pi y$ and $(2l-1)\pi<uy+2p\pi x<2l\pi$ for some integer $l$ ($ux>2p\pi y$ and $2k\pi<uy+2p\pi x<(2k+1)\pi$ for some integer $k$, or $ux<2p\pi y$ and $(2l-1)\pi<uy+2p\pi x<2l\pi$ for some integer $l$, respectively).
Since $z\in U_m$, we have $2m\pi<uy+2p\pi x<2(m+1)\pi$.
So we have
\begin{align*}
  &\frac{\partial}{\partial\,y}\Re{\Phi_m(x+y\i)}>0
  \quad\text{if and only if}
  \\
  &\quad\phantom{\text{or }}\text{$ux>2p\pi y$ and $2m\pi<uy+2p\pi x<(2m+1)\pi$}
  \\
  &\quad\text{or $ux<2p\pi y$ and $(2m+1)\pi<uy+2p\pi x<2(m+1)\pi$},
\end{align*}
and
\begin{align*}
  &\frac{\partial}{\partial\,y}\Re{\Phi_m(x+y\i)}<0
  \quad\text{if and only if}
  \\
  &\quad\phantom{\text{or }}\text{$ux<2p\pi y$ and $2m\pi<uy+2p\pi x<(2m+1)\pi$}
  \\
  &\quad\text{or $ux>2p\pi y$ and $(2m+1)\pi<uy+2p\pi x<2(m+1)\pi$}.
\end{align*}
Therefore, fixing $x$, $\Re{\Phi_m(x+y\i)}$ is monotonically increasing (decreasing, respectively) with respect to $y$ in the red region (yellow region, respectively) in Figure~\ref{fig:hexagon}.
\par
Next, we will show (i) the segment $\overline{P_{50}P_{34}}\subset L_{\sigma}$ except $\sigma_m$, (ii) the segment $\overline{P_3P_{34}}\subset L_E$, and (iii) the segment $\overline{P_{12}P_{45}}\subset L_M$ are in $D_m$.
See Figure~\ref{fig:cross}
\begin{figure}[h]
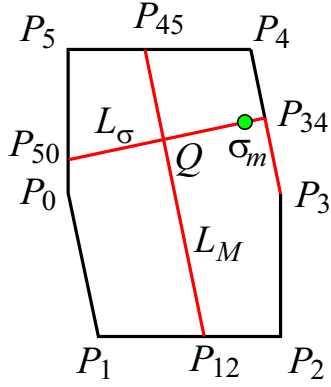

\pic{0.6}{cross}
\caption{The red segments are in $D_m$.}
\label{fig:cross}
\end{figure}
\par
(i):
Consider the segment of $L_{\sigma}$ between $L_W$ and $L_E$ that is parametrized as $\ell_{\sigma}(t):=t\sigma_m$ ($\frac{2m\pi}{2(m+1)\pi+\theta}\le t\le\frac{2(m+1)\pi}{2(m+1)\pi+\theta}$).
Then we have
\begin{align*}
  \frac{d}{d\,t}\Re\Phi_m\left(\ell_{\sigma}(t)\right)
  &=
  \Re\left(\sigma_m\log\bigl(2\cosh(u)-2\cosh(t\sigma_m\xi\bigr)\right)
  \\
  &=
  (\Re\sigma_m)
  \log\left(2\cosh(u)-2\cos\left(\bigl(\theta+2(m+1)\pi\bigr)t\right)\right).
\end{align*}
Since $2m\pi\le\bigl(2(m+1)\pi+\theta\bigr)t\le2(m+1)\pi$ and $\cosh{u}-1/2=\cosh\varphi(u)=\cos\theta$, we see that $\frac{d}{d\,t}\Re\Phi_m\left(\ell_{\sigma}(t)\right)>0$ if and only if $\frac{2m\pi-\theta}{2(m+1)\pi+\theta}<t<1$, and that $\frac{d}{d\,t}\Re\Phi_m\left(\ell_{\sigma}(t)\right)<0$ if and only if $\frac{2m\pi}{2(m+1)\pi+\theta}<t<\frac{2m\pi-\theta}{2(m+1)\pi+\theta}$ or $1<t<\frac{2(m+1)\pi}{2(m+1)\pi+\theta}$.
\par
Let $P_W$ be the point $L_{\sigma}\cap L_W$ with coordinate $\frac{2m\pi\i}{\xi}$.
Since $\Phi_m(P_W)=F(0)$ and $\Phi_m(\sigma_m)=F(\sigma_0)$, Lemma~\ref{lem:F_sigma} implies that $\Re\Phi_m\left(\ell_{\sigma}(t)\right)$ takes its maximum $\Re\Phi_m(\sigma_m)$ at $t=1$.
This shows that $L_{\sigma}\cap E_m$ is in $D_m$ except for $\sigma_m$.
\par
(ii):
Consider the segment $\overline{P_3P_4}$ that is parametrized as $\ell_E(t):=\frac{m+1}{p}-\frac{u}{2p\pi}t+t\i=\frac{m+1}{p}-\frac{\overline{\xi}}{2p\pi}t$ ($0\le t\le2\Im\sigma_m$).
We have
\begin{align*}
  &\frac{d}{d\,t}\Re{\Phi_m\left(\ell_E(t)\right)}
  \\
  =&
  -\Re
  \left(
    \frac{\overline{\xi}}{2p\pi}
    \log\left(2\cosh{u}-2\cosh\left(\xi\ell_E(t)\right)\right)
  \right)
  \\
  =&
  -
  \frac{u}{2p\pi}
  \log\left(2\cosh{u}-2\cosh\left(\frac{(m+1)u}{p}-\frac{|\xi|^2t}{2p\pi}\right)\right)
  >0,
\end{align*}
because
\begin{align*}
  &\left|
    \frac{(m+1)u}{p}-\frac{|\xi|^2t}{2p\pi}
  \right|
  \\
  \le&
  \max
  \left\{
    \frac{(m+1)u}{p},\left|\frac{(m+1)u}{p}-\frac{u\bigl(\theta+2(m+1)\pi\bigr)}{p\pi}\right|
  \right\}
  \\
  =&
  \max
  \left\{
    \frac{(m+1)u}{p},\frac{(m+1)u}{p}+\frac{u\theta}{p\pi}
  \right\}
  =
  \frac{(m+1)u}{p}
  \le u.
\end{align*}
Since the point $P_{34}$ is in $D_m$, we conclude that $\overline{P_3P_{34}}\subset D_m$.
\par
(iii):
The line $L_M$ between $\underline{H}$ and $\overline{H}$ is parametrized as $\ell_M(t):=\frac{2m+1}{2p}-\frac{u}{2p\pi}t+t\i=\frac{2m+1}{2p}-\frac{\overline{\xi}}{2p\pi}t$ ($-2\Im\sigma_m\le t\le2\Im\sigma_m$).
Now we have
\begin{align*}
  &\frac{d}{d\,t}\Re{\Phi_m\left(\ell_M(t)\right)}
  \\
  =&
  -\Re
  \left(
    \frac{\overline{\xi}}{2p\pi}
    \log
    \left(
      2\cosh(u)-2\cosh\left(\frac{(2m+1)}{2p}\xi-\frac{|\xi|^2}{2p\pi}t\right)
    \right)
  \right)
  \\
  =&
  -
  \frac{u}{2p\pi}
  \log
  \left(
    2\cosh(u)+2\cosh\left(\frac{(2m+1)u}{2p}-\frac{|\xi|^2}{2p\pi}t\right)
  \right)
  <0.
\end{align*}
Since $\ell_M(-2\Im\sigma_m)=P_{12}$, from Lemma~\ref{lem:F_P_12}, we see that $\Re{\Phi_m(P_{12})}<\Re{\Phi_m(\sigma_m)}$.
Therefore every point $z$ on $\overline{P_{12}P_{45}}$ satisfies $\Re{\Phi_m(z)}<\Re{\Phi_m(\sigma_m)}$.
\par
Now we split $E_m$ into five pieces:
\begin{align*}
  E_{m,1}
  &:=
  \{z\in E\mid b_m^{-}\le\Re{z}\le\Re{P_{45}}\},
  \\
  E_{m,2}
  &:=
  \{z\in E\mid\Re{P_{45}\le\Re{z}}\le\Re{Q}\},
  \\
  E_{m,3}
  &:=
  \{z\in E\mid\Re{Q}\le\Re{z}\le\Re{P_{12}}\},
  \\
  E_{m,4}
  &:=
  \{z\in E\mid\Re{P_{12}}\le\Re{z}\le\Re{\sigma_m}\},
  \\
  E_{m,5}
  &:=
  \{z\in E\mid\Re{\sigma_m}\le\Re{z}\le\Re{P_{34}}\},
  \\
  E_{m,6}
  &:=
  \{z\in E\mid\Re{P_{34}}\le\Re{z}\le b_m^{+}\},
\end{align*}
where $Q$ is the intersection of $L_M$ and $L_{\sigma}$.
See Figure~\ref{fig:D_m}.
\begin{figure}[h]
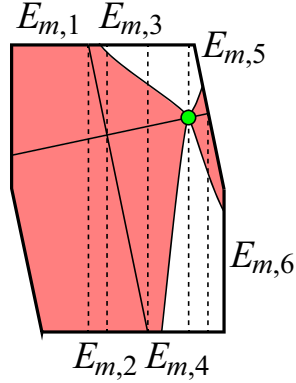

\pic{0.6}{D}
\caption{The red region is $D_m$.}
\label{fig:D_m}
\end{figure}
\par
Note the following:
\begin{itemize}
\item
$\Re{P_1}<\Re{P_{45}}$:
This is because $\Re{P_1}-\Re{P_{45}}=-\frac{1}{2p}+2\frac{u}{p\pi}\Im\sigma_m$, which can be proved to be negative.
\item
$\Re{P_{12}}<\Re{P_4}$:
This is because $\Re{P_{12}}-\Re{P_4}=-\frac{1}{2p}+2\frac{u}{p\pi}\Im\sigma_m<0$ as above.
\item
$\Re{P_{12}}<\Re{\sigma_m}$:
This is because $\Re{P_{12}}-\Re\sigma_m=\frac{2m+1}{2p}+\frac{u}{p\pi}\Im\sigma_m-\Re\sigma_m<0$.
\item
$\Re{\sigma_m}$ can be greater than, less than, or equal to $\Re{P_4}$.
\end{itemize}
\par
In the following, we will show that any point in $\left(E_{m,1}\cup E_{m,2}\cup E_{m,3}\cup E_{m,4}\right)\cap D_m$ can be connected to a point on $L_{\sigma}$ by a segment contained in $D_m$, and that any point in $\left(E_{m,5}\cup E_{m,6}\right)\cap D_m$ can also be connected to a point on $L_{\sigma}$ by a segment contained in $D_m$.
We will also show that the vertical line through $\sigma_m$ does not intersect with $D_m$.
Then, we conclude that $D_m\cap E_m$ has two connected components $\left(E_{m,1}\cup E_{m,2}\cup E_{m,3}\cup E_{m,4}\right)\cap D_m$ and $\left(E_{m,5}\cup E_{m,6}\right)\cap D_m$ because $L_{\sigma}\setminus\{\sigma_m\}$ has two connected components.
\begin{itemize}
\item $E_{m,1}$:
Since $\Re{\Phi_m(x+y\i)}<\Re{\Phi_m(\sigma_m)}$ when $x+y\i$ is on $L_{\sigma}$ and $\Re{\Phi_m(x+y\i)}$ decreases whether $y$ increases or decreases fixing $x\in\left[b_m^{-},\Re{P_{45}}\right]$, we conclude that $\Re{\Phi_m(x+y\i)}<\Re{\Phi_m(\sigma)}$ for any $x+y\i\in E_{m,1}$.
So we can connect any point in $E_{m,1}$ to a point on $L_{\sigma}$.
\item $E_{m,2}$:
Figure~\ref{fig:E_2} indicates a graph of $\Re{\Phi_m(x+y\i)}$ for $x+y\i\in E_{m,2}$ with fixed $x$.
\begin{figure}[h]
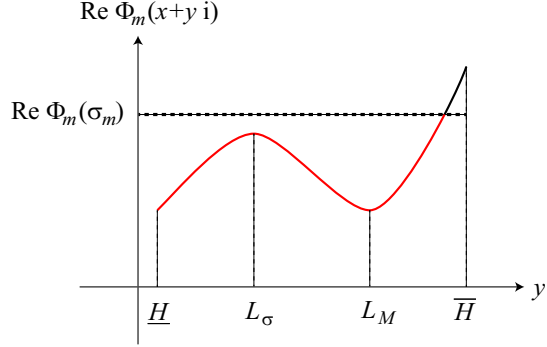

\pic{0.4}{E_2}
\caption{The vertical axis is $\Re{\Phi_m(x+y\i)}$ and the horizontal axis is $y$ with fixed $x$.
The red part is included in $D_m$. Note that the local maximum is less than $\Re{\Phi_m(\sigma_m)}$.}
\label{fig:E_2}
\end{figure}
This figure shows that any point in $E_{m,2}\cap D_m$ can be connected to a point on $L_{\sigma}$ by a vertical segment in $D_m$.
\item $E_{m,3}$:
A graph of $\Re{\Phi_m(x+y\i)}$ for $x+y\i\in E_{m,3}$ with fixed $x$ looks like Figure~\ref{fig:E_2} because $\overline{P_{12}P_{45}}\subset D_m$.
\begin{figure}[h]
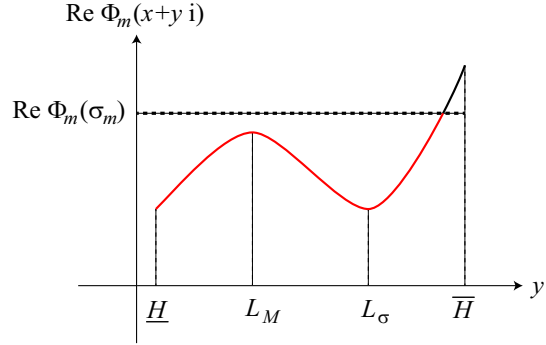

\pic{0.4}{E_3}
\caption{The vertical axis is $\Re{\Phi_m(x+y\i)}$ and the horizontal axis is $y$ for fixed $x$.
The red part is included in $D_m$.}
\label{fig:E_3}
\end{figure}
Therefore the argument as before shows that any point in $E_{m,3}\cap D_m$ can be connected to a point on $L_{\sigma}$ by a vertical segment in $D_m$.
\item $E_{m,4}$:
Starting at a point on $L_{\sigma}$, whether $y$ increases or decreases, $\Re\Phi_m(x+y\i)$ increases.
Therefore any point in $E_{m,4}\cap D_m$ can be connected to a point on $L_{\sigma}$ by a vertical segment in $D_m$.
\item $E_{m,5}$:
This follows by the same reason as $E_{m,4}\cap D_m$.
\item $E_{m,6}$:
By the same argument as $E_{m,4}$, we can connect any point $z$ in $E_{m,6}\cap D_m$ to a point $z'$ in  $\overline{P_3P_{34}}$ by a vertical segment in $D_m$, and then connect $z'$ to a point in $L_{\sigma}$ by a segment in $\overline{P_3P_{34}}$.
(Precisely speaking, we need to push these segments in $E_{m,6}$.)
\end{itemize}
The fact that the vertical segment through $\sigma_m$ does not intersect with $D_m$ easily follows because $\sigma_m\not\in D_m$, and $\frac{\partial}{\partial\,y}\Re{\Phi_m(x+y\i)}$ is increasing (decreasing, respectively)  if $x+y\i$ is above $\sigma_m$ (below $\sigma_m$, respectively).
\par
See Figure~\ref{fig:D_m}.
\par
(3).
From the definition, we know that $b_m^{-}\in E_{m,1}$ and $b_m^{+}\in E_{m,6}$.
Therefore we can choose $\varepsilon_{m}$ such that $\Re{\Phi_m(b_m^{\pm})}<\Re{\Phi_m(\sigma_m)}-\varepsilon_{m}$.
\par
(4).
Since any point $z$ ($z\ne\sigma_m$) on the polygonal chain $\overline{P_0P_{50}P_{34}}P_3$ satisfies $\Re{\Phi_m(z)}<\Re{\Phi_m(\sigma_m)}$, and $\Im\sigma_m>0$, we conclude that this is in $\overline{R}_m$.
Therefore we can connect $b_m^{-}$ and $b_m^{+}$ in $\overline{R}_m$.
\par
(5).
We know that if $z$ is on the polygonal chain $\overline{P_0P_1P_{12}}$, then $\Re{\Phi_m(z)}<\Re{\Phi_m(\sigma_m)}$, which shows that $\overline{P_0P_1P_{12}}$ is in $\underline{R}_m$.
\par
We will show that the segment $\overline{P_{12}P_2}$ is also in $\underline{R}_m$.
From the proof of (2), we have $0>\frac{\partial}{\partial\,y}\Re{\Phi_m(x+y\i)}>-\pi$ if $x+y\i\in\overline{P_{12}P_2}$.
We know that if $x+y\i$ is on the polygonal chain $\overline{QP_{34}P_3}$, then $\Re{\Phi_m(x+y\i)}\le\Re{\Phi_m(\sigma_m)}$.
Since the difference of the imaginary part of $x-2\Im\sigma_m\i$ and $x+y\i$ is less than $4\Im\sigma_m$ if $x+y\i$ is on the polygonal chain $\overline{QP_{34}P_3}$, we have $\Re{\Phi_m(x-2\Im\sigma_m\i)}-\Re{\Phi_m(x+y\i)}<4\pi\Im\sigma_m$.
Therefore $\Re{\Phi_m(x-2\Im\sigma_m\i)}-\Re{\Phi_m(\sigma_m)}<2\pi\times2\Im\sigma_m$, proving that $z\in\underline{R}_m$ if $z$ is on $\overline{P_{12}P_2}$.
\par
The segment $\overline{P_2P_3}$ is also in $\underline{R}_m$.
This is because $\frac{\partial}{\partial\,y}\bigl(\Re{\Phi_m\bigl((m+1)/p+y\i\bigr)}+2\pi y\bigr)=\frac{\partial}{\partial\,y}\Re{\Phi_m\bigl((m+1)/p+y\i\bigr)}+2\pi>0$ and $P_3\in\underline{R}_m$.
\par
Now, we can connect $b_m^{-}$ and $b_m^{+}$ by the polygonal chain $\overline{P_0P_1P_2P_3}$.
\par
The proof is complete.
\end{proof}

\section{Proof of Theorem~\ref{thm:main}}\label{sec:proof}
Now we can prove Theorem~\ref{thm:main}
\begin{proof}[Proof of Theorem~\ref{thm:main}]
Since $f_N(z)$ uniformly converges to $F(z)$ in the region \eqref{eq:T_N_f_N_converge}, $f_N\left(z-\frac{2m\pi\i}{\xi}\right)$ uniformly converges to $\Phi_m(z)$ in \eqref{eq:Phi_converge}.
So we can use \cite[Proposition~4.2]{Ohtsuki:QT2016} (see also Remark~4.4 there) to conclude that
\begin{align}\label{eq:Poisson_F}
  &\frac{1}{N}\sum_{m/p+\nu/p\le k/N\le(m+1)/p-\nu/p}
  \exp\left(N\times f_{N}\left(\frac{2k+1}{2N}-\frac{2m\pi\i}{\xi}\right)\right)
  \\
  =&
  \int_{m/p+\nu/p}^{(m+1)/p-\nu/p}
  e^{N\Phi_m(z)}
  \,dz
  +
  O(e^{-N\varepsilon'_{m}})
  \notag
\end{align}
for some $\varepsilon'_{m}>0$ from Proposition~\ref{prop:Phi}.
\par
Since $\Phi_m(z)$ is of the form \eqref{eq:Phi_Taylor} in $E_m$, we can apply the saddle point method (see \cite[Proposition~3.2 and Remark~3.3]{Ohtsuki:QT2016}) to obtain
\begin{equation}\label{eq:saddle_F}
  \int_{m/p+\nu/p}^{(m+1)/p-\nu/p}e^{N\Phi_m(z)}\,dz
  =
  \frac{\sqrt{\pi}e^{N\times F(\sigma_0)}}
       {\sqrt{-\frac{1}{2}\xi\sqrt{(2\cosh{u}+1)(2\cosh{u}-3)}}\sqrt{N}}
  \bigl(1+O(N^{-1})\bigr),
\end{equation}
where we choose the sign of the outer square root so that its real part is positive (recall that we choose the sign the inner square root so that it is a positive multiple of $\i$).
From \eqref{eq:Poisson_F} and \eqref{eq:saddle_F}, we have
\begin{align}\label{eq:f_N_F}
  &\sum_{m/p+\nu/p\le k/N\le(m+1)/p-\nu/p}
  \exp\left(N\times f_{N}\left(\frac{2k+1}{2N}-\frac{2m\pi\i}{\xi}\right)\right)
  \\
  =&
  \frac{\sqrt{2\pi}e^{\pi\i/4}}{\bigl((1+2\cosh{u})(3-2\cosh{u})\bigr)^{1/4}}
  e^{N\times F(\sigma_0)}
  \times
  \sqrt{\frac{N}{\xi}}
  \bigl(1+O(N^{-1})\bigr),
  \notag
\end{align}
since $\Re{F(\sigma_0)}>0$ from Lemma~\ref{lem:F_sigma}.
\par
Now, we use the following lemma, a proof of which is given in Section~\ref{sec:lemmas}.
\begin{lem}\label{lem:f_N_end_points}
There exists $\varepsilon>0$ such that $\Re{\Phi_m\left(\frac{m+1}{p}\right)}<\Re{\Phi_m(\sigma_m)}-2\varepsilon$ for $m=0,1,2,\dots,p-1$.
Moreover there exists $\tilde{\delta}_m>0$ such that if $\frac{m}{p}\le\frac{k}{N}<\frac{m}{p}+\tilde{\delta}_m$ or $\frac{m+1}{p}-\tilde{\delta}_m<\frac{k}{N}<\frac{m+1}{p}$, then we have
\begin{equation}\label{eq:f_N_sigma}
  \Re{f_N\left(\frac{2k+1}{2N}-\frac{2m\pi\i}{\xi}\right)}
  <
  \Re{F(\sigma_0)}-\varepsilon
\end{equation}
for sufficiently large $N$.
\end{lem}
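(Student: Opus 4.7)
The first inequality $\Re{\Phi_m(\frac{m+1}{p})}<\Re{\Phi_m(\sigma_m)}-2\varepsilon$ is essentially a byproduct of the proof of Proposition~\ref{prop:Phi}(2). In the analysis of the segment $\overline{P_3P_{34}}$, parametrized as $\ell_E(t)=\frac{m+1}{p}-\frac{\overline{\xi}}{2p\pi}t$ for $t\in[0,2\Im\sigma_m]$, we already showed $\frac{d}{dt}\Re{\Phi_m(\ell_E(t))}>0$ and that the upper endpoint $P_{34}=\ell_E(2\Im\sigma_m)$ lies in $D_m$. Hence the other endpoint $P_3=\ell_E(0)=\frac{m+1}{p}$ also lies in $D_m$, i.e., $\Re{\Phi_m(\frac{m+1}{p})}<\Re{\Phi_m(\sigma_m)}$. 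Since $m$ ranges over the finite set $\{0,1,\ldots,p-1\}$, this strict inequality yields a uniform $\varepsilon>0$, for instance one quarter of $\min_{m}\bigl(\Re{\Phi_m(\sigma_m)}-\Re{\Phi_m(\frac{m+1}{p})}\bigr)$.

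For the second assertion, I would first establish the analogous left-endpoint inequality $\Re{\Phi_m(\frac{m}{p})}<\Re{\Phi_m(\sigma_m)}$ by a parallel monotonicity analysis along $L_W$. To make sense of $\Phi_m$ at the boundary of $U_m$, one uses the representation $F(w)=\frac{1}{\xi}\Li_2(e^{-\xi(1+w)})-\frac{1}{\xi}\Li_2(e^{-\xi(1-w)})+uw-2\pi\i$ and checks that $|e^{-\xi(1\pm w)}|\le1$ at both real boundary points $w=\frac{m}{p}-\frac{2m\pi\i}{\xi}$ and $w=\frac{m+1}{p}-\frac{2m\pi\i}{\xi}$, so that $\Phi_m$ extends continuously to $[\frac{m}{p},\frac{m+1}{p}]$. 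Continuity then produces $\tilde{\delta}_m>0$ with $\Re{\Phi_m(z)}<\Re{\Phi_m(\sigma_m)}-\frac{3}{2}\varepsilon$ on $[\frac{m}{p},\frac{m}{p}+\tilde{\delta}_m]\cup[\frac{m+1}{p}-\tilde{\delta}_m,\frac{m+1}{p}]$. For $z=\frac{2k+1}{2N}$ at distance at least $\nu_0$ from these endpoints (for any fixed $\nu_0>0$), the uniform convergence $|f_N(z-\frac{2m\pi\i}{\xi})-\Phi_m(z)|=O(1/N)$ from \eqref{eq:T_N_f_N_converge} settles this case for $N$ large.

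The main obstacle is the thin sliver of $k/N$ within $\nu_0$ of the endpoints, where the uniform convergence region of Lemma~\ref{lem:T_N_L_2} does not apply: there the real part of the argument of $T_N$ in \eqref{eq:f_N} can be as small as $p/(2N)$. To handle it, I would refine Lemma~\ref{lem:T_N_L_2} via a more careful contour analysis of the triple pole at $x=0$ of the integrand $\frac{e^{(2z-1)x}}{x\sinh(x)\sinh(\gamma x)}$, showing by a residue calculation that the estimate $T_N(z)=\frac{N}{\xi}\Li_2(e^{2\pi\i z})+O(\log N)$ persists uniformly on neighborhoods of $\{\Re z=0\}\cup\{\Re z=1\}$. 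Dividing by $N$ the error is still $o(1)$, so combining with the strict bound $\Re{\Phi_m}<\Re{\Phi_m(\sigma_m)}-\frac{3}{2}\varepsilon$ from the previous paragraph yields the required $\Re{f_N(\cdots)}<\Re{F(\sigma_0)}-\varepsilon$ on the sliver as well.
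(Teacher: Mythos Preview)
Your first assertion matches the paper: both use Proposition~\ref{prop:Phi}(2)(ii) to place $(m+1)/p$ in $D_m$ and then take a uniform $\varepsilon$ over the finite set of $m$. For the sliver near the endpoints, however, your approach has a gap. You assert that Lemma~\ref{lem:T_N_L_2} can be refined to $T_N(z)=\frac{N}{\xi}\Li_2(e^{2\pi\i z})+O(\log N)$ uniformly near $\Re z\in\{0,1\}$, but you do not prove it, and the proposed mechanism---a residue at the triple pole $x=0$---misses the actual obstruction: the contour $\Rpath$ already avoids $x=0$, and what breaks in the proof of Lemma~\ref{lem:T_N_L_2} as $\Re z\to 0$ is the tail integral $I_{-}$, controlled by $e^{2x\Re z}$ as $x\to-\infty$. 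A refinement of the kind you claim is plausible (the next term in the standard quantum-dilogarithm expansion is essentially $\frac12\log(1-e^{2\pi\i z})$, which is $O(\log N)$ when $\Re z\sim 1/N$), but it requires a genuine argument that you have not supplied.

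The paper avoids any boundary analysis of $T_N$ by reverting to the exact product form. Writing $h_N(k)=\prod_{l=1}^{k}g(l/N)$ with $g(x)=4\sinh\bigl(\tfrac{\xi}{2}(1+x)\bigr)\sinh\bigl(\tfrac{\xi}{2}(1-x)\bigr)$, Lemma~\ref{lem:J_m/p} shows $|g(l/N)|<1$ whenever $l/N$ is close to any $m/p$, so $|h_N(k)|$ is monotone in $k$ across each sliver. Through the identity \eqref{eq:h_N_f_N} this becomes monotonicity of $\Re f_N$, which lets one push the bound already established just outside the sliver (where uniform convergence does apply) all the way to the endpoint. The left endpoint for $m\ge 1$ is handled not by evaluating $\Phi_m(m/p)$ as you suggest, but by the same monotonicity, comparing with a point in the \emph{previous} interval and absorbing the ratio $|\beta_{p,m}/\beta_{p,m-1}|$; the isolated case $m=0$, $k=0$ is a direct one-line estimate showing $\Re f_N(1/(2N))\to 0<\Re F(\sigma_0)$.
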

\par
If we choose $\nu$ so that $\nu/p\le\tilde{\delta}_m$, the sums
\begin{align*}
  \sum_{m/p\le k/N<m/p+\nu/p}
  \exp\left(N\times f_N\left(\frac{2k+1}{2N}-\frac{2m\pi\i}{\xi}\right)\right)
  \\
  \intertext{and}
  \sum_{(m+1)/p-\nu/p<k/N\le(m+1)/p}
  \exp\left(N\times f_N\left(\frac{2k+1}{2N}-\frac{2m\pi\i}{\xi}\right)\right)
\end{align*}
are both of order $O\left(Ne^{N(\Re{F(\sigma_0)}-\varepsilon)}\right)$ from Lemma~\ref{lem:f_N_end_points}.
Therefore we have
\begin{align*}
  &\sum_{m/p\le k/N\le(m+1)/p}
  \exp
  \left(
    N\times
    f_N\left(\frac{2k+1}{2N}-\frac{2m\pi\i}{\xi}\right)
  \right)
  \\
  =&
  \sum_{m/p+\nu/p\le k/N\le(m+1)/p-\nu/p}
  \exp
  \left(
    N\times
    f_N\left(\frac{2k+1}{2N}-\frac{2m\pi\i}{\xi}\right)
  \right)
  \\
  &+
  O\left(Ne^{N(\Re{F(\sigma_0)}-\varepsilon)}\right)
  \\
  =&
  \frac{\sqrt{2\pi}e^{\pi\i/4}}{\bigl((1+2\cosh{u})(3-2\cosh{u})\bigr)^{1/4}}
  e^{N\times F(\sigma_0)}
  \times
  \sqrt{\frac{N}{\xi}}
  \bigl(1+O(N^{-1})\bigr)
\end{align*}
where the second equality follows from \eqref{eq:f_N_F}.
\par
It follows that
\begin{align*}
  J_N\left(E;e^{\xi/N}\right)
  &=
  \frac{1}{2\sinh(u/2)}
  \left(\sum_{m=0}^{p-1}\beta_{p,m}\right)
  \times
  \frac{\sqrt{2\pi}e^{\pi\i/4}}{\bigl((1+2\cosh{u})(3-2\cosh{u})\bigr)^{1/4}}
  \\
  &\quad\times
  \sqrt{\frac{N}{\xi}}
  e^{N\times F(\sigma_0)}
  \bigl(1+O(N^{-1})\bigr)
\end{align*}
from \eqref{eq:J_f_N}.
Using \eqref{eq:J} with $N=p$ and $q=e^{4N\pi^2/\xi}$, we have
\begin{equation*}
  \sum_{m=0}^{p-1}
  \beta_{m,p}
  =
  J_{p}\left(E;e^{4N\pi^2/\xi}\right).
\end{equation*}
Therefore we have
\begin{align*}
  J_N\left(E;e^{\xi/N}\right)
  =&
  \frac{1}{2\sinh(u/2)}
  J_{p}\left(E;e^{4N\pi^2/\xi}\right)
  \times
  \frac{\sqrt{2\pi}e^{\pi\i/4}}{\bigl((1+2\cosh{u})(3-2\cosh{u})\bigr)^{1/4}}
  \\
  &\quad\times
  \sqrt{\frac{N}{\xi}}
  e^{N\times F(\sigma_0)}
  \bigl(1+O(N^{-1})\bigr).
\end{align*}
\par
Putting
\begin{align*}
  S_E(u)
  &:=
  \xi\bigl(F(\sigma_0)+2\pi\i\bigr)
  \\
  &=
  \Li_2\left(e^{-u-\varphi(u)}\right)-\Li_2\left(e^{-u+\varphi(u)}\right)
  +
  u\bigl(\varphi(u)+2\pi\i\bigr),
  \\
  T_E(u)
  &:=
  \frac{2}{\sqrt{(2\cosh{u}+1)(2\cosh{u}-3)}},
\end{align*}
we finally have
\begin{multline*}
  J_N\left(E;e^{\xi/N}\right)
  \\
  =
  \frac{\sqrt{-\pi}}{2\sinh(u/2)}T_E(u)^{1/2}
  J_{p}\left(E;e^{4N\pi^2/\xi}\right)
  \left(\frac{N}{\xi}\right)^{1/2}
  e^{\frac{N}{\xi}\times S_E(u)}
  \bigl(1+O(N^{-1})\bigr),
\end{multline*}
which proves Theorem~\ref{thm:main}.
\end{proof}
We can see that the cohomological adjoint Reidemeister torsion $T_E(u)$ equals $\pm T_E(u)$ and the Chern--Simons invariant $\CS_{u,v(u)}(\rho)$ is given by $S_E(u)-u\pi\i-\frac{1}{4}uv(u)\pmod{\pi^2\Z}$.
See for example \cite[Chapter~5]{Murakami/Yokota:2018} for calculation of the adjoint Reidemeister torsion and the Chern--Simons invariant.

\section{Quantum modularity}\label{sec:modular}
For $\eta:=\begin{pmatrix}a&b\\c&d\end{pmatrix}\in\SL(2;\Z)$ and a complex number $z$, define $\eta(z):=\dfrac{az+b}{cz+d}$ as usual.
We also define $\hbar_{\eta}(z):=\frac{2\pi\i}{z-\eta^{-1}(\infty)}=\frac{2c\pi\i}{cz+d}$.
\par
In \cite{Zagier:2010}, D.~Zagier conjectured the following.
\begin{conj}[Quantum modularity conjecture]\label{conj:modular_Zagier}
Let $K$ be a hyperbolic knot in $S^3$ and $\eta:=\begin{pmatrix}a&b\\c&d\end{pmatrix}\in\SL(2;\Z)$ with $c>0$.
Putting $X_0:=N/p$ for positive integers $N$ and $p$, the following asymptotic equivalence holds.
\begin{equation}\label{eq:modularity_Zagier}
  \frac{J_{cN+dp}\left(K;e^{2\pi\i\eta(X_0)}\right)}{J_{p}\left(K;e^{2\pi\i X_0}\right)}
  \\
  \underset{N\to\infty}{\sim}
  C_{K,\eta}
  \left(\frac{2\pi}{\hbar_{\eta}(X_0)}\right)^{3/2}
  \exp\left(\frac{\i\CV(K)}{\hbar_{\eta}(X_0)}\right),
\end{equation}
where $C_{K,\eta}$ is a complex number depending only on $\eta$ and $K$.
\end{conj}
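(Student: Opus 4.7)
The plan is to deduce Conjecture~\ref{conj:modular_Zagier} for the figure-eight knot $E$ as a limit of Theorem~\ref{thm:main}, and then indicate what would be required in general. First I would identify the $\eta$ implicitly handled by Theorem~\ref{thm:main}. At $u=0$ the evaluations $e^{\xi/N}$ and $e^{4N\pi^{2}/\xi}$ become $e^{2p\pi\i/N}$ and $e^{-2\pi\i N/p}$ respectively. With $X_{0}=N/p$ and $\eta = S := \begin{pmatrix}0 & -1\\1 & 0\end{pmatrix}$, we have $\eta(X_{0}) = -p/N$ and level $cN+dp = N$, so the left-hand side of \eqref{eq:modularity_Zagier} reads $J_{N}(E;e^{-2p\pi\i/N})/J_{p}(E;e^{-2\pi\i N/p})$. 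Since $J_{M}(E;q) = J_{M}(E;q^{-1})$ (the figure-eight is amphichiral) and $\hbar_{S}(X_{0}) = 2p\pi\i/N$, this matches the left-hand side of \eqref{eq:main} at $u=0$.

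For the exponent, $\varphi(0) = -\pi\i/3$ gives $S_{E}(0) = \Li_{2}(e^{\pi\i/3}) - \Li_{2}(e^{-\pi\i/3}) = 2\i\Cl_{2}(\pi/3) = \i\Vol(S^{3}\setminus E) = \i\CV(E)$, the last equality using that $\CS^{\mathrm{SO}(3)}$ vanishes for the amphichiral figure-eight. Hence $\exp((N/\xi)S_{E}(0))|_{u=0} = \exp(\i\CV(E)/\hbar_{S}(X_{0}))$, as required. The algebraic prefactor, however, does not pass cleanly to $u \to 0$: the factor $1/(2\sinh(u/2))$ in \eqref{eq:main} has a simple pole at $u=0$, whereas the left-hand side stays finite, so the $O(1/N)$ error in Theorem~\ref{thm:main} ceases to be uniform. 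Accordingly, the $\eta = S$ case of Conjecture~\ref{conj:modular_Zagier} requires redoing the saddle-point analysis of Sections~\ref{sec:Poisson}--\ref{sec:proof} at $u=0$ directly. At $u=0$ the saddle $\sigma_{0}$ and a second saddle obtained by $\varphi \mapsto -\varphi$ coalesce, and one expects a uniform stationary-phase expansion that absorbs the $u^{-1}$ singularity and produces an extra factor $(N/\xi)$ in the Gaussian integral, upgrading $(N/\xi)^{1/2}$ to $(N/\xi)^{3/2}$ and matching $(2\pi/\hbar_{S}(X_{0}))^{3/2}$ in \eqref{eq:modularity_Zagier}.

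The main obstacle is arbitrary $\eta = \begin{pmatrix}a&b\\c&d\end{pmatrix}$ with $c>0$. The natural strategy is to repeat Sections~\ref{sec:summation}--\ref{sec:proof} starting from the Habiro--Le expansion of $J_{cN+dp}(E;e^{2\pi\i\eta(X_{0})})$: partition the summation index $k \in \{0,\dots,cN+dp-1\}$ into $p$ blocks of length roughly $(cN+dp)/p$, telescope each block via generalisations of Corollary~\ref{cor:q_dilog} and Lemmas~\ref{lem:gamma/2}--\ref{lem:z_z+1} to extract the $m$-th Habiro--Le summand for $J_{p}(E;e^{2\pi\i X_{0}})$, and apply the Poisson summation formula together with the saddle-point method to the remaining integral. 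Establishing this $p$-block factorisation in the $\eta$-twisted chart is the heart of the difficulty: the two periods $1$ and $\gamma$ of $E_{N}(z)$ are interchanged by $\SL(2;\Z)$, so one would use Faddeev's quantum dilogarithm with modular parameter adapted to $\eta(X_{0})$ and rebuild the saddle geometry of $\sigma_{m}$, $E_{m}$, and Proposition~\ref{prop:Phi} in the new coordinates. For a general hyperbolic knot $K$ the absence of such a clean state sum means even this approach does not apply directly, and substantially new input would be needed.
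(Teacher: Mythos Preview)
The statement you were asked to address is a \emph{conjecture} (Conjecture~\ref{conj:modular_Zagier}); the paper does not prove it. It is attributed to Zagier, and the paper merely remarks that the figure-eight case was established by Garoufalidis--Zagier and that further small hyperbolic knots were handled by Bettin--Drappeau. There is therefore no proof in the paper to compare your proposal against, and your proposal is itself not a proof but an outline of obstacles.

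On the substance of what you wrote: you correctly identify that Theorem~\ref{thm:main} with $\eta=\begin{pmatrix}0&-1\\1&0\end{pmatrix}$ formally matches the shape of \eqref{eq:modularity_Zagier}, and that the pole of $1/(2\sinh(u/2))$ at $u=0$ together with the non-uniformity of the $O(N^{-1})$ error blocks a direct limit. This is exactly how the paper uses the comparison in Section~\ref{sec:modular}: not to deduce Zagier's conjecture, but to motivate the new Conjecture~\ref{conj:QMCCJ} for $u\ne0$. However, your explanation of the jump from exponent $1/2$ to $3/2$ via ``coalescing saddles'' is incorrect. At $u=0$ one has $\varphi(0)=-\pi\i/3\ne0$ and $a_{2}=-p\pi\sqrt{3}\ne0$, so the saddle $\sigma_{0}$ remains simple and isolated; nothing degenerates in the stationary-phase integral. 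The missing factor of $N$ is hidden elsewhere: in \eqref{eq:J_f_N} the prefactor is really $\dfrac{1-e^{-4pN\pi^{2}/\xi}}{2\sinh(u/2)}$, and since $e^{-4pN\pi^{2}/\xi}\bigl|_{u=0}=e^{2N\pi\i}=1$, a short expansion gives $\dfrac{1-e^{-4pN\pi^{2}/\xi}}{2\sinh(u/2)}\to N/p$ as $u\to0$ with $N$ fixed. That is the source of the extra power of $N$, not a change in the local saddle structure. Even granting this, turning it into an honest proof of the $\eta=S$ case would still require controlling the error in Theorem~\ref{thm:main} uniformly as $u\to0$, which the paper does not claim. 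Your remarks on general $\eta$ and general $K$ are programmatic rather than argumentative and do not bring the conjecture closer to being established.
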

Note that Conjecture~\ref{conj:modular_Zagier} is just a part of Zagier's original quantum modularity conjecture.
See \cite{Zagier:2010,Garoufalidis/Zagier:2021,Bettin/Drappeau:MATHA2022} for more details.
\begin{rem}
The modularity conjecture was proved by S.~Garoufalidis and D.~Zagier \cite{Garoufalidis/Zagier:2021} in the case of the figure-eight knot, and by S.~Bettin and S.~Drappeau \cite{Bettin/Drappeau:MATHA2022} for hyperbolic knots with at most seven crossings except for $7_2$.
\end{rem}
Bettin and Drappeau also proved that for the figure-eight knot $E$, $C_{E,\eta}$ is given as follows.
\begin{equation*}
  C_{E,\eta}
  =
  \frac{ce^{3\pi\i/4}}{3^{1/4}}
  \prod_{g=1}^{c}|\omega_g|^{2g/c}
  \left(\sum_{r=1}^{c}\prod_{g=1}^{r}|\omega_g|^2\right),
\end{equation*}
where $\omega_g:=1-\exp\left(2\pi\i(\frac{ag}{c}-\frac{5}{6c})\right)$.
\par
Since
\begin{equation*}
  S_E(0)
  =
  \Li_2\left(e^{\pi\i/3}\right)-\Li_2\left(e^{-\pi\i/3}\right)
  =
  \Vol\left(S^3\setminus{E}\right)\i
\end{equation*}
(see, for example, \cite[Appendix]{Milnor:BULAM382}), if $K$ is the figure-eight knot $E$ and $\eta=\begin{pmatrix}0&-1\\1&0\end{pmatrix}$, \eqref{eq:modularity_Zagier} turns out to be
\begin{equation}\label{eq:modularity_E}
  \frac{J_{N}\left(E;e^{2p\pi/N}\right)}{J_{p}\left(E;e^{2N\pi\i/p}\right)}
  \\
  \underset{N\to\infty}{\sim}
  -2\pi^{3/2}T_E(0)^{1/2}\left(\frac{N}{2p\pi\i}\right)^{3/2}
  \exp\left(\frac{NS_E(0)}{2p\pi\i}\right).
\end{equation}
Here we use the fact that $E$ is amphicheiral, that is, $E$ is equivalent to its mirror image, to conclude $J_{N}(E;q)=J_{N}(E;q^{-1})$.
Compare \eqref{eq:modularity_E} with \eqref{eq:J_p/J_1}, noting that $\xi=2p\pi\i$ when $u=0$.
\par
We can regard \eqref{eq:J_p/J_1} as a kind of quantum modularity with $\eta=\begin{pmatrix}0&-1\\1&0\end{pmatrix}$ as follows.
\par
Put $X:=\frac{2N\pi\i}{\xi}$.
Note that $\Re{X}\to\infty$ as $N\to\infty$.
We have $\eta(X)=\frac{-\xi}{2N\pi\i}$, $\exp(2\pi\i X)=e^{-4N\pi^2/\xi}$, $\exp\bigl(2\pi\i\eta(X)\bigr)=e^{-\xi/N}$, and $\hbar_{\eta}(X)=\xi/N$.
Since the figure-eight knot is amphicheiral, \eqref{eq:J_p/J_1} can be written as
\begin{equation*}
  \frac{J_N\left(E;e^{2\pi\i\eta(X)}\right)}{J_p\left(E;e^{2\pi\i X}\right)}
  \sim
  \frac{\sqrt{-\pi}}{2\sinh(u/2)}
  \left(\frac{T_E(u)}{\hbar_{\eta}(X)}\right)^{1/2}
  \exp\left(\frac{S_E(u)}{\hbar_{\eta}(X)}\right).
\end{equation*}
\par
We would like to generalize this to other elements of $\SL(2;\Z)$ and other hyperbolic knots in $S^3$.
Some computer experiments indicate the following conjecture stated in Introduction.
\begin{conj}[Quantum modularity conjecture for the colored Jones polynomial]\label{conj:QMCCJ}
Let $K\subset S^3$ be a hyperbolic knot, and $u$ a small complex number that is not a rational multiple of $\pi\i$.
For positive integers $p$ and $N$, put $\xi:=u+2p\pi\i$ and $X:=\frac{2N\pi\i}{\xi}$.
Then for any $\eta=\begin{pmatrix}a&b\\c&d\end{pmatrix}\in\SL(2;\Z)$ with $c>0$, the following asymptotic equivalence holds.
\begin{equation}\label{eq:QMCCJ}
  \frac{J_{cN+dp}\left(K;e^{2\pi\i\eta(X)}\right)}{J_{p}\left(K;e^{2\pi\i X}\right)}
  \underset{N\to\infty}{\sim}
  C_{K,\eta}(u)\frac{\sqrt{-\pi}}{2\sinh(u/2)}
  \left(\frac{T_K(u)}{\hbar_{\eta}(X)}\right)^{1/2}
  \exp\left(\frac{S_K(u)}{\hbar_{\eta}(X)}\right),
\end{equation}
where $C_{K,\eta}(u)\in\C$ does not depend on $p$.
\end{conj}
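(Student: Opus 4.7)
The natural approach is to generalize the quantum-dilogarithm, Poisson-summation, and saddle-point machinery of Sections~\ref{sec:preliminaries}--\ref{sec:proof}. The case $\eta=\begin{pmatrix}0&-1\\1&0\end{pmatrix}$, $K=E$ is exactly Theorem~\ref{thm:main} modulo the amphicheirality $J_n(E;q)=J_n(E;q^{-1})$, so the content of the conjecture is that the same mechanism operates for arbitrary $\eta\in\SL(2;\Z)$ with $c>0$ and arbitrary hyperbolic $K$. The small parameter $\xi/N$ gets replaced by $\hbar_\eta(X)=2c\pi\i/(cX+d)=O(1/N)$, the level $N$ by $M:=cN+dp$, and the pole of $1-q^l$ at the root of unity $q=1$ by the pole at $q=e^{2\pi\i a/c}$. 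The key identity is $\eta(X)=a/c-1/\bigl(c(cX+d)\bigr)$, whence
\begin{equation*}
  q^l
  =
  e^{2\pi\i al/c}\,e^{-l\hbar_\eta(X)/c^2},
\end{equation*}
so that, for $K=E$ for instance, the Habiro--Le formula~\eqref{eq:J} at level $M$ with $q=e^{2\pi\i\eta(X)}$ becomes a sum of products of factors $1-e^{2\pi\i a(M\pm l)/c}e^{-(M\pm l)\hbar_\eta(X)/c^2}$.

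The first real step is to introduce a level-$M$ generalized quantum dilogarithm $E_{M,\eta}(z)$, defined by the same integral as $E_N(z)$ but with $\gamma$ replaced by $\eta(X)$ and the contour $\Rpath$ deformed to avoid the shifted pole lattice. The analogs of Lemmas~\ref{lem:gamma/2} and~\ref{lem:z_z+1} should still telescope the Pochhammer-type products, but now grouped according to the residue $l\bmod c$: within each residue class the telescoping yields a clean $E_{M,\eta}$-ratio, while across the $c$ residue classes one picks up boundary factors of the form $1-e^{2\pi\i(ag/c+\cdots)}$ for $g=1,\dots,c$, which are exactly the ``quantum spikes'' $\omega_g$ appearing in Bettin--Drappeau's closed-form expression for $C_{E,\eta}$ at $u=0$. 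Simultaneously, partitioning the outer index $k\in\{0,\dots,M-1\}$ into $p$ macro-blocks---the level-adapted analog of the bounds $mN/p<k<(m+1)N/p$ appearing in~\eqref{eq:qfac_E_N}---produces $p$ partial products. The accumulated boundary factors from this macro-partition telescope, via Lemma~\ref{lem:gamma/2}, into
\begin{equation*}
  \sum_{m=0}^{p-1}\beta^{\eta}_{p,m}
  =
  J_{p}\bigl(K;e^{2\pi\i X}\bigr),
\end{equation*}
reproducing the Habiro--Le value appearing on the right-hand side of~\eqref{eq:QMCCJ}.

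After this reduction, one uses Lemma~\ref{lem:T_N_L_2}, extended to $E_{M,\eta}$, to replace $E_{M,\eta}$ by $\Li_2$ up to $O(1/N)$, then applies Ohtsuki's Poisson summation formula~\cite[Prop.~4.2]{Ohtsuki:QT2016} and the saddle-point method to each of the $p$ remaining one-dimensional sums. The saddle equation is the A-polynomial holonomy equation at the meridian eigenvalue $e^{u/2}$; its unique nearby geometric solution produces the Neumann--Zagier potential $S_K(u)$ of Conjecture~\ref{conj:PVC}, its Hessian gives the Gaussian factor $\bigl(T_K(u)/\hbar_\eta(X)\bigr)^{1/2}$, the leftover Pochhammer prefactor $(1-q^M)/(1-q)$ supplies the prefactor $\sqrt{-\pi}/\bigl(2\sinh(u/2)\bigr)$, and the accumulated residue-class constants combine into the $\eta$-dependent, $p$-independent factor $C_{K,\eta}(u)$. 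This completes the identification of the right-hand side of~\eqref{eq:QMCCJ}.

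The main obstacles are twofold. First, the contour estimates of Proposition~\ref{prop:Phi}---the existence of the hexagonal region $E_m$, the two connected components of $D_m\cap E_m$, and the monotonicity producing the paths $\overline{R}_m$, $\underline{R}_m$---depend delicately on $\eta$ through the position of the new pole lattice; the analog proposition for general $\eta$ is a substantial but essentially tractable piece of one-dimensional complex analysis, whose detailed geometry will depend on $a/c\pmod 1$. Second, the conjecture is asserted for \emph{every} hyperbolic knot, yet no closed Habiro--Le-type sum is available outside a small family; the natural substitute is Habiro's cyclotomic expansion or a state-integral presentation, and carrying out the multidimensional saddle-point analysis on those integrals---proving that only the geometric saddle contributes to leading order in the appropriate contour deformation---is the principal analytic difficulty, closely tied to the open generic volume conjecture.
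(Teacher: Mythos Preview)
The statement you are addressing is a \emph{conjecture}, not a theorem: the paper offers no proof of Conjecture~\ref{conj:QMCCJ}. It is motivated by Theorem~\ref{thm:main} (the special case $K=E$, $\eta=\begin{pmatrix}0&-1\\1&0\end{pmatrix}$, real $0<u<\kappa$) together with the remark that ``some computer experiments indicate the following conjecture.'' There is therefore nothing in the paper to compare your argument against.

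What you have written is not a proof but a program, and you say as much in your final paragraph. The two ``obstacles'' you list are not technicalities but the entire content of the conjecture. For general $\eta$ even with $K=E$, you have not defined $E_{M,\eta}$, not stated the analogues of Lemmas~\ref{lem:gamma/2}--\ref{lem:z_z+1}, and not verified that the residue-class boundary terms really assemble into a $p$-independent constant $C_{K,\eta}(u)$ times $J_p(K;e^{2\pi\i X})$; the claim that $\sum_m\beta^{\eta}_{p,m}=J_p(K;e^{2\pi\i X})$ is asserted without any supporting computation. For general hyperbolic $K$, the situation is worse: as you note, there is no Habiro--Le sum to telescope, and establishing that only the geometric saddle contributes in a multidimensional state-integral is tantamount to proving the volume conjecture itself, which is open. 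So the proposal correctly identifies the shape of the expected argument, but it does not close any of the gaps that make Conjecture~\ref{conj:QMCCJ} a conjecture rather than a theorem.
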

Note that $cN+dp$ comes from the denominator of $\eta(N/p)=\eta\left(X\bigm|_{u=0}\right)$.
\begin{rem}
Compare the exponent $1/2$ of $1/\hbar_{\eta}(X)=\frac{cX+d}{2c\pi\i}$ in \eqref{eq:QMCCJ} with $3/2$ in \eqref{eq:modularity_Zagier}.
Our modularity would have weight $1/2$ rather than $3/2$.
\end{rem}
\begin{rem}
Since $(-\eta)(X)=\eta(X)$, we may assume that $c\ge0$.
\par
If $c=0$, then $\eta=\pm\begin{pmatrix}1&k\\0&1\end{pmatrix}$ for some integer $k$.
Since $\eta(X)=X+k$, we have $\exp\bigl(2\pi\i\eta(X)\bigr)=\exp(2\pi\i X)$ and so $J_{p}\left(E;e^{2\pi\i\eta(X)}\right)=J_{p}\left(E;e^{2\pi\i X}\right)$.
\end{rem}
\begin{rem}
When $p=1$ and $\eta=\begin{pmatrix}0&-1\\1&0\end{pmatrix}$, \eqref{eq:QMCCJ} becomes
\begin{multline*}
  J_{N}\left(K;e^{-(u+2\pi\i)/N}\right)
  \\
  \underset{N\to\infty}{\sim}
  C_{K,\eta}(u)\frac{\sqrt{-\pi}}{2\sinh(u/2)}
  \left(T_K(u)\right)^{1/2}\left(\frac{N}{u+2\pi\i}\right)^{1/2}
  \exp\left(\frac{N\times S_K(u)}{u+2\pi\i}\right),
\end{multline*}
which coincides with \cite[Conjecture~1.6]{Murakami:JTOP2013} with $C_{K,\eta}(u)=1$.
See also \cite{Dimofte/Gukov:Columbia,Gukov/Murakami:FIC2008}.
Strictly speaking, we need to take the mirror image $\overline{K}$ of $K$ because $J_N\left(\overline{K};e^{-(u+2\pi\i)/N}\right)=J_N\left(K;e^{(u+2\pi\i)/N}\right)$.
\end{rem}

\section{Lemmas}\label{sec:lemmas}
In this section we prove lemmas that we use.
\begin{proof}[Proof of Lemma~\ref{lem:T_N_convergence}]
Recall that $\xi=u+2p\pi\i$ and $\gamma=\frac{\xi}{2N\pi\i}$.
\par
Since $\Re{\gamma}=p/N>0$, $\sinh(\gamma x)\underset{N\to\infty}{\sim}\frac{e^{\xi x}}{2}$ and $\sinh(\gamma x)\underset{N\to-\infty}{\sim}\frac{-e^{-\xi x}}{2}$.
So we have
\begin{align*}
  \frac{e^{(2z-1)x}}{x\sinh(x)\sinh(\gamma x)}
  &\underset{N\to\infty}{\sim}
  \frac{1}{2x}e^{(2z-\gamma-2)x},
  \\
  \intertext{and}
  \frac{e^{(2z-1)x}}{x\sinh(x)\sinh(\gamma x)}
  &\underset{N\to-\infty}{\sim}
  \frac{-1}{2x}e^{(2z+\gamma)x}.
\end{align*}
Therefore if $-\Re\gamma/2<\Re{z}<1+\Re\gamma/2$, then the integral converges, completing the lemma.
\end{proof}
The following proof is almost the same as \cite[Proposition~2.8]{Murakami/Tran:2021}.
See also \cite[Proposition~A.1]{Ohtsuki:QT2016}.
\begin{proof}[Proof of Lemma~\ref{lem:T_N_L_2}]
We will show that $T_N(z)=\frac{N}{\xi}\L_2(z)+O(1/N)$.
\par
Recalling that $\xi=2N\pi\gamma\i$, we have
\begin{align*}
  \left|T_N(z)-\frac{N}{\xi}\L_2(z)\right|
  &=
  \frac{1}{4}\int_{\Rpath}
  \left|
    \frac{e^{(2z-1)x}}{\gamma x^2\sinh(x)}\left(\frac{\gamma x}{\sinh(\gamma x)}-1\right)
  \right|\,dx
  \\
  &\le
  \frac{N\pi}{2|\xi|}\int_{\Rpath}
  \left|
    \frac{e^{(2z-1)x}}{x^2\sinh(x)}\left(\frac{\gamma x}{\sinh(\gamma x)}-1\right)
  \right|\,dx.
\end{align*}
Since the Taylor expansion of $\frac{\sinh(y)}{y}$ around $y=0$ is $1+\frac{y^2}{6}+\cdots$, we have $\frac{y}{\sinh(y)}=1-\frac{y^2}{6}+o(y^2)$ as $y\to0$.
Therefore, we have $\left|\frac{\gamma x}{\sinh(\gamma x)}-1\right|\le\frac{c|x|^2}{N^2}$ for some constant $c>0$ and so
\begin{equation*}
  \left|T_N(z)-\frac{N}{\xi}\L_2(z)\right|
  <
  \frac{c'}{N}\int_{\Rpath}
  \left|\frac{e^{(2z-1)x}}{\sinh(x)}\right|\,dx,
\end{equation*}
where we put $c':=\frac{c\pi}{2|\xi|}$.
\par
We put
\begin{align*}
  I_{+}
  &:=
  \int_{1}^{\infty}\left|\frac{e^{(2z-1)x}}{\sinh(x)}\right|\,dx,
  \\
  I_{-}
  &:=
  \int_{-\infty}^{-1}\left|\frac{e^{(2z-1)x}}{\sinh(x)}\right|\,dx,
  \\
  I_{0}
  &:=
  \int_{|x|=1,\Im{x}\ge0}\left|\frac{e^{(2z-1)x}}{\sinh(x)}\right|\,dx.
\end{align*}
We have
\begin{align*}
  I_{+}
  &\le
  \int_{1}^{\infty}\frac{2e^{2x\Re{z}-x}}{e^{x}-e^{-x}}\,dx
  =
  \int_{1}^{\infty}\frac{2e^{2x(\Re{z}-1)}}{1-e^{-2x}}\,dx
  \le
  \frac{2}{1-e^{-2}}\int_{1}^{\infty}e^{-2\nu x}\,dx
  \\
  &=
  \frac{e^{-2\nu}}{\nu(1-e^{-2})},
\end{align*}
where we use the assumption $\Re{z}\le1-\nu$.
\par
Similarly, we have
\begin{align*}
  I_{-}
  &\le
  \int_{-\infty}^{-1}\frac{2e^{2x\Re{z}-x}}{e^{-x}-e^{x}}\,dx
  =
  \int_{-\infty}^{-1}\frac{2e^{2x\Re{z}}}{1-e^{2x}}\,dx
  \le
  \frac{2}{1-e^{-2}}\int_{-\infty}^{-1}e^{2\nu x}\,dx
  \\
  &=
  \frac{e^{-2\nu}}{\nu(1-e^{-2})},
\end{align*}
where we use the assumption $\Re{z}\ge\nu$.
\par
Putting $x=e^{t\i}$ ($0\le t\le\pi$) and $L:=\max_{|x|=1,\Im{x}\ge0}|\sinh(x)|$, we have
\begin{equation*}
  I_{0}
  =
  \int_{0}^{\pi}
  \left|\frac{e^{(2z-1)e^{t\i}}}{\sinh\left(e^{t\i}\right)}\right|
  \times\left|\i e^{t\i}\right|\,dt
  \le
  \frac{1}{L}
  \int_{0}^{\pi}
  e^{(2\Re{z}-1)\cos{t}-2\Im{z}\sin{t}}\,dt,
\end{equation*}
which is bounded from the above because both $\Re{z}$ and $\Im{z}$ are bounded.
\par
Therefore, we see that $I_{+}+I_{-}+I_{0}$ is bounded from above, which implies that $\left|T_N(z)-\frac{N}{\xi}\L_2(z)\right|=O(1/N)$.
\end{proof}
\begin{proof}[Proof of Lemma~\ref{lem:F_sigma}]
Since $\Re{F(0)}$ coincides with $\Re{\Phi(w_0)}$ in \cite{Murakami:JTOP2013} (see \cite[Remark~1.6]{Murakami/Tran:2021}), we have $\Re{F(0)}>0$ from \cite[Lemma~3.5]{Murakami:JTOP2013}.
\par
Next, we will show that $\xi\bigl(F(\sigma_0)-F(0)\bigr)$ is purely imaginary with positive imaginary part.
Then we conclude that $\Re\bigl(F(\sigma_0)-F(0)\bigr)>0$, since $\xi$ is in the first quadrant.
\par
Since $\varphi(u)$ is purely imaginary, we have $\overline{\Li_2\left(e^{-u-\varphi(u)}\right)}=\Li_2\left(e^{-u+\varphi(u)}\right)$.
So we see that $\xi\bigl(F(\sigma_0)-F(0)\bigr)=\Li_2\left(e^{-u-\varphi(u)}\right)-\Li_2\left(e^{-u+\varphi(u)}\right)+u(\theta+2\pi)\i$ is purely imaginary with imaginary part $2\Im\Li_2\left(e^{-u-\varphi(u)}\right)+u(\theta+2\pi)$, which coincides with $\Im\bigl(\xi\Phi(w_0)\bigr)+2u\pi>0$ in \cite[P.~214]{Murakami:JTOP2013}.
\par
This proves the lemma.
\end{proof}
\begin{proof}[Proof of Lemma~\ref{lem:F_P_12}]
We have
\begin{align*}
  &\xi\left(\Phi_m(P_{12})-\Phi_m(\sigma_m)\right)
  \\
  =&
  \Li_2\left(-e^{-u-\frac{u((6m+5)\pi+2\theta)}{2p\pi}}\right)
  -
  \Li_2\left(-e^{-u+\frac{u((6m+5)\pi+2\theta)}{2p\pi}}\right)
  \\
  &-
  \Li_2\left(e^{-u-\varphi(u)}\right)
  +
  \Li_2\left(e^{-u+\varphi(u)}\right)
  \\
  &+
  \frac{(2m+1)u\xi}{2p}+\frac{u^2(2(m+1)\pi+\theta)}{p\pi}
  -
  u\bigl(2(m+1)\pi+\theta\bigr)\i.
\end{align*}
Its real part is
\begin{equation*}
  \Li_2\left(-e^{-u-q_m(u)}\right)
  -
  \Li_2\left(-e^{-u+q_m(u)}\right)
  +
  uq_m(u),
\end{equation*}
where we put $q_m(u):=\frac{u((6m+5)\pi+2\theta)}{2p\pi}$, and its imaginary part is
\begin{equation*}
  -2\Im\Li_2\left(e^{-u-\varphi(u)}\right)
  -
  u(\pi+\theta).
\end{equation*}
Then we have
\begin{align*}
  &\frac{|\xi|^2}{u}\Re\left(F(P_{12})-F(\sigma_m)\right)
  \\
  =&
  \Re\left(\xi\bigl(F(P_{12})-F(\sigma_m)\bigr)\right)
  +
  \frac{2p\pi}{u}\Im\left(\xi\bigl(F(P_{12})-F(\sigma_m)\bigr)\right)
  \\
  =&
  \Li_2\left(-e^{-u-q_m(u)}\right)
  -
  \Li_2\left(-e^{-u+q_m(u)}\right)
  +uq_m(u)
  \\
  &-
  \frac{2p\pi}{u}
  \left(2\Im\Li_2\left(e^{-u-\varphi(u)}\right)+u(\pi+\theta)\right).
\end{align*}
By using the inequality $2\Im\Li_2\left(e^{-u-\varphi(u)}\right)+u\theta>0$ in \cite[\S~7]{Murakami:JTOP2013}, this is less than $c_{p,m}(u)$, where we put
\begin{equation*}
  c_{p,m}(u)
  :=
  \Li_2\left(-e^{-u-q_m(u)}\right)
  -
  \Li_2\left(-e^{-u+q_m(u)}\right)
  +uq_m(u)-2p\pi^2.
\end{equation*}
Now we have
\begin{equation*}
  \frac{d}{d\,u}c_{p,m}(u)
  =
  q'_m(u)\log\bigl(2\cosh{u}+2\cosh{q_m(u)}\bigr)
  +
  \log\left(\frac{e^{q_m(u)}+e^{-u}}{1+e^{-u+q_m(u)}}\right),
\end{equation*}
which can be easily seen to be positive.
Since $u<\kappa$, it suffices to prove $c_{p,m}(\kappa)<0$.
Since $\varphi(\kappa)=0$, we have
\begin{equation*}
  c_{p,m}(\kappa)
  =
  \Li_2\left(-e^{-\kappa(1+\frac{6m+5}{2p})}\right)
  -
  \Li_2\left(-e^{-\kappa(1-\frac{6m+5}{2p})}\right)
  +\frac{(6m+5)\kappa^2}{2p}-2p\pi^2,
\end{equation*}
which is increasing with respect to $m$, fixing $p$.
We will prove that $c_{p,p-1}(\kappa)<0$.
\par
We calculate
\begin{equation*}
  c_{p,p-1}(\kappa)
  =
  \Li_2\left(-e^{\kappa(\frac{1}{2p}-4)}\right)
  -
  \Li_2\left(-e^{\kappa(-\frac{1}{2p}+2)}\right)
  +\left(3-\frac{1}{2p}\right)\kappa^2-2p\pi^2.
\end{equation*}
The derivative of $c_{p,p-1}(\kappa)$ with respect to $p$ equals
\begin{equation*}
  \frac{\kappa}{2p^2}\log\left(3+2\cosh\left(\kappa\left(3-\frac{1}{2p}\right)\right)\right)
  -2\pi^2,
\end{equation*}
which is less than $-2\pi^2+\log(6+2\cosh(3\kappa)=-18.274\ldots<0$.
It follows that $c_{p,p-1}(\kappa)<c_{1,0}(\kappa)=-14.9942\ldots<0$.
\par
This shows that $\Re\bigl(F(P_{12})-F(\sigma_m)\bigr)<0$, proving the lemma.
\end{proof}
Before proving Lemma~\ref{lem:f_N_end_points}, we prepare the following lemma.
\par
\begin{lem}\label{lem:J_m/p}
Put $g(x):=4\sinh\left(\frac{\xi}{2}(1+x)\right)\sinh\left(\frac{\xi}{2}(1-x)\right)$.
For an integer $0\le m\le p$, there exists $\delta_m>0$ such that $|g(l/N)|<1$ if $\frac{m}{p}-\delta_m<\frac{l}{N}<\frac{m}{p}+\delta_m$.
\end{lem}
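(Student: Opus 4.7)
The plan is to prove the pointwise inequality $|g(m/p)| < 1$ and then deduce the neighborhood statement by continuity. First I would evaluate $g(m/p)$ explicitly. Since $\xi = u + 2p\pi\i$, we can decompose
\[
  \frac{\xi}{2}\left(1 \pm \frac{m}{p}\right) = \frac{u}{2}\left(1 \pm \frac{m}{p}\right) + (p \pm m)\pi\i.
\]
Using $\sinh(z + k\pi\i) = (-1)^k\sinh(z)$, the two imaginary shifts contribute a combined factor of $(-1)^{(p+m)+(p-m)} = (-1)^{2p} = 1$, yielding
\[
  g(m/p) = 4\sinh\!\left(\tfrac{u}{2}(1 + \tfrac{m}{p})\right)\sinh\!\left(\tfrac{u}{2}(1 - \tfrac{m}{p})\right).
\]
The product-to-sum identity $4\sinh(A)\sinh(B) = 2\cosh(A+B) - 2\cosh(A-B)$ applied with $A+B = u$ and $A-B = um/p$ then gives
\[
  g(m/p) = 2\cosh(u) - 2\cosh(um/p),
\]
which is real and nonnegative.

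Next I would bound this quantity using the hypothesis $0 < u < \kappa = \arccosh(3/2)$. Since $0 \le m/p \le 1$ and $\cosh$ is nondecreasing on $[0,u]$, we have $1 \le \cosh(um/p) \le \cosh(u) < 3/2$, so $0 \le g(m/p) < 2\cdot\tfrac{3}{2} - 2 = 1$, confirming $|g(m/p)| < 1$. Because $g$ is entire in $x$, the set $\{x \in \R : |g(x)| < 1\}$ is open and contains $m/p$, so one can choose $\delta_m > 0$ small enough that $(m/p - \delta_m,\, m/p + \delta_m)$ lies inside it; specializing to the rational sample $x = l/N$ gives the lemma. There is no real obstacle: the only mildly delicate point is checking that the sign factors $(-1)^{p \pm m}$ combine to $+1$, which works precisely because their exponents sum to the even integer $2p$.
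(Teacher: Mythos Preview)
Your proof is correct and follows essentially the same approach as the paper: compute $g(m/p)=2\cosh u-2\cosh(um/p)$, observe that this lies in $[0,1)$ because $0<u<\arccosh(3/2)$ and $0\le m/p\le 1$, and conclude by continuity of $g$. The paper merely states the formula for $g(m/p)$ without the sign-tracking you supply, and it bounds $g(m/p)$ via monotonicity in $m$ rather than directly via monotonicity of $\cosh$, but these are cosmetic differences.
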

\begin{proof}
For an integer $0\le m\le p$, we can easily see that
\begin{equation*}
  g(m/p)
  =
  2\bigl(\cosh{u}-\cosh(mu/p)\bigr).
\end{equation*}
So we conclude that $g(m/p)$ is monotonically decreasing with respect to $m$.
Therefore we have $0=g(1)\le g(m/p)\le g(0)=2\bigl(\cosh(u)-1\bigr)<2\cosh(\kappa)-2=1$.
So we have $0\le g(m/p)<1$.
\par
Therefore, there exists $\delta_m>0$ such that $|g(x)|<1$ if $|x-m/p|<\delta_m$, completing the proof.
\end{proof}
\begin{proof}[Proof of Lemma~\ref{lem:f_N_end_points}]

From (2)-(ii) of the proof of Proposition~\ref{prop:Phi}, we know that $\frac{m+1}{p}\in D_{m}$, that is, $\Re{\Phi_m\left(\frac{m+1}{p}\right)}<\Re{\Phi_m(\sigma_{m})}$.
Therefore there exists $\varepsilon>0$ such that $\Re{\Phi_m\left(\frac{m+1}{p}\right)}<\Re{\Phi_m(\sigma_m)}-2\varepsilon$ for $m=0,1,2,\dots,p-1$.
\par
Next, we show that there exists $\tilde{\delta}_m>0$ such that if $\frac{m+1}{p}-\tilde{\delta}_m<\frac{k}{N}<\frac{m+1}{p}$, then \eqref{eq:f_N_sigma} holds.
\par
We can choose $\delta'_m>0$ so that $\Re{\Phi_m\left(\frac{k}{N}\right)}<\Re{\Phi_m\left(\frac{m+1}{p}\right)}+\varepsilon$ if $(m+1)/p-\delta'_m<k/N<(m+1)/p$.
So we have $\Re{\Phi_m\left(\frac{k}{N}\right)}<\Re{\Phi_m(\sigma_m)}-\varepsilon$.
Now recall that $f_N(z)$ converges to $F(z)$ in the region \eqref{eq:T_N_f_N_converge}.
Since we have
\begin{align*}
  &\Re\left(\frac{2k+1}{2N}-\frac{2(m-1)\pi\i}{\xi}\right)
  +
  \frac{u}{2p\pi}
  \Im\left(\frac{2k+1}{2N}-\frac{2m\pi\i}{\xi}\right)
  \\
  =&
  \frac{2k+1}{2N}-\frac{m}{p},
  \\
  &\Re\left(\frac{2k+1}{2N}-\frac{2m\pi\i}{\xi}\right)
  -
  \frac{2p\pi}{u}
  \Im\left(\frac{2k+1}{2N}-\frac{2m\pi\i}{\xi}\right)
  \\
  =&
  \frac{2k+1}{2N},
\end{align*}
if $\nu/p+m/p-1/(2N)\le k/N\le(m+1)/p-\nu/p-1/(2N)$ and $k/N\le2M\pi/u+1-1/(2N)$, then $f_N\left(\frac{2k+1}{2N}-\frac{2m\pi\i}{\xi}\right)$ converges to
\begin{equation*}
  F\left(\frac{k}{N}-\frac{2m\pi\i}{\xi}\right)
  =
  \Phi_m\left(\frac{k}{N}\right)
\end{equation*}
as $N\to\infty$.
Therefore we see
\begin{align*}
  \Re{f_N\left(\frac{2k+1}{2N}-\frac{2m\pi\i}{\xi}\right)}
  &<
  \Re\Phi_m(\sigma_m)-\varepsilon
  \\
  &=
  \Re{F(\sigma_0)}-\varepsilon
\end{align*}
if we choose $\nu$ small enough so that $\delta'_m>\frac{\nu}{p}+\frac{1}{2N}$ (and $N$ is large enough).
Note that so far $k$ should satisfy the inequalities
\begin{equation}\label{eq:right_end}
  \frac{m+1}{p}-\delta'_m<\frac{k}{N}\le\frac{m+1}{p}-\frac{\nu}{p}-\frac{1}{2N}.
\end{equation}
\par
On the other hand, putting $h_N(k):=\prod_{l=1}^{k}g\left(\frac{l}{N}\right)$, we have
\begin{equation}\label{eq:h_N}
  \left|h_N(k)\right|>\left|h_N(k')\right|
\end{equation}
if $\frac{m}{p}-\delta_m<\frac{k}{N}<\frac{k'}{N}<\frac{m}{p}+\delta_m$ from Lemma~\ref{lem:J_m/p}.
Note that if $\frac{m}{p}\le\frac{k}{N}<\frac{m+1}{p}$, we have
\begin{equation}\label{eq:h_N_f_N}
  h_N(k)
  =
  \frac{1-e^{-4pN\pi^2/\xi}}{2\sinh(u/2)}\beta_{p,m}
  \exp\left(N\times f_N\left(\frac{2k+1}{2N}-\frac{2m\pi\i}{\xi}\right)\right)
\end{equation}
from \eqref{eq:J_f_N}.
From \eqref{eq:h_N} and \eqref{eq:h_N_f_N}, if $\frac{m+1}{p}-\delta_{m+1}<\frac{k}{N}<\frac{k'}{N}<\frac{m+1}{p}$, then we have
\begin{align*}
  \Re{f_N\left(\frac{2k+1}{2N}-\frac{2m\pi\i}{\xi}\right)}
  &=
  \frac{1}{N}
  \log\left|\frac{2\sinh(\xi/2)}{1-e^{-4pN\pi^2/\xi}}\beta_{p,m}^{-1}h_N(k)\right|
  \\
  &>
  \frac{1}{N}
  \log\left|\frac{2\sinh(\xi/2)}{1-e^{-4pN\pi^2/\xi}}\beta_{p,m}^{-1}h_N(k')\right|
  \\
  &=
  \Re{f_N\left(\frac{2k'+1}{2N}-\frac{2m\pi\i}{\xi}\right)},
\end{align*}
which means that $\Re{f_N\left(\frac{2k+1}{2N}-\frac{2m\pi\i}{\xi}\right)}$ is monotonically decreasing with respect to $k$ if $\frac{m+1}{p}-\delta_{m+1}<\frac{k}{N}<\frac{m+1}{p}$.
Combined with \eqref{eq:right_end}, we conclude that \eqref{eq:f_N_sigma} holds if $\frac{m+1}{p}-\delta'_m<\frac{k}{N}<\frac{m+1}{p}$, choosing $\delta'_m$ less than $\delta_{m+1}$ if necessary.
\par
Now, we show that for $m=1,2,\dots,p-1$, \eqref{eq:f_N_sigma} holds if $\frac{m}{p}\le\frac{k}{N}<\frac{m}{p}+\delta_m$.
\par
From \eqref{eq:h_N} and \eqref{eq:h_N_f_N}, if $\frac{m}{p}-\delta'_m<\frac{k'}{N}<\frac{m}{p}\le\frac{k}{N}<\frac{m}{p}+\delta_m$, we have
\begin{align*}
  \Re{f_N\left(\frac{2k+1}{2N}-\frac{2m\pi\i}{\xi}\right)}
  &=
  \frac{1}{N}
  \log\left|\frac{2\sinh(\xi/2)}{1-e^{-4pN\pi^2/\xi}}\beta_{p,m}^{-1}h_N(k)\right|
  \\
  &<
  \frac{1}{N}
  \log\left|\frac{2\sinh(\xi/2)}{1-e^{-4pN\pi^2/\xi}}\beta_{p,m}^{-1}h_N(k')\right|
  \\
  &<
  \frac{1}{N}
  \log\left|\frac{2\sinh(\xi/2)}{1-e^{-4pN\pi^2/\xi}}\beta_{p,m-1}^{-1}h_N(k')\right|
  \\
  &=
  \Re{f_N\left(\frac{2k'+1}{2N}-\frac{2(m-1)\pi\i}{\xi}\right)},
\end{align*}
which is less than $\Re{F(\sigma_0)}-\varepsilon$ from the argument above.
Here the second inequality follows since
\begin{align*}
  \left|\frac{\beta_{p,m}}{\beta_{p,m-1}}\right|
  &=
  2\Bigl|\cosh(4pN\pi^2/\xi)-\cosh(4mN\pi^2/\xi)\bigr|
  \\
  &\underset{N\to\infty}{\sim}
  \frac{1}{2}\exp\left(\frac{4pu\pi^2\times N}{|\xi|^2}\right).
\end{align*}
So \eqref{eq:f_N_sigma} holds.
\par
Finally, we consider the case where $m=0$.
Since $h_N(0)=\beta_{p,0}=1$, we have
\begin{align*}
  \Re{f_N\left(\frac{1}{2N}\right)}
  &=
  \frac{1}{N}\log\left|\frac{2\sinh(\xi/2)}{1-e^{-4pN\pi^2/\xi}}\right|
  \le
  \frac{1}{N}
  \log\left|\frac{2\sinh(\xi/2)}{1+\left|e^{-4pN\pi^2/\xi}\right|}\right|
  \\
  &=
  \frac{1}{N}
  \log\left|\frac{2\sinh(\xi/2)}{1+e^{-4puN\pi^2/|\xi|^2}}\right|
  \to0
  \quad\text{($N\to\infty$)}.
\end{align*}
Since $\Re{F(\sigma_0)}>0$ from Lemma~\ref{lem:F_sigma}, \eqref{eq:f_N_sigma} holds if $k/N<\delta_0$ and $N$ is sufficiently large.
\par
As a result, if we put $\tilde{\delta}_m:=\min\{\delta'_m,\delta_m\}$, \eqref{eq:f_N_sigma} holds.
\end{proof}

\appendix
\section{The case where $(p,N)\ne1$}
In this appendix, we will calculate $\prod_{l=1}^{k}\left(1-e^{(N-l)\xi/N}\right)\left(1-e^{(N+l)\xi/N}\right)$ assuming $(p,N)=c>1$.
Put $N':=N/c\in\N$ and $p':=p/c\in\N$.
\par
Note that $jN/p$ ($1\le j\le N-1$, $j\in\N$) is an integer if and only $j$ is a multiple of $p'$.
\par
If $k<N'$, then we can choose an integer $m<p'$ so that $mN/p<k<(m+1)N/p$ because $N/p,2N/p,\dots,(p'-1)N/p$ are not integers.
Therefore from \eqref{eq:qfac_E_N}, we have
\begin{align*}
  &\prod_{l=1}^{k}(1-e^{(N-l)\xi/N})(1+e^{(N+l)\xi/N})
  \\
  =&
  \frac{1-e^{4pN\pi^2/\xi}}{1-e^{\xi}}
  \left(
    \prod_{j=1}^{m}
    \left(1-e^{4(p-j)N\pi^2/\xi}\right)\left(1-e^{4(p+j)N\pi^2/\xi}\right)
  \right)
  \\
  &\times
  \frac{E_N\bigl((N-k-1/2)\gamma-p+m+1\bigr)}{E_N\bigl((N+k+1/2)\gamma-p-m\bigr)}.
\end{align*}
\par
If $k=N'$, we have
\begin{align*}
  &\prod_{l=1}^{N'}\left(1-e^{(N-l)\xi/N}\right)\left(1-e^{(N+l)\xi/N}\right)
  \\
  =&
  \left(\prod_{l=1}^{N'-1}\left(1-e^{(N-l)\xi/N}\right)\left(1-e^{(N+l)\xi/N}\right)\right)
  \left(1-e^{(N-N')\xi/N}\right)\left(1-e^{(N+N')\xi/N}\right)
  \\
  =&
  \left(1-e^{(c-1)\xi/c}\right)\left(1-e^{(c+1)\xi/c}\right)
  \\
  &\times
  \frac{1-e^{4pN\pi^2/\xi}}{1-e^{\xi}}
  \left(
    \prod_{j=1}^{p'-1}
    \left(1-e^{4(p-j)N\pi^2/\xi}\right)
    \left(1-e^{4(p+j)N\pi^2/\xi}\right)
  \right)
  \\
  &\times
  \frac{E_N\bigl((N-p'+1/2)\gamma-p+p'\bigr)}{E_N\bigl((N+p'-1/2)\gamma-p-p'+1\bigr)},
\end{align*}
since $(p'-1)N/p<N'-1<p'N/p$.
\par
If $k$ is an integer with $nN'\le k<(n+1)N'$, writing $l$ ($0\le l\le k$) as $l=aN'+b$ with $0\le a\le n$ and $0\le b\le N'-1$, we have
\begin{align*}
  &\prod_{l=1}^{k}\left(1-e^{(N-l)\xi/N}\right)\left(1-e^{(N+l)\xi/N}\right)
  \\
  =&
  \prod_{b=1}^{N'-1}
  \left(1-e^{(N-b)\xi/N}\right)\left(1-e^{(N+b)\xi/N}\right)
  \\
  &\times
  \prod_{a=1}^{n-1}\prod_{b=0}^{N'-1}
  \left(1-e^{(N-aN'-b)\xi/N}\right)\left(1-e^{(N+aN'+b)\xi/N}\right)
  \\
  &\times
  \prod_{b=0}^{k-nN'}
  \left(1-e^{(N-nN'-b)\xi/N}\right)\left(1-e^{(N+nN'+b)\xi/N}\right)
  \\
  =&
  \prod_{a=1}^{n-1}
  \left(1-e^{(c-a)\xi/c}\right)\left(1-e^{(c+a)\xi/c}\right)
  \times
  \prod_{a=0}^{n-1}\prod_{b=1}^{N'-1}P_{a,b}
  \\
  &\times
  \left(1-e^{(c-n)\xi/c}\right)\left(1-e^{(c+n)\xi/c}\right)
  \prod_{b=1}^{k-nN'}Q_b,
\end{align*}
where we put
\begin{align*}
  P_{a,b}
  &:=
  \left(1-e^{(N-aN'-b)\xi/N}\right)\left(1-e^{(N+aN'+b)\xi/N}\right)
  \\
  &=
  \left(1-e^{2(N-aN'-b)\pi\i\gamma}\right)\left(1-e^{2(N+aN'+b)\pi\i\gamma}\right),
  \\
  Q_b&:=
  \left(1-e^{(N-nN'-b)\xi/N}\right)\left(1-e^{(N+nN'+b)\xi/N}\right)
  \\
  &=
  \left(1-e^{2(N-nN'-b)\pi\i\gamma}\right)\left(1-e^{2(N+nN'+b)\pi\i\gamma}\right).
\end{align*}
If we choose $i$ ($0\le i\le p'-1$) with $iN'/p'<b<(i+1)N'/p'$, then we have $(p'-ap'-i-1)N/p<N-aN'-b<(p'-ap'-i)N/p$ and  $(p'+ap'+i)N/p<N+aN'+b<(p'+ap'+i+1)N/p$.
So from Corollary~\ref{cor:q_dilog}, we have
\begin{align*}
  &\prod_{b=1}^{N'-1}P_{a,b}
  =
  \prod_{i=0}^{p'-1}\left(\prod_{iN'/p'<b<(i+1)N'/p'}P_{a,b}\right)
  \\
  =&
  \prod_{i=0}^{p'-1}
  \left(
    \prod_{iN'/p'<b<(i+1)N'/p'}
    \frac{E_N\bigl((N-aN'-b-1/2)\gamma-p+ap'+i+1\bigr)}
         {E_N\bigl((N-aN'-b+1/2)\gamma-p+ap'+i+1\bigr)}
  \right.
  \\
  &\phantom{\prod_{i=0}^{p'-1}}\quad\times
  \left.
    \prod_{iN'/p'<b<(i+1)N'/p'}
    \frac{E_N\bigl((N+aN'+b-1/2)\gamma-p-ap'-i\bigr)}
         {E_N\bigl((N+aN'+b+1/2)\gamma-p-ap'-i\bigr)}
  \right)
  \\
  =&
  \prod_{i=0}^{p'-2}
  \left(
  \frac{E_N\bigl((N-aN'-\floor{(i+1)N'/p'}-1/2)\gamma-p+ap'+i+1\bigr)}
       {E_N\bigl((N-aN'-\floor{iN'/p'}-1/2)\gamma-p+ap'+i+1\bigr)}
  \right.
  \\
  &\phantom{\prod_{i=0}^{p'-2}}\quad\times
  \left.
  \frac{E_N\bigl((N+aN'+\floor{iN'/p'}+1/2)\gamma-p-ap'-i\bigr)}
       {E_N\bigl((N+aN'+\floor{(i+1)N'/p'}+1/2)\gamma-p-ap'-i\bigr)}
  \right)
  \\
  &\times
  \frac{E_N\bigl((N-(a+1)N'+1/2)\gamma-p+(a+1)p'\bigr)}
       {E_N\bigl((N-aN'-\floor{(p'-1)N'/p'}-1/2)\gamma-p+(a+1)p'\bigr)}
  \\
  &\times
  \frac{E_N\bigl((N+aN'+\floor{(p'-1)N'/p'}+1/2)\gamma-p-(a+1)p'+1\bigr)}
       {E_N\bigl((N+(a+1)N'-1/2)\gamma-p-(a+1)p'+1\bigr)}.
\end{align*}
Note that the case where $i=p'-1$ is exceptional.
\par
Using Lemma~\ref{lem:z_z+1} with $z=(N-aN'-\floor{iN'/p'}-1/2)\gamma-p+ap'+i$ ($i=1,\dots,p'-2$) and $z=(N+aN'+\floor{(i+1)N'/p'}-1/2)\gamma-p-ap'-i-1$ ($i=1,\dots,p'-2$), this becomes
\begin{align*}
  &
  \prod_{i=1}^{p'-1}
  \left(
    \left(1-e^{4(p-ap'-i)N\pi^2/\xi}\right)
    \left(1-e^{4(p+ap'+i)N\pi^2/\xi}\right)
  \right)
  \\
  \times&
  \frac{E_N\bigl((N+aN'+1/2)\gamma-p-ap'\bigr)}
       {E_N\bigl((N-aN'-1/2)\gamma-p+ap'+1\bigr)}
  \\
  \times&
  \frac{E_N\bigl((N-(a+1)N'+1/2)\gamma-p+(a+1)p'\bigr)}
       {E_N\bigl((N+(a+1)N'-1/2)\gamma-p-(a+1)p'+1\bigr)}.
\end{align*}
Therefore we have
\begin{align*}
  &\prod_{a=0}^{n-1}\prod_{b=1}^{N'-1}P_{a,b}
  \\
  =&
  \prod_{a=0}^{n-1}
  \prod_{i=1}^{p'-1}
  \left(
    \left(1-e^{4(p-ap'-i)N\pi^2/\xi}\right)
    \left(1-e^{4(p+ap'+i)N\pi^2/\xi}\right)
  \right)
  \\
  &\times
  \prod_{a=0}^{n-1}
  \left(
    \frac{E_N\bigl((N+aN'+1/2)\gamma-p-ap'\bigr)}
         {E_N\bigl((N-aN'-1/2)\gamma-p+ap'+1\bigr)}
  \right.
  \\
  &\phantom{\prod_{a=0}^{n-1}}\quad\times
  \left.
    \frac{E_N\bigl((N-(a+1)N'+1/2)\gamma-p+(a+1)p'\bigr)}
         {E_N\bigl((N+(a+1)N'-1/2)\gamma-p-(a+1)p'+1\bigr)}
  \right)
  \\
  =&
  \frac{1-e^{4pN\pi^2/\xi}}{1-e^{\xi}}
  \prod_{a=0}^{n-1}
  \prod_{i=1}^{p'-1}
  \left(
    \left(1-e^{4(p+ap'+i)N\pi^2/\xi}\right)
    \left(1-e^{4(p-ap'-i)N\pi^2/\xi}\right)
  \right)
  \\
  &\times
  \prod_{a=1}^{n-1}
  \left(
    \frac{1-e^{4(p+ap')N\pi^2/\xi}}{1-e^{(c+a)\xi/c}}
    \times
    \frac{1-e^{4(p-ap')N\pi^2/\xi}}{1-e^{(c-a)\xi/c}}
  \right)
  \\
  &\times
  \frac{E_N\bigl((N-nN'+1/2)\gamma-p+np'\bigr)}
       {E_N\bigl((N+nN'-1/2)\gamma-p-np'+1\bigr)}
  \\
  =&
  \frac{1-e^{4pN\pi^2/\xi}}{1-e^{\xi}}
  \times
  \frac{\prod_{l=1}^{np'-1}\left(1-e^{4(p+l)N\pi^2/\xi}\right)
                           \left(1-e^{4(p-l)N\pi^2/\xi}\right)}
       {\prod_{a=1}^{n-1}\left(1-e^{(c+a)\xi/c}\right)\left(1-e^{(c-a)\xi/c}\right)}
  \\
  &\times
  \frac{E_N\bigl((N-nN'+1/2)\gamma-p+np'\bigr)}
       {E_N\bigl((N+nN'-1/2)\gamma-p-np'+1\bigr)},
\end{align*}
where we use Lemma~\ref{lem:gamma/2} for $w=(N+aN')\gamma-p-ap'$ ($a=0,1,\dots,n-1$) and $w=(N-aN')\gamma-p+ap'$ ($a=1,2,\dots,n-1$) at the second equality.
\par
Similarly, letting $h$ ($0\le h\le p'-1$) be an integer with $hN'/p'<k-nN'<(h+1)N'/p'$, from Corollary~\ref{cor:q_dilog} we have
\begin{align*}
  &\prod_{b=1}^{k-nN'}Q_b
  =
  \prod_{i=0}^{h-1}\left(\prod_{iN'/p'<b<(i+1)N'/p'}Q_b\right)
  \times
  \prod_{hN'/p'<b\le k-nN'}Q_b
  \\
  =&
  \prod_{i=0}^{h-1}
  \left(
    \prod_{iN'/p'<b<(i+1)N'/p'}
    \frac{E_N\bigl((N-nN'-b-1/2)\gamma-p+np'+i+1\bigr)}
         {E_N\bigl((N-nN'-b+1/2)\gamma-p+np'+i+1\bigr)}
  \right.
  \\
  &\phantom{\prod_{i=0}^{h-1}}\quad\times
  \left.
    \prod_{iN'/p'<b<(i+1)N'/p'}
    \frac{E_N\bigl((N+nN'+b-1/2)\gamma-p-np'-i\bigr)}
         {E_N\bigl((N+nN'+b+1/2)\gamma-p-np'-i\bigr)}
  \right)
  \\
  &\times
  \prod_{hN'/p'<b\le k-nN'}
  \frac{E_N\bigl((N-nN'-b-1/2)\gamma-p+np'+h+1\bigr)}
       {E_N\bigl((N-nN'-b+1/2)\gamma-p+np'+h+1\bigr)}
  \\
  &\times
  \prod_{hN'/p'<b\le k-nN'}
  \frac{E_N\bigl((N+nN'+b-1/2)\gamma-p-np'-h\bigr)}
       {E_N\bigl((N+nN'+b+1/2)\gamma-p-np'-h\bigr)}
  \\
  =&
  \prod_{i=0}^{h-1}
  \left(
  \frac{E_N\bigl((N-nN'-\floor{(i+1)N'/p'}-1/2)\gamma-p+np'+i+1\bigr)}
       {E_N\bigl((N-nN'-\floor{iN'/p'}-1/2)\gamma-p+np'+i+1\bigr)}
  \right.
  \\
  &\phantom{\prod_{i=0}^{h-1}}\quad\times
  \left.
  \frac{E_N\bigl((N+nN'+\floor{iN'/p'}+1/2)\gamma-p-np'-i\bigr)}
       {E_N\bigl((N+nN'+\floor{(i+1)N'/p'}+1/2)\gamma-p-np'-i\bigr)}
  \right)
  \\
  &\times
  \frac{E_N\bigl((N-k-1/2)\gamma-p+np'+h+1\bigr)}
       {E_N\bigl((N-nN'-\floor{hN'/p'}-1/2)\gamma-p+np'+h+1\bigr)}
  \\
  &\times
  \frac{E_N\bigl((N+nN'+\floor{hN'/p'}+1/2)\gamma-p-np'-h\bigr)}
       {E_N\bigl((N+k+1/2)\gamma-p-np'-h\bigr)}.
\end{align*}
Using Lemma~\ref{lem:z_z+1} with $z=(N-nN'-\floor{iN'/p'}-1/2)\gamma-p+np'+i$ and $z=(N+nN'+\floor{iN'/p'}+1/2)\gamma-p-np'-i$ ($i=1,2,\dots,h$), we have
\begin{align*}
  \prod_{b=1}^{k-nN'}Q_b
  &=
  \prod_{i=1}^{h}
  \left(
    \left(1-e^{4(p-np'-i)N\pi^2/\xi}\right)
    \left(1-e^{4(p+np'+i)N\pi^2/\xi}\right)
  \right)
  \\
  &\times
  \frac{E_N\bigl((N+nN'+1/2)\gamma-p-np'\bigr)}
       {E_N\bigl((N-nN'-1/2)\gamma-p+np'+1\bigr)}
  \\
  &\times
  \frac{E_N\bigl((N-k-1/2)\gamma-p+np'+h+1\bigr)}
       {E_N\bigl((N+k+1/2)\gamma-p-np'-h\bigr)}.
\end{align*}
\par
Therefore, we finally have
\begin{align*}
  &\prod_{l=1}^{k}\left(1-e^{(N-l)\xi/N}\right)\left(1-e^{(N+l)\xi/N}\right)
  \\
  =&
  \left(1-e^{(c-n)\xi/c}\right)\left(1-e^{(c+n)\xi/c}\right)
  \\
  &\times
  \frac{1-e^{4pN\pi^2/\xi}}{1-e^{\xi}}
  \times
  \prod_{l=1}^{np'-1}\left(1-e^{4(p-l)N\pi^2/\xi}\right)\left(1-e^{4(p+l)N\pi^2/\xi}\right)
  \\
  &\times
  \frac{E_N\bigl((N-nN'+1/2)\gamma-p+np'\bigr)}
       {E_N\bigl((N+nN'-1/2)\gamma-p-np'+1\bigr)}
  \\
  &\times
  \prod_{i=1}^{h}
  \left(
    \left(1-e^{4(p-np'-i)N\pi^2/\xi}\right)
    \left(1-e^{4(p+np'+i)N\pi^2/\xi}\right)
  \right)
  \\
  &\times
  \frac{E_N\bigl((N+nN'+1/2)\gamma-p-np'\bigr)}
       {E_N\bigl((N-nN'-1/2)\gamma-p+np'+1\bigr)}
  \\
  &\times
  \frac{E_N\bigl((N-k-1/2)\gamma-p+np'+h+1\bigr)}
       {E_N\bigl((N+k+1/2)\gamma-p-np'-h\bigr)}
  \\
  =&
  \frac{1-e^{4pN\pi^2/\xi}}{1-e^{\xi}}
  \times
  \prod_{l=1}^{np'}\left(1-e^{4(p-l)N\pi^2/\xi}\right)\left(1-e^{4(p+l)N\pi^2/\xi}\right)
  \\
  &\times
  \prod_{i=1}^{h}
  \left(
    \left(1-e^{4(p-np'-i)N\pi^2/\xi}\right)
    \left(1-e^{4(p+np'+i)N\pi^2/\xi}\right)
  \right)
  \\
  &\times
  \frac{E_N\bigl((N-k-1/2)\gamma-p+np'+h+1\bigr)}
       {E_N\bigl((N+k+1/2)\gamma-p-np'-h\bigr)}
  \\
  =&
  \frac{1-e^{4pN\pi^2/\xi}}{1-e^{\xi}}
  \times
  \prod_{l=1}^{np'+h}
  \left(
    \left(1-e^{4(p-l)N\pi^2/\xi}\right)\left(1-e^{4(p+l)N\pi^2/\xi}\right)
  \right)
  \\
  &\times
  \frac{E_N\bigl((N-k-1/2)\gamma-p+np'+h+1\bigr)}
       {E_N\bigl((N+k+1/2)\gamma-p-np'-h\bigr)}
\end{align*}
where we use Lemma~\ref{lem:gamma/2} for $w=(N-nN')\gamma-p+np'$ and $w=(N+nN')\gamma-p-np'$ at the second equality.
Recalling that we choose $n$ and $h$ so that $nN'\le k<(n+1)N'$ and $hN'/p'<k-nN'<(h+1)N'/p'$, we see that $np'+h$ satisfies $(np'+h)N/p<k<(np'+h+1)N/p$.
So putting $m:=np'+h$ we see that if $mN/p<k<(m+1)N/p$, then the formula above coincides with \eqref{eq:qfac_E_N} where $(p,N)=1$.
\bibliography{mrabbrev,hitoshi}
\bibliographystyle{amsplain}
\end{document}